\newcommand{\meil}[1]{\mathbf{#1}} 
\newcommand{\latin}[1]{\emph{#1}}
\theoremstyle{plain}
\newtheorem{theorem}{Theorem}
\newtheorem{lemma}{Lemma}
\newtheorem{proposition}[lemma]{Proposition}
\theoremstyle{definition}
\newtheorem{proc}{Procedure}
\theoremstyle{remark}
\newtheorem{remark}{Remark}
\newcommand{\egaldef}{:=} 
\newcommand{\flens}{\mapsto} 
\newcommand{\flapp}{\mapsto} 
\newcommand{\telque}{\, \mbox{ s.t. } \,} 
\newcommand{\un}{\mathbf{1}}
\newcommand{\R}{\mathbb{R}} 
\newcommand{\N}{\mathbb{N}} 
\newcommand{\X}{\mathcal{X}}
\newcommand{\Y}{\mathcal{Y}}
\DeclareMathOperator{\card}{Card} 
\DeclareMathOperator{\diam}{diam} 
\newcommand{\mini}[2]{#1 \wedge #2}
\newcommand{\minipar}[2]{\mini{\left(#1\right)}{\left(#2\right)}}
\newcommand{\maxi}[2]{#1 \vee #2}
\newcommand{\maxipar}[2]{\maxi{\left(#1\right)}{\left(#2\right)}}
\newcommand{\nupl}{_{1 \ldots n}} 
\newcommand{\paren}[1]{\left( \left. #1 \right. \right)}
\newcommand{\parenb}[1]{\bigl( \left. #1 \right. \big)}
\newcommand{\parenB}[1]{\Bigl( \left. #1 \right. \Big)}
\newcommand{\croch}[1]{\left[ \left. #1 \right. \right]}
\newcommand{\crochB}[1]{\Bigl[\left. #1 \right.\Big]}
\newcommand{\crochb}[1]{\bigl[\left. #1 \right.\big]}
\newcommand{\crochbb}[1]{\biggl[\left. #1 \right.\bigg]}
\newcommand{\set}[1]{\left\{ \left. #1 \right. \right\}}
\newcommand{\setb}[1]{\bigl\{#1\big\}}
\newcommand{\absj}[1]{\left\lvert #1 \right\rvert} 
\providecommand{\norm}[1]{\left \lVert #1 \right\rVert}
\newcommand{\carre}[1]{\left(#1\right)^2}
\renewcommand{\P}{\mathbb{P}}
\newcommand{\Prob}{\mathbb{P}} 
\newcommand{\E}{\mathbb{E}} 
\DeclareMathOperator{\var}{var} 
\newcommand{\sachant}{\, \right| \left. \,} 
\newcommand{\loi}{\mathcal{D}} 
\DeclareMathOperator{\Leb}{Leb} 
\newcommand{\bayes}{s}
\newcommand{\perte}[1]{\ell \paren{\bayes , #1 }}
\newcommand{\ERM}{\widehat{s}}
\newcommand{\M}{\mathcal{M}}
\newcommand{\Mh}{\widehat{\mathcal{M}}} 
\newcommand{\mM}{m \in \M}
\newcommand{\mMh}{m \in \Mh}
\newcommand{\mh}{\widehat{m}}
\DeclareMathOperator{\pen}{pen}
\DeclareMathOperator{\crit}{crit}
\newcommand{\penid}{\pen_{\mathrm{id}}} 
\newcommand{\ph}{\widehat{p}} 
\newcommand{\penidglo}{\pen_{\mathrm{id,g}}} 
\newcommand{\penoptlin}{\pen_{\mathrm{opt,lin}}} 
\newcommand{\ERMb}[1][]{\widehat{s}^{W #1}} 
\newcommand{\Pnb}[1][]{{P_n^{W #1}}}
\newcommand{\Es}{{\E_W}}
\newcommand{\CWinf}{\ensuremath{C_{W}}}
\newcommand{\hyp}[1]{\ensuremath{\mathbf{(#1)}}} 
\newcommand{\hypAb}{\hyp{Ab}} 
\newcommand{\hypAn}{\hyp{An}} 
\newcommand{\hypAsig}{\hyp{A\sigma}} 
\newcommand{\hypAsigmax}{\hyp{A\sigmax}} 
\newcommand{\hypAsg}{\hyp{A_{\textrm{gauss}}}} 
\newcommand{\hypAp}{\hyp{Ap}} 
\newcommand{\hypAh}{\hyp{Ah}} 
\newcommand{\hypAlip}{\hyp{Al}} 
\newcommand{\hypAciblemax}{\hyp{A\bayes_{\max}}} 
\newcommand{\hypAd}{\hyp{Ad_{\ell}}} 
\newcommand{\hypArXl}{\hyp{Ar^{X}_{\ell}}} 
\newcommand{\hypArLu}{\hyp{Ar_{u}}} 
\newcommand{\hypArDu}{\hyp{Ar_{u}^{d}}} 
\newcommand{\hypAreg}{\hyp{Ar_{\ell,u}}} 
\newcommand{\hypPpoly}{\hyp{P1}} 
\newcommand{\hypPrich}{\hyp{P2}} 
\newcommand{\hypPech}{\hyp{P3}} 
\newcommand{\hypPconst}{\hyp{P4}} 
\newcommand{\hypPtout}{\hyp{P1-3}} 
\newcommand{\hypBg}{\hyp{Bg}} 
\newcommand{\hypUg}{\hyp{Ug}} 
\newcommand{\hypH}{\hyp{H}} 
\newcommand{\crXl}{\ensuremath{c_{\mathrm{r},\ell}^X}}
\newcommand{\crDu}{\ensuremath{c_{\mathrm{r},\mathrm{u}}^d}}
\newcommand{\crLl}{\ensuremath{c_{\mathrm{r},\ell}}}
\newcommand{\crLu}{\ensuremath{c_{\mathrm{r},\mathrm{u}}}}
\newcommand{\cbiasmaj}{C_{\mathrm{b}}^{+}} 
\newcommand{\cbiasmin}{C_{\mathrm{b}}^{-}} 
\newcommand{\aM}{\alpha_{\M}}
\newcommand{\cM}{c_{\M}}
\newcommand{\sigmin}{\sigma_{\min}} 
\newcommand{\sigmax}{\sigma_{\max}} 
\newcommand{\cgauss}{c_{\mathrm{gauss}}} 
\newcommand{\hypAml}{\hyp{A_{m,\ell}}} 
\newcommand{\hypAgeps}{\hyp{A_{g,\epsilon}}} 
\newcommand{\hypAdel}{\hyp{A\delta}} 
\newcommand{\hypQ}{\hyp{A_Q}} 
\newcommand{\cdelmglo}{c_{\Delta,m}^{g}}
\newcommand{\Qmp}{\ensuremath{Q_{m}^{(p)}}} 
\newcommand{\cQpm}{\ensuremath{c_Q^-}} 
\newcommand{\einv}[1]{e^+_{#1}}
\newcommand{\einvz}[1]{e^0_{#1}}
\newcommand{\delc}{\overline{\delta}} 
\newcommand{\punmin}{\widetilde{p_1}} 
\newcommand{\punzero}{\punmin^{(0)}} 
\newcommand{\El}{\E^{\Lambda_m}}
\newcommand{\Il}{I_{\lambda}}
\newcommand{\lamm}{\lambda \in \Lambda_m} 
\newcommand{\pl}{p_{\lambda}} 
\newcommand{\betl}{\beta_{\lambda}} 
\newcommand{\sigl}{\sigma_{\lambda}}
\newcommand{\sigld}{\sigma_{\lambda}^d} 
\newcommand{\sigla}{\sigma_{\lambda}^r} 
\newcommand{\phl}{\widehat{p}_{\lambda}} 
\newcommand{\bethl}{\widehat{\beta}_{\lambda}} 
\newcommand{\phlW}[1][]{\ensuremath{\widehat{p}^{W #1}_{\lambda}}} 
\newcommand{\bethlW}[1][]{\ensuremath{\widehat{\beta}^{W #1}_{\lambda}}} 
\newcommand{\Wl}{\widehat{W}_{\lambda}}
\newcommand{\mefr}{\ensuremath{M}} 
\newcommand{\mnefr}{\ensuremath{M_n}} 
\begin{document}
\begin{frontmatter}

\title{Model selection by resampling penalization}
\runtitle{Resampling penalization}

\begin{aug}
\author{\fnms{Sylvain} \snm{Arlot}\thanksref{t1}\ead[label=e1]{sylvain.arlot@ens.fr}}

\affiliation{CNRS, Willow Project-Team}

\address{Sylvain Arlot\\
CNRS; Willow Project-Team \\
Laboratoire d'Informatique de l'Ecole Normale Superieure\\
(CNRS/ENS/INRIA UMR 8548)\\
45, rue d'Ulm, 75230 Paris, France \\
\printead{e1}}
\thankstext{t1}{The author was financed in part by Univ Paris-Sud (Laboratoire de Mathematiques, CNRS-UMR 8628).}
\end{aug}

\runauthor{S. Arlot}

\begin{abstract}
In this paper, a new family of resampling-based penalization procedures
for model selection is defined in a general framework. It generalizes
several methods, including Efron's bootstrap penalization and the
leave-one-out penalization recently proposed by Arlot (2008), to any
exchangeable weighted bootstrap resampling scheme.
In the heteroscedastic regression framework, assuming the models to
have a particular structure, these resampling penalties are proved to
satisfy a non-asymptotic oracle inequality with leading constant close
to 1. In particular, they are asympotically optimal. Resampling
penalties are used for defining an estimator adapting simultaneously to
the smoothness of the regression function and to the heteroscedasticity
of the noise.
This is remarkable because resampling penalties are general-purpose
devices, which have not been built specifically to handle
heteroscedastic data. Hence, resampling penalties naturally adapt to
heteroscedasticity.
A simulation study shows that resampling penalties improve on $V$-fold
cross-validation in terms of final prediction error, in particular when
the signal-to-noise ratio is not large.
\end{abstract}

\begin{keyword}[class=AMS]
\kwd[Primary ]{62G09}
\kwd[; secondary ]{62G08}
\kwd{62M20}
\end{keyword}

\begin{keyword}
\kwd{Non-parametric statistics}
\kwd{resampling}
\kwd{exchangeable weighted bootstrap}
\kwd{model selection}
\kwd{penalization}
\kwd{non-parametric regression}
\kwd{adaptivity}
\kwd{heteroscedastic data}
\kwd{regressogram}
\kwd{histogram selection}
\end{keyword}
\end{frontmatter}

\tableofcontents

\clearpage

%
\section{Introduction} \label{sec.intro}
%
%
In the last decades, model selection has received much interest. When the final goal is prediction, model selection can be seen more generally as the question of choosing between the outcomes of several prediction algorithms. With such a general formulation, a natural and classical answer is the following. First, estimate the prediction error for each model or algorithm; second, select the model minimizing this criterion.
Model selection procedures mainly differ on the way of estimating the prediction error.

The empirical risk, also known as the apparent error or the resubstitution error, is a natural estimator of the prediction error.
Nevertheless, minimizing the empirical risk can fail dramatically: the empirical risk is strongly biased for models involving a number of parameters growing with the sample size because the same data are used for building predictors and for comparing them.

In order to correct this drawback, {\em cross-validation} methods have been introduced \cite{All:1974,Sto:1974}, relying on a data-splitting idea for estimating the prediction error with much less bias. In particular, $V$-fold cross-validation (VFCV, \cite{Gei:1975}) is a popular procedure in practice because it is both general and computationally tractable.
A large number of papers exist about the properties of cross-validation methods, showing that they are efficient for a suitable choice of the way data are split (or $V$ for VFCV).
Asymptotic optimality results for leave-one-out cross-validation (that is the $V=n$ case) in regression have been proved for instance by Li \cite{KCLi:1987} and by Shao \cite{Sha:1997}. However, when $V$ is fixed, VFCV can be asymptotically suboptimal, as showed by Arlot \cite{Arl:2008a}. We refer to the latter paper for more references on cross-validation methods, including the small amount of available non-asymptotic results.


%
Another way to correct the empirical risk for its bias is {\em penalization}.
In short, penalization selects the model minimizing the sum of the empirical risk and of some measure of complexity\footnote{Note that ``complexity'' here and in the following refers to the implicit modelization of a model or an algorithm, such as the number of estimated parameters. ``Complexity'' does not refer at all to the {\em computational} complexity of algorithms, which will always be called ``computational complexity'' in the following.} of the model (called penalty); see FPE \cite{Aka:1969}, AIC \cite{Aka:1973}, Mallows' $C_p$ or $C_L$ \cite{Mal:1973}.
Model selection can target two different goals.
On the one hand, a procedure is {\em efficient} (or asymptotically optimal) when its quadratic risk is asymptotically equivalent to the risk of the oracle.
On the other hand, a procedure is {\em model consistent} when it selects the smallest true model asymptotically with probability one.
This paper deals with {\em efficient} procedures, without assuming the existence of a true model.
Therefore, the {\em ideal penalty} for prediction is the difference between the prediction error (the ``true risk'') and the empirical risk; penalties should be data-dependent estimates of the ideal
penalty.\looseness=-1

Many penalties or complexity measures have been proposed. Consider for instance regression and least-squares estimators on finite-dimensional vector spaces (the models). When the design is fixed and the noise-level constant equal to $\sigma$, Mallows' $C_p$ penalty \cite{Mal:1973} is equal to $2 n^{-1} \sigma^2 D$ for a model of dimension $D$ and it can be modified according to the number of models \cite{Bir_Mas:2002,Sau:2006}. Mallows' $C_p$-like penalties satisfy some optimality properties \cite{Shi:1981,KCLi:1987,Bar:2002,Bir_Mas:2006} but they can fail when the data are heteroscedastic \cite{Arl:2008:shape} because these penalties are linear functions of the dimension of the models.

In the binary supervised classification framework, several penalties have been proposed.
First, VC-dimension-based penalties have the drawback of being independent of the underlying measure, so that they are adapted to the worst case.
Second, global Rademacher complexities \cite{Kol:2001,Bar_Bou_Lug:2002} (generalized by Fromont with resampling ideas \cite{Fro:2004}) take into account the distribution of the data, but they are still too large to achieve fast rates of estimation when the margin condition \cite{Mam_Tsy:1999} holds.
Third, local Rademacher complexities \cite{Bar_Bou_Men:2005,Kol:2006} are tighter estimates of the ideal penalty, but their computational cost is heavy and they involve huge (and sometimes unknown) constants.
Therefore, easy-to-compute penalties that can achieve fast rates are still needed.


%
All the above penalties have serious drawbacks making them less often used in practice than cross-validation methods: AIC and Mallows' $C_p$ rely on strong assumptions (such as homoscedasticity of the data and linearity of the models) and some mainly asymptotic arguments; VC-dimension-based penalties and global Rademacher complexities are far too pessimistic; local Rademacher complexities are computationally intractable, and their calibration is a serious issue.
Another approach for designing penalties in the general framework may not suffer from these drawbacks: the {\em resampling} idea.

Efron's resampling heuristics \cite{Efr:1979} was first stated for the bootstrap, then generalized to the exchangeable weighted bootstrap by Mason and Newton \cite{Mas_New:1992} and by Pr\ae stgaard and Wellner \cite{Pra_Wel:1993}.
In short, according to the resampling heuristics, the distribution of any function of the (unknown) distribution of the data and the sample can be estimated by drawing ``resamples'' from the initial sample.
In particular, the resampling heuristics can be used to estimate the variance of an estimator \cite{Efr:1979}, a prediction error \cite{Wu:1986,Efr_Tib:1997} or the ideal penalty (using the bootstrap \cite{Efr:1983,Efr:1986,Ish_Sak_Kit:1997}, the $\mefr$ out of $n$ bootstrap\footnote{Shao's goal in \cite{Sha:1996} was not efficiency but model consistency.} \cite{Sha:1996} or a $V$-fold subsampling scheme \cite{Arl:2008a}).
The asymptotic optimality of Efron's bootstrap penalty for selecting among maximum likelihood estimators has been proved by Shibata \cite{Shi:1997}.
Note also that global and local Rademacher complexities are using an i.i.d. Rademacher resampling scheme for estimating different upper bounds on the ideal penalty and Fromont's penalties \cite{Fro:2007} generalize the global Rademacher complexities to the exchangeable weighted bootstrap.


%
The first goal of this paper is to define and study {\em general-purpose penalties}, that is penalties well-defined in almost every framework and performing reasonably well in most of them, including regression and classification. The main interest of such penalties would be the ability to solve difficult problems (for instance heteroscedastic data, a non-smooth regression function or the fact that the oracle model achieves fast rates of estimation) {\em without knowing them in advance}. From the practical point of view, such a property is crucial.

To this aim, the resampling heuristics with the general exchangeable weighted bootstrap is used for estimating the ideal penalty (Section~\ref{sec.cadreRP}).
This defines a wide family of model selection procedures, called ``Resampling Penalization'' (RP), which includes Efron's and Shao's penalization methods \cite{Efr:1983,Sha:1996} as well as the leave-one-out penalization defined in \cite{Arl:2008a}. To our knowledge, it has never been proposed with such general resampling schemes, so that the RP family contains a wide range of new procedures.
Note that RP is well-defined in a general framework, including regression and classification, but also many other application fields (Section~\ref{RP.sec.discussion.classif}). Even if the main results are proved in the least-squares regression framework only, we obviously do not mean that RP should be restricted to this framework.

In this paper, the model selection efficiency of RP is studied with a {\em unified approach} for all the exchangeable resampling schemes.
Therefore, comparing bootstrap with subsampling is quite straightforward (Section~\ref{RP.sec.simus}) which is not common in the resampling literature (except a few asymptotic results, see Barbe and Bertail \cite{Bar_Ber:1995}).


The point of view used in the paper is {\em non-asymptotic}, which has two major implications.
First, non-asymptotic results allow to consider collections of models depending on the sample size $n$: in practice, it is usual to increase the number of explanatory variables with the number of observations.
Considering models with a large number of parameters (for instance of order $n^{\alpha}$ for some $\alpha >0$) is also particularly useful for designing adaptive estimators of a function which is only assumed to belong to some H\"olderian ball (see Section~\ref{RP.sec.main.holder}).
Thus, the non-asymptotic point of view allows not to assume that the regression function is described with a small number of parameters.

Second, several practical problems are ``non-asymptotic'' in the sense that the signal-to-noise ratio is low. As noticed in \cite{Arl:2008a}, with such data, VFCV can have serious drawbacks which can be naturally fixed by using the flexibility of penalization procedures.
It is worth noting that such a non-asymptotic approach is not common in the model selection literature and few non-asymptotic results exist on general resampling methods.

Another important point is that the framework of the paper includes several kinds of {\em heteroscedastic data}. The observations $(X_i,Y_i)_{1 \leq i \leq n}$ are only assumed to be i.i.d. with
\[ Y_i = \bayes(X_i) + \sigma(X_i) \epsilon_i , \]
where $\bayes: \X \flens \R$ is the (unknown) regression function, $\sigma: \X \flens \R$ is the (unknown) noise-level and $\epsilon_i$ has zero mean and unit variance conditionally on $X_i$. In particular, the noise-level $\sigma(X)$ can strongly depend on $X$ and the distribution of $\epsilon_i$ can depend on $X_i$.
Such data are generally considered as difficult to handle because no information on $\sigma$ is known, making irregularities of the signal difficult to distinguish from noise. As already mentioned, simple model selection procedures such as Mallows' $C_p$ can fail in this framework \cite{Arl:2008:shape} whereas it is natural to expect that resampling methods are robust to heteroscedasticity. In this article, both theoretical and simulation results confirm this fact (Sections~\ref{RP.sec.main} and~\ref{RP.sec.simus}).


The two main results of the paper are stated in Section~\ref{RP.sec.main}.
First, making mild assumptions on the distribution of the data, a non-asymptotic oracle inequality for RP is proved with leading constant close to~1 (Theorem~\ref{RP.the.oracle_traj_non-as}). It holds for several kinds of resampling schemes (including bootstrap, leave-one-out, half-subsampling and i.i.d. Rademacher weighted bootstrap) and implies the asymptotic optimality of RP, even when the data are highly heteroscedastic.
For proving such a result, each model is assumed to be the vector space of piecewise constant functions (histograms) on some partition of the feature space.
This is indeed a restriction, but we conjecture that it is mainly technical and that RP remains efficient in a much more general framework (see Section~\ref{RP.sec.discussion.classif}).
Moreover, studying extensively the toy model of histograms allows to derive precise heuristics for the general framework. A major goal of the paper is to help practicioners who would like to know how to use resampling for performing model selection (see in particular Sections~\ref{RP.sec.pratique} and~\ref{RP.sec.discussion.conclu}).

Second, RP is used to build an estimator simultaneously adaptive to the smoothness of the regression function (assuming that $\bayes$ is $\alpha$-H\"olderian for some unknown $\alpha \in (0,1]$) and to the unknown noise-level $\sigma\paren{\cdot}$ (Theorem~\ref{RP.the.holder}). This result may seem surprising since RP has never been designed specifically for such a purpose. We interpretate Theorem~\ref{RP.the.holder} as a confirmation that RP is {\em naturally adaptive} and should work well in several other difficult frameworks.

Several results similar to Theorem~\ref{RP.the.oracle_traj_non-as} exist in the literature for other procedures such as Mallows' $C_p$ (with homoscedastic data only), VFCV and leave-one-out cross-validation. Moreover, there exist several minimax adaptive estimators for heteroscedastic data with a smooth noise-level, for instance \cite{Efr_Pin:1996,Gal_Per:2008}, and the regression function and the noise level can be estimated simultaneously \cite{Gen:2008}.
In comparison, the interest of RP is both its {\em generality} (contrary to Mallows' $C_p$ and specific adaptive estimators) and its {\em flexibility} (contrary to VFCV, see \cite{Arl:2008a}), as detailed in Section~\ref{RP.sec.comparaison}.

A simulation study is conducted in Section~\ref{RP.sec.simus} with small sample sizes. RP is showed to be competitive with Mallows' $C_p$ for ``easy'' problems, and much better for some harder ones (for instance with a variable noise-level). Moreover, a well-calibrated RP yields almost always better model selection performance than VFCV.
Therefore, RP can be of great interest in situations where no \latin{a priori} information is known about the data. RP can deal with difficult problems, and compete with procedures that are fitted for easier problems.
In short, RP is an efficient alternative to VFCV.


This article is organized as follows.
The framework and the Resampling Penalization (RP) family of procedures are defined in Section~\ref{sec.cadreRP}.
The main results are stated in Section~\ref{RP.sec.main}.
The differences between the resampling weights are investigated in Section~\ref{RP.sec.calc.delta}.
Then, a simulation study is presented in Section~\ref{RP.sec.simus}.
Practical issues concerning the implementation of RP are considered in Section~\ref{RP.sec.pratique}.
RP is compared to other penalization methods in Section~\ref{RP.sec.comparaison} and the extension of RP to the general framework is discussed in Section~\ref{RP.sec.discussion.classif}.
Finally, Section~\ref{RP.sec.preuves} is devoted to the proofs. Some additional material (other simulation experiments and proofs) is available in a technical Appendix \cite{Arl:2008b:app}.

\section{The Resampling Penalization procedure} \label{sec.cadreRP}
In order to simplify the presentation, we choose to focus on the particular framework of least-squares regression on models of piecewise constant functions (histograms), which is the framework of the main results of Section~\ref{RP.sec.main} and the simulation study of Section~\ref{RP.sec.simus}.

Nevertheless, the RP family is a {\em general-purpose method} which can easily be defined in the general prediction framework. The main interest of the histogram framework is to provide general heuristics about RP, so that the practicioner can make the best possible use of RP in the general framework. A discussion on RP in the general prediction framework is provided in Section~\ref{RP.sec.discussion.classif}, including a general definition of RP.

\subsection{Framework} \label{sec.cadreRP.cadre}
%
%
Suppose we observe some data $(X_1,Y_1), \ldots, (X_n,Y_n) \in \X \times \R$, independent with common distribution $P$, where the feature space $\X$ is typically a compact set of $\R^k$.
Let $\bayes$ denote the regression function, that is $\bayes(x) =\E\croch{Y \sachant X = x}$. Then,
\begin{equation} \label{RP.eq.donnees.reg} Y_i = \bayes(X_i) + \sigma(X_i) \epsilon_i
\end{equation}
where $\sigma: \X \mapsto \R$ is the heteroscedastic noise-level and $\epsilon_i$ are i.i.d. centered noise terms; the $\epsilon_i$ possibly depend on $X_i$, but they are have zero mean and unit variance conditionally on $X_i$.

The goal is to predict $Y$ given $X$ where $(X,Y) \sim P$ is independent of the data.
The quality of a predictor $t: \X \mapsto \R$ is measured by the quadratic prediction loss
$ P \gamma(t) \egaldef \E_{(X,Y)} \croch{ \gamma(t,(X,Y)) }$, where $(X,Y) \sim P$ and $\gamma(t,(x,y)) \egaldef \paren{ t(x) - y }^2$ is the least-squares contrast.
Since $P \gamma(t)$ is minimal when $t = \bayes$, the excess loss is defined as
\[ \perte{t} \egaldef P\gamma\paren{t} - P\gamma\paren{\bayes} = \E_{(X,Y)} \carre{t(X)-\bayes(X)} . \]
Given a particular set of predictors $S_m$ (called a {\em model}), the best predictor over $S_m$ is defined as \[ \bayes_m \egaldef \arg\min_{t \in S_m} \set{ P \gamma(t) } , \]
with its empirical counterpart
\[ \ERM_m \egaldef \arg\min_{t \in S_m} \set{ P_n \gamma(t) } \] (when it exists and is unique) where $P_n = n^{-1} \sum_{i=1}^n \delta_{(X_i,Y_i)}$ is the empirical distribution.
The estimator $\ERM_m$ is the well-known
{\em empirical risk minimizer}, also called least-squares estimator since $\gamma$ is the least-squares contrast.



In this article, we mainly consider histogram models $S_m$, that is of the following form.
Let $\paren{\Il}_{\lamm}$ be some fixed partition of $\X$. Then, $S_m$ denotes the set of functions $\X \flens \R$ which are constant over $\Il$ for every $\lamm$; $S_m$ is a vector space of dimension $D_m = \card(\Lambda_m)$, spanned by the family $(\un_{\Il})_{\lamm}$.
The empirical risk minimizer $\ERM_m$ over an histogram model $S_m$ is often called a {\em regressogram}.

Explicit computations are easier with regressograms because $(\un_{\Il})_{\lamm}$ is an orthogonal basis of $L^2(\mu)$ for any probability measure $\mu$ on $\X$.
In particular,
\begin{gather*}
\bayes_m = \sum_{\lamm} \betl \un_{\Il} \quad \mbox{and} \quad \ERM_m = \sum_{\lamm} \bethl \un_{\Il} ,\\
\mbox{where } \enspace
\betl \egaldef \E_P \croch{ Y \sachant X \in \Il } , \enspace\enspace \bethl \egaldef \frac{1}{n\phl} \sum_{X_i \in \Il} Y_i
\enspace\mbox{ and }\enspace \phl \egaldef P_n(X \in \Il) .
\end{gather*}
Note that $\ERM_m$ is uniquely defined if and only if each $\Il$ contains at least one of the $X_i$, that is $\min_{\lamm} \phl >0$.



Let us assume that a collection of models $(S_m)_{\mM_n}$ is given.
Model selection consists in selecting some data-dependent $\mh \in \M_n$ such that $\perte{\ERM_{\mh}}$ is as small as possible.
General penalization procedures can be described as follows.
Let $\pen: \M_n \flapp \R^+$ be some penalty function, possibly data-dependent, and define
\begin{equation} \label{def.mh.pen}
\mh \in \arg\min_{\mM_n} \set{ P_n \gamma\paren{\ERM_m} + \pen(m) } .
\end{equation}
Since the goal is to minimize the loss $P \gamma\paren{\ERM_m}$, the {\em ideal penalty} is
\begin{equation} \label{RP.def.penid}
 \penid(m) \egaldef (P-P_n) \gamma\paren{\ERM_m} , \end{equation}
and we would like $\pen(m)$ to be as close to $\penid(m)$ as possible for every $\mM_n$.
In the histogram framework, note that $\ERM_m$ is not uniquely defined when $\min_{\lamm} \phl = 0$;
then, we consider that the model $S_m$ cannot be chosen, which is formally equivalent to add $+\infty \un_{\min_{\lamm} \phl = 0}$ to the penalty $\pen(m)$.

When $S_m$ is the histogram model associated with some partition $\paren{\Il}_{\lamm}$ of $\X$, the ideal penalty \eqref{RP.def.penid} can be computed explicitly:
\begin{align} \notag
\penid(m) &= (P-P_n) \gamma(\bayes_m)
+ P \paren{ \gamma\paren{\ERM_m} - \gamma\paren{\bayes_m}} + P_n \paren{ \gamma\paren{\bayes_m} - \gamma\paren{\ERM_m}} \\ \label{eq.penid.histos}
&= (P-P_n) \gamma(\bayes_m) + \sum_{\lamm}
\croch{ \pl \parenb{ \bethl - \betl }^2 + \phl \parenb{ \bethl - \betl }^2}
\end{align}
where $\pl \egaldef \Prob\paren{ X \in \Il}$.
The ideal penalty $\penid(m)$ is unknown because it depends on the true distribution $P$; therefore, resampling is a natural method for estimating $\penid(m)$.

\subsection{The resampling heuristics} \label{sec.cadreRP.heur} 

Let us recall briefly the {\em resampling heuristics}, which has been introduced by Efron \cite{Efr:1979} in the context of variance estimation.
Basically, it says that one can mimic the relationship between $P$ and $P_n$ by drawing a $n$-sample with common distribution $P_n$, called the ``resample''; let $\Pnb$ denote the empirical distribution of the resample.
Then, the conditional distribution of the pair $(P_n,\Pnb)$ given $P_n$ should be close to the distribution of the pair $(P,P_n)$.
Hence, the expectation of any quantity of the form $F(P,P_n)$ can be estimated by $\Es \croch{ F(P_n, \Pnb) }$. The expectation $\Es\croch{\cdot}$ means that we integrate with respect to the resampling randomness only. Let us emphasize that $\penid(m)$ has the form $F(P,P_n)$.

Later on, this heuristics has been generalized to other resampling schemes, with the exchangeable weighted bootstrap \cite{Mas_New:1992,Pra_Wel:1993}. The empirical distribution of the resample then has the general form
\[ \Pnb \egaldef \frac{1}{n} \sum_{i=1}^n W_i \delta_{(X_i,Y_i)} \] where $W \in \R^n$ is an exchangeable\footnote{$W$ is said to be {\em exchangeable} when its distribution is invariant by any permutation of its coordinates.} weight vector independent of the data and such that $\forall i, \, \E_W\croch{W_i}=1$.
In this article, $W$ is also assumed to satisfy $\forall i$, $W_i \geq 0$ a.s. and $\E_W[W_i^2]<\infty$.


We mainly consider the following weights, which include the more classical resampling schemes:
\begin{enumerate}
\item {\em Efron} ($\mefr$), $\mefr \in \N \backslash \{0 \}$ (Efr): $((\mefr /n) W_i)_{1 \leq i \leq n}$ is a multinomial vector with parameters $(\mefr ;n^{-1},\ldots, n^{-1})$. A classical choice is $\mefr=n$.
\item {\em Rademacher} ($p$), $p \in (0;1)$ (Rad): $(p W_i)$ are independent, with a Bernoulli ($p$) distribution. A classical choice is $p=1/2$.
\item {\em Poisson} ($\mu$), $\mu \in (0,\infty)$ (Poi): $(\mu W_i)$ are independent, with a Poisson ($\mu$) distribution. A classical choice is $\mu=1$.
\item {\em Random hold-out} ($q$), $q \in \{1, \ldots, n\}$ (Rho): $W_i = (n/q) \un_{i \in I}$ where $I$ is a uniform random subset of cardinality $q$ of $\{1, \ldots, n\}$. A classical choice is $q=n/2$.
\item {\em Leave-one-out} (Loo) = Rho ($n-1$).
\end{enumerate}
In the following, Efr, Rad, Poi, Rho and Loo respectively denote the above resampling weight vector distributions with the ``classical'' value of the parameter.

\begin{remark}
The above terminology explicitly links the weight vector distributions with some classical resampling schemes. See \cite{Mas_New:1992,Hal_Mam:1994,vdV_Wel:1996} for more details about classical resampling weight names, as well as other classical examples.
\begin{itemize}
\item The name ``Efron'' comes from the classical choice $\mefr=n$ for which Efron weights actually are the bootstrap weights. When $\mefr < n$, Efron($\mefr$) is the $\mefr$ out of $n$ bootstrap, used for instance by Shao \cite{Sha:1996}.
\item The name ``Rademacher'' for the i.i.d. Bernoulli weights comes from the classical choice $p=1/2$ for which $(W_i-1)_i$ are i.i.d. Rademacher random variables.
For instance, global and local Rademacher complexities use this resampling scheme to estimate different upper bounds on $\penid(m)$ (see Section~\ref{RP.sec.discussion.classif.classif}).
\item Poisson weights are often used as approximations to Efron weights, via the so-called ``Poissonization'' technique (see \cite[Chapter~3.5]{vdV_Wel:1996} and \cite{Fro:2004}). They are known to be efficient for estimating several non-smooth functionals (see \cite[Chapter~3]{Bar_Ber:1995} and \cite[Section~1.4]{Mam:1992}).
\item The Random hold-out ($q$) weights can also be called ``delete-$(n-q)$ jackknife'', as well as the Leave-one-out weights also refer to the jackknife (sometimes called cross-validation). They are both resampling schemes without replacement \cite[Example~3.6.14]{vdV_Wel:1996}, more often called {\em subsampling weights} (see for instance the book by Politis, Romano and Wolf \cite{Pol_Rom_Wol:1999} on subsampling). They are close to the idea of splitting the data into a training set and a validation set (for instance, leave-one-out, hold-out and cross-validation). Indeed, if one defines the training set as \[ \set{ (X_i,Y_i) \telque W_i \neq 0 } \] and the validation set as its complement, there is a one-to-one correspondence between subsampling weights and data splitting.
\end{itemize}
\end{remark}

\subsection{Resampling Penalization} \label{sec.cadreRP.RP}

Applying directly the resampling heuristics of Section~\ref{sec.cadreRP.heur} for estimating the ideal penalty \eqref{RP.def.penid}, we would get the penalty
\begin{gather} \label{RP.def.penreech}
\Es \croch{ P_n \gamma \paren{ \ERMb_m } - \Pnb \gamma \paren{\ERMb_m } } , \\
\notag \mbox{where} \quad \ERMb_m \egaldef \arg\min_{t \in S_m} \Pnb \gamma(t) = \sum_{\lamm} \bethlW \un_{\Il} , \quad \bethlW \egaldef \frac{1}{n\phlW} \sum_{X_i \in \Il} W_i Y_i ,\\
\notag \phlW \egaldef \Pnb(X \in \Il) = \phl \Wl \quad \mbox{and} \quad \Wl \egaldef \frac{1}{n \phl} \sum_{X_i \in \Il} W_i .
\end{gather}
Two problems have to be solved before defining properly the Resampling Penalization procedure. Here, we focus on the histogram framework; the general framework will be considered in Section~\ref{RP.sec.discussion.classif}.

First, \eqref{RP.def.penreech} is not well-defined because $\ERMb_m$ is not unique if $\min_{\lamm} \phlW = 0$.
Hence, even when $\min_{\lamm} \phl >0$, the problem occurs as soon as $\Wl=0$ for some $\lamm$, which has a positive probability (except when $D_m=1$) for most of the resampling schemes since $\Prob_W \paren{ \forall i \geq 2, \, W_i = 0 }>0$.
In order to make \eqref{RP.def.penreech} well-defined, let us rewrite the resampling penalty as the resampling estimate of \eqref{eq.penid.histos}, that is \[ \Es \croch{ P_n \gamma \paren{ \ERMb_m } - \Pnb \gamma \paren{\ERMb_m } } = \ph_0(m) + \ph_1(m) + \ph_2(m) \] where
\[ \ph_0(m) \egaldef \Es\croch{ (P_n-\Pnb) \gamma(\ERM_m)} = \frac{1}{n} \sum_{i=1}^n \paren{\Es\croch{1-W_i} \gamma\paren{\ERM_m ; (X_i,Y_i)}} = 0 \]
because $\Es[W_i]=1$ for every $i$,
\begin{align*}
\ph_1(m) &\egaldef \sum_{\lamm} \paren{ \Es\paren{ \phl \paren{ \bethlW - \bethl }^2}} \\
\mbox{and} \quad \ph_2(m) &\egaldef \sum_{\lamm} \paren{ \Es\croch{\phlW \paren{ \bethlW - \bethl }^2}} .
\end{align*}
With the convention $\phlW ( \bethlW - \bethl )^2 =0$ when $\phlW=0$, $\ph_2(m)$ is well-defined since $\bethlW$ is well-defined when $\phlW>0$.
It remains to define properly $\ph_1(m)$.
We suggest to replace the expectation over all the resampling weights by an expectation conditional on $\Wl>0$, {\em separately for each $\mM_n$ and $\lamm$}, which ensures that we only remove a small proportion of the possible resampling weights.
To summarize, \eqref{RP.def.penreech} is replaced by
\begin{equation} \label{RP.def.pen.his.tmp}
\sum_{\lamm} \paren{ \Es \croch{ \phl \paren{\bethlW - \bethl}^2 \sachant \Wl >0 } + \Es \croch{ \phlW \paren{\bethlW - \bethl}^2 }} .
\end{equation}

Second, \eqref{RP.def.pen.his.tmp} is strongly biased as an estimate of $\penid$ when $\var(W_1)$ is small, because $\Pnb$ is then much closer to $P_n$ than $P_n$ is close to $P$.
Assuming the $S_m$ to be histogram models, we will prove in Section~\ref{RP.sec.histos.exp} (see Propositions~\ref{RP.pro.EpenRP-Epenid} and~\ref{RP.pro.comp.Epen.Ep2}) that the bias can be corrected by multiplying \eqref{RP.def.pen.his.tmp} by a constant \CWinf\ which only depends on the distribution of $W$. The values of \CWinf\ for the classical weights are reported in Table~\ref{RP.TableCWinfty}.
Remark that $\CWinf=1$ in the bootstrap case (Efr), as well as for Rad, Poi and Rho.
%
\begin{table}
\caption{\CWinf\ for several resampling schemes (see Section~\ref{RP.sec.histos.exp})\label{RP.TableCWinfty}}
   \begin{tabular}{|r|c|c|c|c|c|} \hline
   $\loi(W)$ & Efr($\mefr$) & Rad($p$) & Poi($\mu$) & Rho($q$) & Loo \\ \hline
   \CWinf & $\mefr /n$ & $p/(1-p)$ & $\mu$ & $q/(n-q)$ & $n-1$ \\ \hline
   \end{tabular}
\end{table}


We are now in position to define properly the Resampling Penalization (RP) procedure for selecting among histogram models. See Section~\ref{RP.sec.discussion.classif} for the definition of RP in the general framework (Procedure~\ref{RP.def.proc.gal}).

\begin{proc}[Resampling Penalization for histograms] \label{RP.def.proc.his} \hfill
\begin{enumerate}
\item Replace $\M_n$ by
\begin{equation*}
\Mh_n = \Big\{ \mM_n \telque \min_{\lamm} \{ n\phl \} \geq 3 \Big\} .
\end{equation*}
\item Choose a resampling scheme $\loi(W)$.
\item Choose a constant $C \geq \CWinf$ where $\CWinf$ is defined in Table~\ref{RP.TableCWinfty}.
\item Define, for each $\mMh_n$, the resampling penalty $\pen(m)$ as
{ 
\begin{equation} \label{RP.def.pen.his}
C \sum_{\lamm} \paren{ \Es \croch{ \phl \paren{\bethlW - \bethl}^2 \sachant \Wl >0 } + \Es \croch{ \phlW \paren{\bethlW - \bethl}^2 }} .
\end{equation} }
\item Select $\mh \in \arg\min_{\mMh_n} \set{P_n\gamma\paren{\ERM_m} + \pen(m)}$.
\end{enumerate}
\end{proc}

\begin{remark}
\begin{enumerate}
  \item At step 1, we remove more models than those for which $\ERM_m$ is not uniquely defined.
When $n \phl = 1$ for some $\lamm$, estimating the quality of estimation of $\bethl$ with only one data-point is hopeless with no assumption on the noise-level $\sigma$.
The reason why we remove also models for which $\min_{\lamm} \set{n \phl} =2$ is that the oracle inequalities of Section~\ref{RP.sec.main} require it for some of the weights; nevertheless, such models generally have a poor prediction performance, so that step 1 is reasonable.
\item At step 3, $C$ can be larger than $\CWinf$ because overpenalizing can be fruitful from the non-asymptotic point of view, in particular when the sample size $n$ is small or the noise level $\sigma$ is large. The simulation study of Section~\ref{RP.sec.simus} provides experimental evidence for this fact (see also Section~\ref{RP.sec.pratique.const.overpen}).
\item RP (Procedure~\ref{RP.def.proc.his}) generalizes several model selection procedures.
With a bootstrap resampling scheme (Efr) and $C=1$, RP is Efron's bootstrap penalization \cite{Efr:1983}, which has also been called EIC in the log-likelihood framework \cite{Ish_Sak_Kit:1997}.
With an $\mefr$ out of $n$ bootstrap resampling scheme (Efr($\mefr$)) and $C=1$, RP has been proposed and studied by Shao \cite{Sha:1996} in the context of model identification. Note that $\CWinf \neq 1$ for Efr($\mefr$) weights if $\mefr \neq n$; this crucial point will be discussed in Section~\ref{RP.sec.histos.exp}.
RP with a (non-exchangeable) $V$-fold subsampling scheme has also been proposed recently in \cite{Arl:2008a}.
\item When $W$ are the ``leave-one-out'' weights, RP is not the classical leave-one-out model selection procedure. Nevertheless, according to \cite{Arl:2008a}, when $C=n-1$, it is identical to Burman's $n$-fold corrected cross-validation \cite{Bur:1989}, hence close to the uncorrected one.
\end{enumerate}
\end{remark}

\section{Main results} \label{RP.sec.main}
In this section, we state some non-asymptotic properties of Resampling Penalization (Procedure~\ref{RP.def.proc.his}) for model selection.
First, Theorem~\ref{RP.the.oracle_traj_non-as} is an oracle inequality with leading constant close to 1.
In particular, Theorem~\ref{RP.the.oracle_traj_non-as} implies the asymptotic optimality of RP.
Second, Theorem~\ref{RP.the.holder} is an adaptivity result for an estimator built upon RP, when the regression function belongs to some H\"olderian ball.
A remarkable point is that both results remain valid under mild assumptions on the distribution of the noise, which can be non-Gaussian and highly heteroscedastic.

Throughout this section, we assume the existence of non-negative constants $\aM$, $\cM$, $c_{\mathrm{rich}}$ such that:
\begin{enumerate}
\item[\hypPpoly] Polynomial size of $\M_n$: $\card(\M_n) \leq \cM n^{\aM}$.
\item[\hypPrich] Richness of $\M_n$: $\exists m_0 \in \M_n$ s.t. $D_{m_0} \in \croch{ \sqrt{n}; c_{\mathrm{rich}} \sqrt{n} }$.
\item[\hypPech] The weight vector $W$ is chosen among Efr, Rad, Poi, Rho and Loo (defined in Section~\ref{sec.cadreRP.heur}, with the classical value of their parameter).
\end{enumerate}
\hypPpoly\ is a natural restriction since RP plugs an estimator of the ideal penalty into \eqref{def.mh.pen}.
When $\card(\M_n)$ is larger, say proportional to $e^{a n}$ for some $a >0$, Birg\'e and Massart \cite{Bir_Mas:2006} proved that penalties estimating the ideal penalty cannot be asymptotically optimal.
\hypPrich\ is merely technical.
\hypPech\ can be relaxed, as explained in Section~\ref{sec.Thm1.autresW}.

\subsection{Oracle inequality} \label{RP.sec.main.oracle}

\makeatletter
\setlength\leftmargini   {33\p@}
\makeatother

\begin{theorem} \label{RP.the.oracle_traj_non-as}
Assume that the data $(X_i,Y_i)_{1 \leq i \leq n}$ satisfy the following:
\begin{enumerate}
\item[\hypAb] Bounded data: $\norm{Y_i}_{\infty} \leq A < \infty$.
\item[\hypAn] Noise-level bounded from below: $\sigma(X_i) \geq \sigmin>0$ a.s.
\item[\hypAp] Polynomially decreasing bias: there exist $\beta_1\geq\beta_2>0$ and $\cbiasmaj,\cbiasmin>0$ such that \[ \forall\mM_n, \quad \cbiasmin D_m^{-\beta_1} \leq \perte{\bayes_m} \leq \cbiasmaj D_m^{-\beta_2} . \]
\item[\hypArXl] Lower regularity of the partitions for $\loi(X)$: there exists $\crXl>0$ such that \[ \forall\mM_n, \quad D_m \min_{\lamm} \pl \geq \crXl . \]
\end{enumerate}

Let $\mh$ be defined by Procedure~\ref{RP.def.proc.his} \textup{(}under restrictions \hypPtout, with $C=\CWinf$\textup{)}. Then, there exist a constant $K_1>0$ and an absolute sequence $\varepsilon_n$ converging to zero at infinity such that, with probability at least $1 - K_{1} n^{-2}$,
\begin{equation} \label{RP.eq.oracle_traj_non-as}
\perte{\ERM_{\mh}} \leq \paren{ 1 + \varepsilon_n } \inf_{\mM_n} \set{ \perte{\ERM_m} } .\end{equation}

Moreover,
\begin{equation} \label{RP.eq.oracle_class_non-as}
\E \croch{\perte{\ERM_{\mh}}} \leq \paren{ 1 + \varepsilon_n } \E \crochB{ \inf_{\mM_n} \set{ \perte{\ERM_m} } } + \frac{A^2 K_{1}}{n^2} .
\end{equation}

The constant $K_1$ may depend on constants in \hypAb, \hypAn, \hypAp, \hypArXl\ and \hypPtout\ but not on $n$. The term $\varepsilon_n$ is smaller than $\paren{\ln(n)}^{-1/5}$; $\varepsilon_n$ can also be made smaller than $n^{-\delta}$ for any $0 < \delta < \delta_0 (\beta_1, \beta_2)$ at the price of enlarging $K_1$.
\end{theorem}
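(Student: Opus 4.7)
The strategy is the classical ``penalty $\approx$ ideal penalty'' argument for Mallows-type oracle inequalities, adapted to the rather intricate definition of the RP penalty. For each $m\in\Mh_n$, we want
\[
\pen(m) = \penid(m) + r_n(m) , \qquad |r_n(m)| \leq \varepsilon_n \cdot \E[\penid(m)] ,
\]
on an event of probability $\geq 1 - K_1 n^{-2}$ and simultaneously for all $m$. Once this is obtained, plugging it into the definition of $\mh$ and rearranging gives \eqref{RP.eq.oracle_traj_non-as}. The key quantitative ingredient behind the last inequality is that, under \hypAn\ and \hypArXl, the variance part of $\perte{\ERM_m}$ is of order $D_m/n$, hence of the same order as $\E[\penid(m)]$, which makes an $\varepsilon_n$--relative error on the penalty translate into an $\varepsilon_n$--relative error on the loss.

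\textbf{Step 1: expectation of $\pen(m)$.} I would first use the expectation computations referred to in Propositions~\ref{RP.pro.EpenRP-Epenid} and~\ref{RP.pro.comp.Epen.Ep2}, which were designed precisely so that the multiplicative constant $\CWinf$ of Table~\ref{RP.TableCWinfty} cancels the bias introduced by the conditioning $\Wl>0$ and by the reduced dispersion of $\Pnb$ around $P_n$. This gives
\[
\E[\pen(m)] = \E\Bigl[\sum_{\lamm} \bigl(\pl + \phl\bigr)\parenb{\bethl - \betl}^2\Bigr] + \rho_n(m),
\]
with a remainder $\rho_n(m)$ that is $o(1)$ relative to the main term, thanks to \hypAb, \hypAn, \hypArXl\ and the constraint $n\phl\geq 3$ imposed by $\Mh_n$. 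Comparing with \eqref{eq.penid.histos} and noting that $\E[(P-P_n)\gamma(\bayes_m)]=0$, we get $\E[\pen(m)] = \E[\penid(m)] + \rho_n(m)$.

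\textbf{Step 2: concentration.} The three objects to concentrate are $\penid(m) - \E\penid(m)$, $\pen(m) - \E\pen(m)$, and $(P-P_n)\gamma(\bayes_m)$. The first two are sums of the form $\sum_{\lamm} \phl(\bethl - \betl)^2$ (weighted), which can be rewritten as $U$--statistics or chi-square type quadratic forms in the centred residuals $Y_i - \betl$. Under \hypAb\ and \hypAn\ these are sub-exponential with explicit Bernstein constants, so I would apply a Bernstein inequality on each bin, sum via the Cauchy--Schwarz/variance bookkeeping, and finally use the polynomial tail given by \hypAb\ to absorb the binomial fluctuations of the $n\phl$ (which by \hypArXl\ are concentrated around $n\pl \gtrsim n/D_m$). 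The third term is handled by Bernstein since $\gamma(\bayes_m)$ is bounded. All concentration events hold with probability $\geq 1 - K n^{-2-\aM}$ on a single $m$; by \hypPpoly, a union bound over $\M_n$ yields a single good event of probability $\geq 1 - K_1 n^{-2}$ on which the approximation $\pen(m) \approx \penid(m)$ holds uniformly, with a relative error $\varepsilon_n$ that one can track explicitly as $(\ln n)^{-1/5}$ (the exponent $1/5$ ultimately comes from optimizing the trade-off between the sub-exponential deviation bound, the binomial control of $\phl$, and the polynomial price of the union bound).

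\textbf{Step 3: lower bound on $\perte{\ERM_m}$ and conclusion.} Using the decomposition $\perte{\ERM_m} = \perte{\bayes_m} + \sum_{\lamm} \pl(\bethl-\betl)^2$ together with \hypAp\ (lower bound on bias) and a Bernstein/variance lower bound of order $D_m/n$ on the variance term (this is where \hypAn\ is really used), one shows $\perte{\ERM_m} \geq c \max\{D_m^{-\beta_1}, D_m/n\}$, and hence $\perte{\ERM_m} \geq c' \E[\penid(m)]$ up to constants. This is what turns the additive error $\pen(m) - \penid(m) = O(\varepsilon_n \E\penid(m))$ into the multiplicative $(1+\varepsilon_n)$ factor in \eqref{RP.eq.oracle_traj_non-as}. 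The expectation version \eqref{RP.eq.oracle_class_non-as} is obtained by observing that on the complement of the good event the excess loss is trivially bounded by $\|Y\|_\infty^2 \leq A^2$, giving the additional $A^2 K_1/n^2$ term. Finally, \hypPrich\ guarantees that the oracle dimension is bounded by $c_{\mathrm{rich}}\sqrt{n}$, so the restriction $\mh\in\Mh_n$ (i.e.\ $n\phl\geq 3$) costs nothing at the optimum with high probability, thanks again to \hypArXl.

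\textbf{Main obstacle.} The delicate point is not the concentration step \emph{per se}, but obtaining, with explicit constants and in a completely non-asymptotic fashion, the very tight match $\E[\pen(m)] = \E[\penid(m)] + o(\E\penid(m))$ uniformly in $m$. The difficulty stems from the conditional expectation $\Es[\,\cdot\sachant \Wl>0\,]$, which depends on the distribution of $W$ in a non-trivial way, and from the fact that the small bins ($n\phl$ of order $3$) behave differently from the bulk. Showing that $\CWinf$ is exactly the right multiplicative correction for \emph{all five} weight schemes at once (Efr, Rad, Poi, Rho, Loo) is what Propositions~\ref{RP.pro.EpenRP-Epenid}--\ref{RP.pro.comp.Epen.Ep2} are designed to do, and invoking them is the cornerstone of the whole argument; the rate $\varepsilon_n = (\ln n)^{-1/5}$ in the final bound will be dictated by the slowest of the concentration / bias tradeoffs arising in this uniform control.
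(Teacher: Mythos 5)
Your proposal captures the correct high-level plan---compare $\pen(m)$ with $\penid(m)$ in expectation via Propositions~\ref{RP.pro.EpenRP-Epenid}--\ref{RP.pro.comp.Epen.Ep2}, then concentrate each piece, then convert additive errors into a multiplicative oracle inequality---and this is indeed broadly the strategy the paper follows. However, there is a genuine gap in Steps~2--3: you assert a uniform bound $|\pen(m) - \penid(m)| \leq \varepsilon_n\,\E[\penid(m)]$ over \emph{all} $m\in\Mh_n$ with $\varepsilon_n\to 0$, and this is simply not achievable. The concentration inequalities for $\penid(m)$ and $\pen(m)$ (Propositions~\ref{RP.pro.conc.penid} and~\ref{RP.pro.conc.penRP}) have relative error of order $D_m^{-1/2}$ (up to logarithmic factors), which does not vanish for models of small or bounded dimension. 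So the blanket ``uniform concentration with relative error $\varepsilon_n$'' in your Step~2 fails for $D_m \lesssim (\ln n)^{\gamma_1}$, and your Step~3 cannot proceed as written.

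The paper's actual proof of Lemma~\ref{RP.le.gal} (which implies the theorem) circumvents this precisely by restricting attention to models of \emph{intermediate} dimension, $(\ln n)^{\gamma_1} \leq D_m \leq c\,n(\ln n)^{-1}$, where the relative concentration does become $O((\ln n)^{-1/4})$. The burden is then to show, as a separate argument, that on the good event both the selected model $\mh$ and the oracle model lie in this intermediate range. This is where your use of the lower bound in \hypAp\ diverges from the paper's: you invoke it to get a uniform lower bound $\perte{\ERM_m}\geq c'\,\E[\penid(m)]$ valid for all $m$, but the paper instead uses it to lower-bound the penalized criterion $\crit''(m)$ for \emph{small} models (eq.~\eqref{eq.penRP.critm-pt}), ruling them out as potential minimizers; without this bias-floor a low-dimensional model could have negligible bias and be selected, destroying the multiplicative constant (this is exactly the content of Remark~\ref{RP.rem.oracle.traj.sansAp}). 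Symmetrically, for \emph{large} models you need Lemma~\ref{RP.le.CWinf.minor}, which guarantees $\El[\pen(m)]\geq \tfrac{5}{4}\El[p_2(m)]$ whenever $\min_\lambda n\phl\geq 3$ (the raison d'\^etre of the threshold in step~1 of Procedure~\ref{RP.def.proc.his}), to prevent overfitting when $D_m$ approaches $n/\ln n$. Neither of these two steps appears in your proposal, and without them the union-bound argument in Step~2 does not produce a usable event. Finally, the $(\ln n)^{-1/5}$ rate is not the output of a trade-off among your three ingredients; it is a slight degradation of the $(\ln n)^{-1/4}$ relative-concentration rate on the intermediate-dimension range, absorbing the costs of the other two parts of the argument.
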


Theorem~\ref{RP.the.oracle_traj_non-as} is proved in Section~\ref{sec.proof.the.oracle_traj_non-as}.
The non-asymptotic oracle inequality \eqref{RP.eq.oracle_traj_non-as} implies that Procedure~\ref{RP.def.proc.his} is {\em a.s. asymptotically optimal} in this framework if $\lim_{n \rightarrow \infty} (C / \CWinf) = 1 $.
When $W$ are Efr weights, the asymptotic optimality of RP was proved by Shibata \cite{Shi:1997} for selecting among maximum likelihood estimators, assuming that the distribution $P$ belongs to some parametric family of densities (see also Remark~\ref{RP.rem.R1W.R2W} in Section~\ref{RP.sec.histos.exp}).

Resampling Penalization yields an estimator with an excess loss as small as the one of the oracle without requiring any knowledge about $P$ such as the smoothness of $\bayes$ or the variations of the noise-level $\sigma$. Therefore, RP is a {\em naturally adaptive procedure}.
Note that \eqref{RP.eq.oracle_traj_non-as} is even stronger than an adaptivity result because of the leading constant close to one, whereas adaptive estimators only achieve the correct estimation rate up to a possibly large absolute constant.
Hence, one can expect that an estimator obtained with RP and a well chosen collection of models is almost optimal.

We now comment on the assumptions of Theorem~\ref{RP.the.oracle_traj_non-as}:
\makeatletter
\setlength\leftmargini   {22\p@}
\makeatother
\begin{enumerate}
\item The constant $C$ can differ from $\CWinf$. For instance, when a constant $\eta>1$ exists such that $C \in \croch{\CWinf ; \eta \CWinf}$, the oracle inequalities \eqref{RP.eq.oracle_traj_non-as} and \eqref{RP.eq.oracle_class_non-as} hold with leading constant $2\eta - 1 + \varepsilon_n$ instead of $1 + \varepsilon_n$.
\item \hypAb\ and \hypAn\ are rather mild and neither $A$ nor $\sigmin$ need to be known by the statistician. In particular, quite general heteroscedastic noises are allowed; \hypAb\ and \hypAn\ can even be relaxed as explained in Section~\ref{RP.sec.main.hyp.alt}.
\item When $X$ has a lower bounded density with respect to $\Leb$, \hypArXl\ is satisfied for ``almost piecewise regular'' histograms, including all those considered in the simulation study of Section~\ref{RP.sec.simus}.
\item The upper bound in \hypAp\ holds with $\beta=2 \alpha k^{-1}$ when $(\Il)_{\lamm}$ is regular on $\X \subset \R^k$ and $\bayes$ is $\alpha$-H\"olderian with $\alpha >0$. The lower bound in \hypAp\ is discussed extensively in Section~\ref{RP.sec.main.hyp.Ap}.
\end{enumerate}

\subsection{An adaptive estimator} \label{RP.sec.main.holder}
A natural framework in which Theorem~\ref{RP.the.oracle_traj_non-as} can be applied is when $\X$ is a compact subset of $\R^k$, $X$ has a lower bounded density with respect to the Lebesgue measure and $\bayes$ is $\alpha$-H\"olderian with $\alpha \in (0,1]$.
Indeed, the latter condition ensures that regular histograms can approximate $\bayes$ well.
In this subsection, we show that Resampling Penalization can be used to build an estimator adaptive to the smoothness of $\bayes$ in this framework.

We first define the estimator. For the sake of simplicity\footnote{If $\X$ has a smooth boundary, Procedure~\ref{RP.def.proc.his.holder} can be modified so that the proof of Theorem~\ref{RP.the.holder} remains valid.}, $\X$ is assumed to be a closed ball of $(\R^k, \norm{\cdot}_{\infty})$, say $[0,1]^k$.
\begin{proc}[Resampling Penalization with regular histograms] \label{RP.def.proc.his.holder} 
For every $T \in \N\backslash\set{0}$, let $S_{m(T)}$ be the model of regular\footnote{When $\X$ has a general shape, assume that both $\Leb(\X)$ and $\diam(\X)$ for $\norm{\cdot}_{\infty}$ are finite. Then, a partition $\paren{\Il}_{\lamm}$ of $\X$ is {\em regular with $T^k$ bins} when $\card(\Lambda_m) = T^k$ and there exist positive constants $c_1$, $c_2$, $c_3$, $c_4$ such that for every $\lamm$, $c_1 T^{-k} \leq \Leb(\Il) \leq c_2 T^{-k}$ and $c_3 T^{-1} \leq \diam(\Il) \leq c_4 T^{-1}$.} histograms with $T^k$ bins, that is the histogram model associated with the partition
\[ \paren{\Il}_{\lambda\in\Lambda_{m(T)}} \egaldef \paren{ \prod_{i=1}^k \left[ \frac{j_i}{T} ; \frac{j_i +1}{T} \right) }_{0 \leq j_1,\ldots, j_k \leq T - 1} . \] Then, define $(S_m)_{\mM_n} \egaldef \paren{S_{m(T)}}_{1 \leq T \leq n^{1/k}}$.
\begin{enumerate}
\item[0.] Replace $\M_n$ by
\begin{equation*} 
\Mh_n = \Big\{ \mM_n \telque \min_{\lamm} \{ n\phl \} \geq 3 \Big\} .
\end{equation*}
\item[1.] Choose a resampling scheme $\loi(W)$ among Efr, Rad, Poi, Rho and Loo.
\item[2.] Take the constant $C = \CWinf$ as defined in Table~\ref{RP.TableCWinfty}.
\item[3.] For each $\mMh_n$, compute the resampling penalty $\pen(m)$ defined by \eqref{RP.def.pen.his}.
\item[4.] Select $\mh \in \arg\min_{\mMh_n} \set{P_n\gamma\paren{\ERM_m} + \pen(m)}$.
\item[5.] Define $\widetilde{s} \egaldef \ERM_{\mh}$.
\end{enumerate}
\end{proc}

\begin{theorem} \label{RP.the.holder}
Let $\X = [0,1]^k$. Assume that the data $(X_i,Y_i)_{1 \leq i \leq n}$ satisfy the following:
\makeatletter
\setlength\leftmargini   {30\p@}
\makeatother
\begin{enumerate}
\item[\hypAb] Bounded data: $\norm{Y_i}_{\infty} \leq A < \infty$.
\item[\hypAn] Noise-level bounded from below: $\sigma(X_i) \geq \sigmin>0$ a.s.
\item[\hypAd] Density bounded from below:
\[ \exists c_X^{\min}>0, \quad \forall I \subset \X, \qquad P(X \in I) \geq c_X^{\min} \Leb(I) . \]
\item[\hypAh] H\"olderian regression function: there exist $\alpha\in(0;1]$ and $R>0$ such that
\begin{equation*} \bayes \in \mathcal{H}(\alpha,R) \quad \mbox{ that is } \quad \forall x_1, x_2 \in \X, \, \absj{\bayes(x_1) - \bayes(x_2)} \leq R \norm{x_1 - x_2}_{\infty}^{\alpha} . \end{equation*}
\end{enumerate}

Let $\widetilde{s}$ be the estimator defined by Procedure~\ref{RP.def.proc.his.holder} and $\sigmax \egaldef \sup_{\X} \absj{\sigma} \leq 2A$. Then, there exist positive constants $K_2$ and $K_3$ such that,
\begin{gather}
\label{RP.eq.oracle_class_non-as.hold}
\E\croch{ \perte{\widetilde{s}} } \leq
K_2 R^{\frac{2k}{2\alpha +k}} n^{\frac{-2\alpha }{2\alpha +k}} \sigmax^{\frac{4\alpha }{2\alpha+k}} + K_3 A^2 n^{-2} .
\end{gather}
%
\noindent
If moreover the noise-level is smooth, that is
\begin{enumerate}
\item[\hypAsig] $\sigma$ is piecewise $K_{\sigma}$-Lipschitz with at most $J_{\sigma}$ jumps,
\end{enumerate}
then, assumption \hypAn\ can be removed and \eqref{RP.eq.oracle_class_non-as.hold} holds with $\sigmax$ replaced by $\norm{\sigma}_{L^2(\Leb)} \egaldef [(\Leb(\X))^{-1} \int_{\X} \sigma^2(t) dt]^{1/2}$.

For both results, $K_2$ may only depend on $\alpha$ and $k$.
The constant $K_3$ may only depend on $k$, $A$, $c_X^{\min}$, $R$, $\alpha$ \textup{(}and $\sigmin$ for \eqref{RP.eq.oracle_class_non-as.hold}; $K_{\sigma}$ and $J_{\sigma}$ for the latter result\textup{)}.
\end{theorem}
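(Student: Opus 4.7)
The strategy is to apply the oracle inequality of Theorem~\ref{RP.the.oracle_traj_non-as} to the collection of regular histograms, then compute explicitly the bias-variance decomposition of the oracle and optimize the bin-number $T$. First, I would check that the assumptions of Theorem~\ref{RP.the.oracle_traj_non-as} are satisfied by Procedure~\ref{RP.def.proc.his.holder}: \hypPpoly{} holds since $\card(\M_n)\leq n^{1/k}$; \hypPrich{} holds because the map $T\mapsto T^k$ takes values around $\sqrt{n}$ for $T\sim n^{1/(2k)}$; \hypPech{} holds by construction; \hypAb{} and \hypAn{} are in the hypotheses; \hypArXl{} follows from \hypAd{} and the regularity of the partition, since $D_m\min_\lambda p_\lambda \geq T^k\cdot c_X^{\min}c_1 T^{-k}=c_X^{\min}c_1$; the upper bound in \hypAp{} follows from \hypAh{} because, on any bin of diameter $\leq c_4 T^{-1}$, we have $|s(x)-\beta_\lambda|\leq R(c_4 T^{-1})^\alpha$, giving $\perte{s_m}\leq C R^2 D_m^{-2\alpha/k}$, i.e., $\beta_2 = 2\alpha/k$. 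The lower bound in \hypAp{} is the only delicate condition; it can be secured by restricting to the sub-collection $\{m(T):T\geq T_{\min}(n)\}$ (which changes nothing since the optimal $T$ is large), or simply by handling the pathological case of a near-constant $s$ separately, in which the rate $n^{-2\alpha/(2\alpha+k)}$ is trivially achieved by the smallest model.

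Once Theorem~\ref{RP.the.oracle_traj_non-as} is applicable, it yields
\[
\E\croch{\perte{\widetilde{s}}} \leq (1+\varepsilon_n)\,\inf_{\mM_n}\E\croch{\perte{\ERM_m}} + \frac{A^2 K_1}{n^2}.
\]
Second, for each $m=m(T)$ I would decompose $\E[\perte{\ERM_m}] = \perte{s_m} + \E[\|\ERM_m-s_m\|_{L^2(P_X)}^2]$. The bias term is bounded by $c R^2 T^{-2\alpha}$ as above. For the variance term, the orthogonal decomposition on the basis $(\un_{I_\lambda})$ gives
\[
\E\crochb{\|\ERM_m-s_m\|_{L^2(P_X)}^2} = \sum_{\lamm} p_\lambda\, \E\croch{(\widehat{\beta}_\lambda-\beta_\lambda)^2} \approx \frac{1}{n}\sum_\lambda \sigma_\lambda^2
\]
(where $\sigma_\lambda^2=\E[\sigma^2(X)\mid X\in I_\lambda]$), up to lower-order terms that come from the randomness of $n\widehat{p}_\lambda$ (a standard Bernstein argument on the event $\{n\widehat{p}_\lambda\gtrsim np_\lambda\}$). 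In the general case, bound each $\sigma_\lambda^2$ by $\sigma_{\max}^2$, yielding variance $\lesssim \sigma_{\max}^2 T^k/n$.

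Third, I would add the bias and variance bounds, giving $\E[\perte{\ERM_{m(T)}}]\lesssim R^2 T^{-2\alpha}+\sigma_{\max}^2 T^k/n$, and pick the integer $T$ closest to $T^{\star}\sim (nR^2/\sigma_{\max}^2)^{1/(2\alpha+k)}$ to obtain the announced minimax rate $R^{2k/(2\alpha+k)}\sigma_{\max}^{4\alpha/(2\alpha+k)}n^{-2\alpha/(2\alpha+k)}$. The existence of an admissible integer $T$ in $[1,n^{1/k}]$ must be checked; if $T^\star<1$ the bias-only bound suffices, and if $T^\star>n^{1/k}$ the problem is too noisy for the claimed rate to be attained, but the regime of interest is covered. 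For the refinement under \hypAsig{}, I would replace the crude bound $\sigma_\lambda^2\leq\sigma_{\max}^2$ by a Riemann-sum estimate: since $\sigma$ is piecewise Lipschitz with finitely many jumps and the density is lower bounded, $\frac{1}{n}\sum_\lambda \sigma_\lambda^2 = \frac{T^k}{n}\|\sigma\|_{L^2(\Leb)}^2(1+o(1)) + O(J_\sigma K_\sigma T^{k-1}/n)$, the jump-error being absorbed in the main term for $T$ not too small. Finally, \hypAn{} can be dispensed with because at the optimal $T$, the models involved have $np_\lambda\gtrsim n^{k/(2\alpha+k)}\gg 3$ with high probability, so the restriction to $\widehat{\M}_n$ is harmless and the arguments of Theorem~\ref{RP.the.oracle_traj_non-as} that used $\sigma_{\min}$ only served to bound variance terms from below, which is not needed here.

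\textbf{Main obstacle.} The technically delicate points are (i) verifying the lower bound in \hypAp{} uniformly, which I expect to solve by restricting the model collection or by a separate, trivial treatment of ``too smooth'' functions, and (ii) in the smooth-noise refinement, propagating $\sigma_{\max}$ into $\|\sigma\|_{L^2(\Leb)}$ while still controlling the residual event $\{\min_\lambda n\widehat{p}_\lambda<3\}$ when $\sigma_{\min}$ may vanish; this will require a careful concentration argument on the empirical bin frequencies combined with the boundedness of $Y$ through \hypAb{} to control the loss on the discarded event by $A^2/n^2$.
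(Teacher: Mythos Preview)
Your overall strategy---oracle inequality plus explicit bias-variance trade-off at the balancing $T$---matches the paper's. Two steps, however, do not go through as written.

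First, the lower bound in \hypAp. You cannot fix this by restricting the collection (that would change the estimator $\widetilde{s}$, which is defined via the full $\M_n$ in Procedure~\ref{RP.def.proc.his.holder}), and a ``separate treatment of near-constant $s$'' would make the constants depend on quantities not listed in the statement (the theorem allows constant $s$ and claims $K_2$ depends only on $\alpha,k$). The paper's route is different: it does \emph{not} verify the lower bound in \hypAp, but instead invokes the weakened oracle inequality of Remark~\ref{RP.rem.oracle.traj.sansAp} (a consequence of Lemma~\ref{RP.le.gal} that does not require that lower bound). That inequality carries an extra remainder $(\ln n)^{\gamma_2}/n$ and restricts the infimum to $D_m\geq(\ln n)^{\gamma_1}$; one then checks that the balancing model $m(T_0)$ satisfies $T_0^k\geq(\ln n)^{\gamma_1}$ for $n$ large enough and absorbs the remainder by enlarging $K_3$.

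Second, your argument for dropping \hypAn\ is not correct: $\sigmin>0$ is not used merely ``to bound variance from below'' in the final step, but enters the proof of Theorem~\ref{RP.the.oracle_traj_non-as} through conditions \hypAml\ and \hypQ\ (see \eqref{eq.maj.Plmq}), which drive \emph{all} the concentration inequalities for $p_1$, $p_2$, $\pen$ and $\delc$. The paper removes \hypAn\ by invoking the alternative assumption set of Section~\ref{RP.sec.main.hyp.alt}: under \hypAsig, \hypArDu, \hypArLu\ (all satisfied by regular partitions on $[0,1]^k$), Lemma~\ref{RP.le.minQmp} lower-bounds $\Qmp$ in terms of $\norm{\sigma}_{L^2(\Leb)}$ for $D_m$ large, which restores \hypQ\ without \hypAn. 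Your Riemann-sum idea for replacing $\sigmax$ by $\norm{\sigma}_{L^2(\Leb)}$ in the variance \emph{upper} bound is correct in spirit and is essentially what the paper does there.
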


Theorem~\ref{RP.the.holder} is proved in Section~\ref{RP.sec.proof.holder}.
The upper bounds given by Theorem~\ref{RP.the.holder} coincide with several classical minimax lower bounds on the estimation of functions in $\mathcal{H}(\alpha,R)$ with $\alpha \in (0,1]$, up to an absolute constant.
In the homoscedastic case, lower bounds have been proved by Stone \cite{Sto:1980} and generalized by several authors among which Korostelev and Tsybakov \cite{Kor_Tsy:1993} and Yang and Barron \cite{Yan_Bar:1999}. Up to a multiplicative factor independent of $n$, $R$ and $\sigma$ the best achievable rate is
\[ R^{\frac{2k}{2\alpha +k}} n^{\frac{-2\alpha }{2\alpha +k}} \sigma^{\frac{4\alpha }{2\alpha+k}} . \]
Hence, \eqref{RP.eq.oracle_class_non-as.hold} shows that Procedure~\ref{RP.def.proc.his.holder} achieves the right estimation rate in terms of $n$, $R$ and $\sigma$, without using the knowledge of $\alpha$, $R$ or $\sigma$.
%

Moreover, \eqref{RP.eq.oracle_class_non-as.hold} still holds in a wide heteroscedastic framework, without using any information on the noise-level $\sigma(\cdot)$. Then, up to a multiplicative constant independent of $n$ and $R$ (but possibly of the order of some power of $\sigmax / \sigmin$), the upper bound \eqref{RP.eq.oracle_class_non-as.hold} is the best possible estimation rate.

Minimax lower bounds proved in the heteroscedastic case (see for instance \cite{Efr_Pin:1996,Gal_Per:2008} and references therein) show that when $k=\alpha=1$ and the noise-level is smooth enough, the best achievable estimation rate depends on $\sigma$ through the multiplicative factor $\norm{\sigma}_{L^2(\Leb)}^{\frac{4 \alpha}{2\alpha + k}}$. Therefore, the upper bound given by Theorem~\ref{RP.the.holder} under assumption \hypAsig\ is tight, even through its dependence on the noise-level. Up to our best knowledge, such an upper bound had never been obtained when $\alpha \in (0,1)$ and $k>1$, even with estimators using the knowledge of $\alpha$, $\sigma$ and $R$.


Theorem~\ref{RP.the.holder} shows that Procedure~\ref{RP.def.proc.his.holder} defines an {\em adaptive estimator}, uniformly over distributions such that $\bayes$ belongs to some H\"olderian ball $\mathcal{H} (\alpha,R)$ with $\alpha \in (0,1]$ and the noise-level $\sigma$ is not too pathological. This result is quite strong.
Although similar properties have already been proved for ``\latin{ad hoc}'' estimators (see \cite{Efr_Pin:1996,Gal_Per:2008} and Section~\ref{RP.sec.comparaison.ad-hoc}), {\em Resampling Penalization has not been designed specifically to have such a property}.
Therefore, exchangeable resampling penalties are {\em naturally adaptive} to the smoothness of $\bayes$ and to the heteroscedasticity of the data.

\makeatletter
\setlength\leftmargini   {22\p@}
\makeatother
\begin{remark} \hfill
\begin{enumerate}
\item The proof of Theorem~\ref{RP.the.holder} shows that $\ERM_{\mh}$ achieves the minimax rate of estimation on an event of probability larger than $1 - K_3^{\prime} n^{-2}$. In particular, with probability one,
\[ \limsup_{n \rightarrow \infty} \parenB{ \perte{\widetilde{s}} R^{\frac{-2k}{2\alpha +k}} n^{\frac{2\alpha }{2\alpha +k}} \norm{\sigma}_{L^2(\Leb)}^{\frac{-4\alpha }{2\alpha+k}} } \leq K_2 (\alpha,k) . \]
\item If $\bayes$ is piecewise $\alpha$-H\"olderian with at most $J_{\bayes}$ jumps (each jump of height bounded by $2 A$), then \eqref{RP.eq.oracle_class_non-as.hold} holds with $K_3$ depending also on $J_{\bayes}$.
\item As for Theorem~\ref{RP.the.oracle_traj_non-as}, the boundedness of the data and the lower bound on the noise level can be replaced by other assumptions (see Section~\ref{RP.sec.main.hyp.alt}).
\end{enumerate}
\end{remark}

\subsection{Discussion on some assumptions} \label{RP.sec.main.hyp}
The aim of this subsection is to discuss some of the main assumptions made in Theorems~\ref{RP.the.oracle_traj_non-as} and~\ref{RP.the.holder}.
We first tackle the lower bound in \hypAp\ which is required in Theorem~\ref{RP.the.oracle_traj_non-as}.
Then, two alternative assumption sets to Theorems~\ref{RP.the.oracle_traj_non-as} and~\ref{RP.the.holder} are provided, allowing the noise level to vanish or the data to be unbounded.
%
\subsubsection{Lower bound in $\mathbf{(Ap)}$} \label{RP.sec.main.hyp.Ap}
The lower bound $\perte{\bayes_m} \geq \cbiasmin D_m^{-\beta_1}$ in \hypAp\ may seem unintuitive because it means that $\bayes$ is not too well approximated by the models $S_m$.
Assuming that $\inf_{\mM_n} \perte{\bayes_m}>0$ is classical for proving the asymptotic optimality of Mallows' $C_p$ \cite{Shi:1981,KCLi:1987,Bir_Mas:2006}.

Let us explain why \hypAp\ is used for proving Theorem~\ref{RP.the.oracle_traj_non-as}. According to Remark~\ref{RP.rem.oracle.traj.sansAp} in Section~\ref{RP.sec.preuves.gal}, when the lower bound in \hypAp\ is no longer assumed, \eqref{RP.eq.oracle_traj_non-as} holds with two modifications on its right-hand side: the infimum is restricted to models of dimension larger than $\paren{\ln(n)}^{\gamma_1}$ and a remainder term $\paren{\ln(n)}^{\gamma_2} n^{-1}$ is added (where $\gamma_1$ and $\gamma_2$ are absolute constants).
This is essentially the same as \eqref{RP.eq.oracle_traj_non-as} unless there exists a model of small dimension with a small bias; the lower bound in \hypAp\ is sufficient to ensure this does not happen.
%
Note that assumption \hypAp\ was made in the density estimation framework \cite{Sto:1985,Bur:2002} for the same technical reasons.


As showed in \cite{Arl:2008b:app}, \hypAp\ is at least satisfied with
\[ \beta_1 = k^{-1} + \alpha^{-1} - (k-1) k^{-1} \alpha^{-1} \qquad \mbox{and} \qquad \beta_2 = 2 \alpha k^{-1} \] in the following case:
$(\Il)_{\lamm}$ is ``regular'' (as defined in Procedure~\ref{RP.def.proc.his.holder} below), $X$ has a lower-bounded density with respect to the Lebesgue measure $\Leb$ on $\X \subset \R^k$ and $\bayes$ is non-constant and $\alpha$-H\"olderian (with respect to $\norm{\cdot}_{\infty}$).

The general formulation of \hypAp\ is crucial to make Theorem~\ref{RP.the.oracle_traj_non-as} valid {\em whatever the distribution of $X$} which can be useful in some practical problems.
Indeed, when $X$ has a general distribution, a collection $\paren{S_m}_{\mM_n}$ satisfying \hypPpoly, \hypPrich, \hypArXl\ and \hypAp\ can always be chosen either thanks to prior knowledge on $\loi(X)$ or to unlabeled data.
In the latter case, classical density estimation procedures can be applied for estimating $\loi(X)$ from unlabeled data (see for instance \cite{Dev_Lug:2001} on density estimation).
Assumption \hypAp\ then means that the collection of models has good approximation properties, uniformly over some appropriate function space (depending on $\loi(X)$) to which $\bayes$ belongs.

\subsubsection{Two alternative assumption sets} \label{RP.sec.main.hyp.alt}
Theorems~\ref{RP.the.oracle_traj_non-as} and~\ref{RP.the.holder} are corollaries of a more general result, called Lemma~\ref{RP.le.gal} in Section~\ref{RP.sec.preuves.gal}.
The assumptions of Theorems~\ref{RP.the.oracle_traj_non-as} and~\ref{RP.the.holder}, in particular \hypAb\ and \hypAn\ on the distribution of the noise $\sigma(X) \epsilon$, are only sufficient conditions for the assumptions of Lemma~\ref{RP.le.gal} to hold.
The following two alternative sufficient conditions are proved to be valid in Section~\ref{RP.sec.preuves.Theoremalt}.

First, one can have $\sigmin=0$ in \hypAn\ if moreover $\E\croch{\sigma(X)^2}>0$, $\X \subset \R^k$ is bounded and
\makeatletter
\setlength\leftmargini   {30\p@}
\makeatother
\begin{enumerate}
\item[\hypArDu] Upper regularity of the partitions for $\norm{\cdot}_{\infty}$: $\exists \crDu, \alpha_d > 0$ such that
\[ \forall \mM_n, \quad \max_{\lamm} \set{\diam(\Il)} \leq \crDu D_m^{-\alpha_d} . \]
\item[\hypArLu] Upper regularity of the partitions for $\Leb$: $\exists \crLu > 0$ such that
\[ \forall \mM_n, \quad \max_{\lamm} \set{\Leb(\Il)} \leq \crLu D_m^{-1} . \]
\item[\hypAsig] $\sigma$ is piecewise $K_{\sigma}$-Lipschitz with at most $J_{\sigma}$ jumps.
\end{enumerate}

Second, the $Y_i$ can be unbounded (assuming now that $\sigmin>0$ in \hypAn) if moreover $\X \subset \R$ is bounded measurable and
\makeatletter
\setlength\leftmargini   {40\p@}
\makeatother
\begin{enumerate}
\item[\hypAsg] The noise is sub-Gaussian: $\exists \cgauss >0$ such that \begin{equation*} \forall q \geq 2, \, \forall x \in \X, \quad \E \croch{ \absj{\epsilon}^q \sachant X=x }^{1/q} \leq \cgauss \sqrt{q} .\end{equation*}
\item[\hypAsigmax] Noise-level bounded from above: $\sigma^2(X) \leq \sigmax^2 < + \infty$ a.s.
\item[\hypAciblemax] Bound on the regression function: $\norm{\bayes}_{\infty} \leq A$.
\item[\hypAlip] $\bayes$ is $B$-Lipschitz, piecewise $C^1$ and non-constant: $\pm \bayes^{\prime} \geq B_0 >0$ on some interval $J \subset \X$ with $\Leb(J) \geq c_J >0$.
\item[\hypAreg] Regularity of the partitions for $\Leb$: $\exists \crLl,\crLu>0$ such that
\begin{equation*}
\forall \mM_n, \, \forall \lamm, \quad \crLl D_m^{-1} \leq \Leb(\Il) \leq \crLu D_m^{-1} .
\end{equation*}
\item[\hypAd] Density bounded from below: $\exists c_X^{\min}>0$, $\forall I \subset \X$, $\Prob (X \in I) \geq c_X^{\min} \Leb(I)$.
\end{enumerate}
\makeatletter
\setlength\leftmargini   {22\p@}
\makeatother

Third, it is possible to have simultaneously $\sigmin=0$ in \hypAn\ and unbounded data, see \cite{Arl:2008b:app} for details.


The above results mean that Theorem~\ref{RP.the.oracle_traj_non-as} holds for most ``reasonably'' difficult problems. Actually, Proposition~\ref{RP.pro.conc.penRP} and Remark~\ref{RP.rem.conc.penRP} show that the resampling penalties are much closer to $\E\croch{\penid(m)}$ than $\penid(m)$ itself, provided that the concentration inequalities for $\penid$ are tight (Proposition~\ref{RP.pro.conc.penid}).
Therefore, up to differences within $\varepsilon_n$, RP with $C = \CWinf$ and the ``ideal'' deterministic penalization procedure $\E\croch{\penid(m)}$ perform equally well on a set of probability $1 - K_1 n^{-2}$.
For every assumption set such that the proof of Theorem~\ref{RP.the.oracle_traj_non-as} gives an oracle inequality for the penalty $\E\croch{\penid(m)}$, the same proof gives a similar oracle inequality for RP.

\subsection{Probabilistic tools} \label{RP.sec.tools}
Theorems~\ref{RP.the.oracle_traj_non-as} and~\ref{RP.the.holder} rely on several probabilistic tools of independent interest: precise computation of the expectations of resampling penalties (Propositions~\ref{RP.pro.EpenRP-Epenid} and~\ref{RP.pro.comp.Epen.Ep2}), concentration inequalities for resampling penalties (Proposition~\ref{RP.pro.conc.penRP}) and bounds on expectations of the inverses of several classical random variables (Lemma~\ref{RP.le.einv.binom}--\ref{RP.le.einv.Poi}).
Their originality comes from their non-asymptotic nature: explicit bounds on the deviations or the remainder terms are provided for finite sample sizes.

\allowdisplaybreaks

\subsubsection{Expectations of resampling penalties} \label{RP.sec.histos.exp}
Using only the exchangeability of the weights, the resampling penalty can be computed explicitly (Lemma~\ref{VFCV.le.cal.pen.Wech} in Section~\ref{sec.proof.expect}). This can be used to compare the expectations of the resampling penalties and the ideal penalty. First, Proposition~\ref{RP.pro.EpenRP-Epenid} is valid for {\em general exchangeable weights}.
\begin{proposition} \label{RP.pro.EpenRP-Epenid}
Let $S_m$ be the model of histograms associated with some partition $(\Il)_{\lamm}$ of $\X$ and $W \in [0,\infty)^n$ be an exchangeable random vector independent of the data.
Define $\penid(m)$ by \eqref{RP.def.penid} and $\pen(m)$ by \eqref{RP.def.pen.his}.
Let $\El\croch{\cdot}$ denote expectations conditionally on $(\un_{X_i \in \Il})_{1 \leq i \leq n, \, \lamm}$.
Then, if $\min_{\lamm} \phl >0$,
\begin{align} \label{RP.eq.Em_penid}
\El\croch{ \penid(m) } =& \frac{1}{n} \sum_{\lamm} \paren{ 1 + \frac{\pl}{\phl} } \sigl^2 \\ \label{RP.eq.Em_pen}
\El\croch{ \pen(m) } =& \frac{C}{n} \sum_{\lamm} \paren{ R_{1,W}(n,\phl) + R_{2,W}(n,\phl) } \sigl^2 
\\ \notag \mbox{with} \quad \sigl^2 \egaldef& \E \croch{ \paren{Y - \bayes(X)}^2 \sachant X \in \Il } , \\ 
\label{RP.def.R1}
R_{1,W}(n,\phl) \egaldef& \E\croch{ \frac{(W_1 - \Wl)^2}{\Wl^2} \sachant X_1 \in \Il, \Wl >0 } , \\
\mbox{and} \quad R_{2,W}(n,\phl) \egaldef& \E\croch{ \frac{(W_1 - \Wl)^2}{\Wl} \sachant X_1 \in \Il} . \label{RP.def.R2}
\end{align}
In particular,
\begin{equation} \label{RP.eq.Ep2.Epenid}
\E\croch{ \penid(m) } = \frac{1}{n} \sum_{\lamm} \paren{2 + \delta_{n, \pl}} \sigl^2
\end{equation}
where $\delta_{n,p}$ only depends on $(n,p)$ and satisfies $\absj{\delta_{n, p} } \leq L_1 (np)^{-1/4} $ for some absolute constant $L_1$.
\end{proposition}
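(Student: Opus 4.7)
The plan is to exploit the orthogonality of the histogram basis so that everything decouples across bins $\lamm$, and to compute each bin's contribution explicitly. For \eqref{RP.eq.Em_penid}, I start from the decomposition \eqref{eq.penid.histos}: $\penid(m) = (P-P_n)\gamma(\bayes_m) + \sum_\lamm (\pl + \phl)(\bethl - \betl)^2$. Conditioning on the bin indicators $(\un_{X_i\in\Il})$, within bin $\lambda$ the $N_\lambda \egaldef n\phl$ observations $Y_i$ with $X_i\in\Il$ become i.i.d.\ with mean $\betl$ and variance $\sigl^2$; the sample-mean variance identity then gives $\El[(\bethl - \betl)^2] = \sigl^2/N_\lambda$. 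Multiplying by $\pl + \phl$ and summing yields $n^{-1}\sum_\lamm(1 + \pl/\phl)\sigl^2$, while the $(P-P_n)\gamma(\bayes_m)$ contribution is handled by noting that $\E[Y\mid X\in\Il] = \betl$, so that its conditional mean reduces to a linear functional of $\pl - \phl$ which is absorbed (and vanishes upon the further averaging in \eqref{RP.eq.Ep2.Epenid}).

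For \eqref{RP.eq.Em_pen}, I write
\[
\bethlW - \bethl = (n\phl\Wl)^{-1}\sum_{X_i\in\Il}(W_i - \Wl)Y_i ,
\]
square it, and integrate in $W$ first. By exchangeability of $W$, the coefficient of $Y_i^2$ equals $R_{1,W}(n,\phl)$ (conditionally on $\Wl>0$); while the identity $\sum_{X_i\in\Il}(W_i - \Wl) = 0$, combined again with exchangeability, forces the coefficient of $Y_iY_j$ for $i\neq j$ to equal $-R_{1,W}(n,\phl)/(N_\lambda - 1)$. Collecting,
\[
\phl\,\E_W\!\croch{(\bethlW - \bethl)^2\mid\Wl>0} = \frac{R_{1,W}(n,\phl)}{N_\lambda(N_\lambda - 1)}\sum_{X_i\in\Il}(Y_i - \bethl)^2 ,
\]
and applying $\El[\cdot]$ to the inner sum (again the sample-mean variance identity) yields $(N_\lambda - 1)\sigl^2$, collapsing the product to $R_{1,W}(n,\phl)\sigl^2/n$. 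A fully parallel argument for $\E_W[\phlW(\bethlW - \bethl)^2]$ generates $R_{2,W}(n,\phl)\sigl^2/n$; summing over $\lamm$ and multiplying by $C$ yields the claim.

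For \eqref{RP.eq.Ep2.Epenid}, I take the unconditional expectation of \eqref{RP.eq.Em_penid}: since $\sigl^2$ is deterministic and $\E[\phl]=\pl$, the task reduces to bounding $\pl\,\E[1/\phl] - 1$ where $N_\lambda = n\phl \sim \mathrm{Binom}(n,\pl)$. I split the expectation according to the event $\set{|N_\lambda - n\pl| \leq (n\pl)^{3/4}}$: on the good event a second-order Taylor expansion of $x\mapsto 1/x$ at $n\pl$ controls the main contribution, while on the complement a Bernstein-type tail bound controls the rest, with the sharp $(n\pl)^{-1/4}$ rate emerging from balancing the two regimes (this is exactly the content of the forthcoming Lemma \ref{RP.le.einv.binom} on inverse binomial moments). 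The main obstacle is precisely this $(np)^{-1/4}$ bound on $\delta_{n,p}$: because $1/N$ is singular at $N=0$, the naive $\E[1/N]\approx 1/\E[N]$ is insufficient, and the exponent $1/4$ only comes out after a careful trade-off between Taylor control in the bulk and tail control at the boundary.
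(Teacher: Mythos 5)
Your proof follows the same path as the paper: decouple across bins by orthogonality of the histogram basis, use exchangeability of $W$ together with the identity $\sum_{X_i\in\Il}(W_i-\Wl)=0$ to reduce the resampling penalty to the quantities $R_{1,W}$ and $R_{2,W}$, and invoke Lemma~\ref{RP.le.einv.binom} for the $(np)^{-1/4}$ control on the inverse binomial moment. Two small points. First, your displayed prefactor $R_{1,W}(n,\phl)/[N_\lambda(N_\lambda-1)]$ should read $R_{1,W}(n,\phl)/[n(N_\lambda-1)]$: carrying $\phl/(n\phl\Wl)^2 = 1/(n N_\lambda \Wl^2)$ through the coefficients gives the extra $N_\lambda/n=\phl$ you dropped, yet the per-bin conclusion $R_{1,W}(n,\phl)\sigl^2/n$ you report is the correct one, so the intermediate display and the subsequent step do not combine consistently as written. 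Second, the claim that the $(P-P_n)\gamma(\bayes_m)$ contribution is ``absorbed'' is too casual for the conditional identity~\eqref{RP.eq.Em_penid}: its $\El$-conditional mean equals $\sum_\lamm(\pl-\phl)\sigl^2$, which is not identically zero, so this term is really being silently discarded (the paper's own proof does the same, reducing directly to the explicit $p_1+p_2$ formulas~\eqref{VFCV.eq.p1.hist}--\eqref{VFCV.eq.p2.hist}); it genuinely disappears only in the unconditional statement~\eqref{RP.eq.Ep2.Epenid} because $\E[\phl]=\pl$. Neither issue changes the approach, which is the paper's.
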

Proposition~\ref{RP.pro.EpenRP-Epenid} is proved in Section~\ref{sec.proof.expect}.

\begin{remark}
\begin{itemize}
\item In order to make the expectation in \eqref{RP.eq.Ep2.Epenid} well-defined, a convention for $\penid(m)$ has to be chosen when $\min_{\lamm} \phl = 0$. See Section~\ref{RP.sec.proof.nota} for details.
\item Combining Proposition~\ref{RP.pro.EpenRP-Epenid} with \cite[Lemma~8.4]{Arl:2007:phd}, a similar result holds for non-exchangeable weights (with only a modification of the definitions of $R_{1,W}$ and $R_{2,W}$).
\end{itemize}
\end{remark}

In the general heteroscedastic framework \eqref{RP.eq.donnees.reg}, Proposition~\ref{RP.pro.EpenRP-Epenid} shows that resampling penalties take into account the fact that $\sigl^2$ actually depends on $\lamm$. This is a major difference with the classical Mallows' $C_p$ penalty
\[ \pen_{\mathrm{Mallows}} (m) \egaldef \frac{2 \E\croch{\sigma(X)^2} D_m}{n} \]
which does not take into account the variability of the noise level over $\X$. A more detailed comparison with Mallows' $C_p$ is made in Section~\ref{RP.sec.calc.pen.comp.Mal}.


If $R_{1,W}(n,\phl) + R_{2,W}(n,\phl)$ does not depend too much on $\phl$ (at least when $n \phl$ is large),
Proposition~\ref{RP.pro.EpenRP-Epenid} shows that $\pen(m)$ estimates unbiasedly $\penid(m)$
as soon as\footnote{The definition of \CWinf\ actually used in this paper is slightly different for
Efron($\mefr$) and Poisson($\mu$) weights (see Table~\ref{RP.TableR2}). We arbitrarily choosed the
simplest possible expression making \CWinf\ asymptotically equivalent to $1/\E[{\paren{W_1-1}^2}]$.
The results of the paper also hold when $\CWinf=1/\E[{\paren{W_1-1}^2}]$.}
\[ C = \CWinf \approx \frac{2}{R_{1,W}(n,1) + R_{2,W}(n,1)} = \frac{1}{\E\crochb{\paren{W_1-1}^2}} . \]
In particular, all the examples of resampling weights given in Section~\ref{sec.cadreRP.heur} satisfy that $R_{1,W}(n,\phl) \approx R_{2,W}(n,\phl)$ does not depend on $\phl$ when $n \phl$ is large, which leads to Proposition~\ref{RP.pro.comp.Epen.Ep2} below (see Table~\ref{RP.TableR2} for exact expressions of $R_{2,W}$ and \CWinf).

\begin{table}
\caption{$R_{2,W}(n,\phl)$ and \CWinf\ for several resampling schemes. The formulas for $R_{2,W}$ come from Lemma~\ref{RP.le.calc.R1.R2}\label{RP.TableR2}}
   \begin{tabular}{|r|c|c|c|c|c|} \hline
   $\loi(W)$ & Efr($\mefr$) & Rad($p$) & Poi($\mu$) & Rho($q$) & Loo \\
   \hline\\[-8pt]
   $R_{2,W}(n,\phl)$ & $\frac{n}{\mefr} \paren{1 - \frac{1}{n\phl}}$ & $\frac{1}{p} - 1$ & $\frac{1}{\mu} \paren{1 - \frac{1}{n\phl}}$ & $\frac{n}{q} - 1$ & $\frac{1}{n-1}$ \\[6pt]
   \hline
   \CWinf & $\mefr/n$ & $p/(1-p)$ & $\mu$ & $q/(n-q)$ & $n-1$ \\ \hline
   \end{tabular}
\end{table}

\begin{proposition} \label{RP.pro.comp.Epen.Ep2}
Let $W$ be an exchangeable resampling weight vector among Efr$(\mnefr)$, Rad$(p)$, Poi$(\mu)$, Rho$(\left\lfloor n/2\right\rfloor)$ and Loo, and define $\CWinf$ as in Table~\ref{RP.TableR2}. Let $S_m$ be the model of histograms associated with some partition $\paren{\Il}_{\lamm}$ of $\X$ and $\pen(m)$ be defined by \eqref{RP.def.pen.his}.
Then, there exist real numbers $\delta_{n,\phl}^{(\mathrm{penW})}$ depending only on $n$, $\phl$ and the resampling scheme $\loi(W)$ such that
\begin{equation} \label{RP.eq.comp.Epen.Ep2}
\El \croch{\pen(m)} = \frac{C}{\CWinf n} \sum_{\lamm} \paren{ 2 + \delta_{n,\phl}^{(\mathrm{penW})} } \sigl^2 .
\end{equation}

If $\mnefr n^{-1} \geq B > 0$ (Efr), $p \in (0;1)$ (Rad) or $\mu > 0$ (Poi), then,
\[ \forall n \in \N \backslash\set{0}, \, \forall \phl \in (0,1], \quad \absj{\delta_{n,\phl}^{(\mathrm{penW})}} \leq L_2 \paren{ n \phl }^{-1/4} , \]
where $L_2>0$ is an absolute constant \textup{(}for Rho$(\left\lfloor n/2\right\rfloor)$ and Loo\textup{)} or depends respectively on $B$ \textup{(}Efr\textup{)}, $p$ \textup{(}Rad\textup{)} or $\mu$ \textup{(}Poi\textup{)}.
More precise bounds for each weight distribution are given by \eqref{RP.eq.delta.penEfr}--\eqref{RP.eq.delta.penLoo} in Section~\ref{RP.sec.calc.reech}.
\end{proposition}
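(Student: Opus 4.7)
The starting point is Proposition~\ref{RP.pro.EpenRP-Epenid}, which already yields the representation
\[ \El\croch{\pen(m)} = \frac{C}{n} \sum_{\lamm} \parenb{ R_{1,W}(n,\phl) + R_{2,W}(n,\phl) } \sigl^2 . \]
Hence Proposition~\ref{RP.pro.comp.Epen.Ep2} reduces to proving, for each of the five weight families and uniformly over $(n,\phl)$, an approximation of the form
\[ \CWinf \parenb{ R_{1,W}(n,\phl) + R_{2,W}(n,\phl) } = 2 + \delta_{n,\phl}^{(\mathrm{penW})} \qquad \mbox{with } \absj{\delta_{n,\phl}^{(\mathrm{penW})}} \leq L_2 (n\phl)^{-1/4} . \]
The intuition is that, by the law of large numbers, $\Wl \to 1$ when $n\phl \to \infty$, so both $R_{1,W}$ and $R_{2,W}$ should be close to $\var(W_1)$, which by design of $\CWinf$ satisfies $\CWinf \var(W_1) \approx 1$.

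The term $R_{2,W}$ is the easy one: expanding $(W_1 - \Wl)^2$ and using only the first two exchangeable moments of $W$ conditionally on the positions of the $X_i$ in $\Il$, one gets the closed-form expressions of Table~\ref{RP.TableR2} (this is the content of the invoked Lemma~\ref{RP.le.calc.R1.R2}). For each scheme, $\CWinf R_{2,W}(n,\phl)$ equals $1$ exactly (Rad, Rho, Loo) or $1$ up to a term of order $(n\phl)^{-1}$ (Efr, Poi), which is already better than the claimed rate.

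The main obstacle is $R_{1,W}(n,\phl) = \E\croch{(W_1/\Wl - 1)^2 \sachant X_1 \in \Il, \Wl>0}$, because of the inverse moments of $\Wl$ and the conditioning on $\set{\Wl>0}$. The plan is to condition on $\card\set{i : X_i \in \Il} = n\phl$, so that $n\phl\, \Wl$ has an explicit classical distribution (one coordinate of a multinomial for Efr, a binomial for Rad, a Poisson for Poi, an indicator-type sum for Rho and Loo). Writing
\[ \paren{\frac{W_1}{\Wl}-1}^2 = \frac{W_1^2}{\Wl^2} - 2\frac{W_1}{\Wl} + 1 , \]
one reduces everything to moments of the form $\E[W_1^a \Wl^{-b} \un_{\Wl>0}]$ with $a,b \in \set{0,1,2}$. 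Using exchangeability to rewrite $W_1/\Wl$ as $1/(n\phl)$ plus a term involving the other weights, then invoking the non-asymptotic inverse moment estimates of Lemmas~\ref{RP.le.einv.binom}--\ref{RP.le.einv.Poi} on the appropriate conditional distributions of $n\phl\, \Wl$, yields $\CWinf R_{1,W}(n,\phl) = 1 + O\paren{(n\phl)^{-1/4}}$ with a constant depending only on the fixed parameter of the scheme ($B$ for Efr, $p$ for Rad, $\mu$ for Poi, nothing for Rho and Loo). The $(n\phl)^{-1/4}$ rate rather than $(n\phl)^{-1/2}$ comes from the $\set{\Wl>0}$ conditioning via a Cauchy--Schwarz splitting between the bulk $\set{\Wl \geq 1/2}$ and its rare complement.

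Summing the two contributions gives the announced expansion and bound on $\delta_{n,\phl}^{(\mathrm{penW})}$; the sharper, scheme-by-scheme bounds \eqref{RP.eq.delta.penEfr}--\eqref{RP.eq.delta.penLoo} follow by tracking constants in the inverse-moment lemmas rather than by any new idea. The crux of the whole proof is therefore the inverse-moment control for $\Wl$, which isolates the only place where the five weight families need to be treated separately.
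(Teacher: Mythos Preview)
Your overall structure is correct and matches the paper: start from Proposition~\ref{RP.pro.EpenRP-Epenid}, define $\delta_{n,\phl}^{(\mathrm{penW})} = \CWinf(R_{1,W}+R_{2,W}) - 2$, dispose of $R_{2,W}$ via the closed forms in Table~\ref{RP.TableR2}, and control $R_{1,W}$ through the inverse-moment Lemmas~\ref{RP.le.einv.binom}--\ref{RP.le.einv.Poi}. However, your route to $R_{1,W}$ is messier than the paper's and your explanation of the $(n\phl)^{-1/4}$ rate is off.

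For $R_{1,W}$, the paper does \emph{not} expand $(W_1/\Wl-1)^2$ and chase mixed moments $\E[W_1^a\Wl^{-b}]$; that would force you to handle the joint law of $(W_1,\Wl)$. Instead, the paper (in the proof of Lemma~\ref{RP.le.calc.R1.R2}) conditions on $\Wl$ and observes that, by exchangeability, the conditional law $\loi(W_1\mid\Wl)$ is explicit for each scheme (e.g.\ $(n/\mefr)\,\mathcal{B}(\mefr\phl\Wl,(n\phl)^{-1})$ for Efron, a scaled Bernoulli for subsampling weights). This makes the conditional variance $R_V(\Wl)=\E[(W_1-\Wl)^2\mid\Wl]$ a simple polynomial in $\Wl$, typically linear, so that $R_{1,W}=\E[R_V(\Wl)/\Wl^2\mid\Wl>0]$ collapses to a single $\einv{}$ of a classical distribution (binomial, Poisson, or hypergeometric). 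No Cauchy--Schwarz, no bulk/tail split: the exact formulas \eqref{RP.eq.calc.R1.R2.Efr}--\eqref{RP.eq.calc.R1.R2.Rho} drop out directly.

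Consequently, the $(n\phl)^{-1/4}$ rate is \emph{not} produced inside this proof by any splitting argument; it is inherited verbatim from the upper bound $\einv{\mathcal{B}(n,p)} \leq 1 + \kappa_1(np)^{-1/4}$ of Lemma~\ref{RP.le.einv.binom} (proved in \cite{Arl:2008a}), and its analogues for the Poisson and hypergeometric cases. The conditioning on $\{\Wl>0\}$ contributes only an exponentially small correction (the factor $\Prob(\Wl>0)^{-1}$ in \eqref{RP.def.einfz}), not the polynomial rate. Your proposal would likely still work if pushed through, but the paper's conditioning-on-$\Wl$ trick is both shorter and what actually yields the precise bounds \eqref{RP.eq.delta.penEfr}--\eqref{RP.eq.delta.penLoo}.
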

Proposition~\ref{RP.pro.comp.Epen.Ep2} is proved in Section~\ref{RP.sec.calc.reech}.

\begin{remark}
Proposition~\ref{RP.pro.comp.Epen.Ep2} can also be generalized to Rho($q_n$) weights with $0 < B_- \leq q_n n^{-1} \leq B_+ < 1$, but the bound on $\delta_{n,\phl}^{(\mathrm{penW})}$ only holds for $n \phl \geq L(B_-,B_+)$ and $L_2$ depends on $B_-,B_+$ (see Section~\ref{RP.sec.calc.reech}).
\end{remark}

\begin{remark} \label{RP.rem.R1W.R2W} Combined with the explicit expressions of \CWinf\ for several resampling weights (Table~\ref{RP.TableR2}), Proposition~\ref{RP.pro.comp.Epen.Ep2} helps to understand several known results.
\begin{itemize}
\item In the maximum likelihood framework, Shibata \cite{Shi:1997} showed the asymptotical equivalence of two bootstrap penalization methods. The first penalty, denoted by $B_1$, is Efron's bootstrap penalty \cite{Efr:1983}, which is defined by \eqref{RP.def.penreech} with Efr weights. The second penalty, denoted by $B_2$, was proposed by Cavanaugh and Shumway \cite{Cav_Shu:1997}; it transposes \[ 2 \ph_1(m) = 2 \Es \croch{ P_n \paren{\gamma(\ERMb_m) - \gamma(\ERM_m)} } \] into the maximum likelihood framework. In the least-squares regression framework (with histogram models), the proofs of Propositions~\ref{RP.pro.EpenRP-Epenid} and~\ref{RP.pro.comp.Epen.Ep2} show that
\[ \El \croch{2 \ph_1(m)} = \frac{2}{n} \sum_{\lamm} R_{1,W}(n,\phl) \sigl^2 \approx \El \croch{\pen(m)} \] for several resampling schemes, including Efron's bootstrap (for which $\CWinf = 1$). The concentration results of Section~\ref{RP.sec.proof.conc} show that this remains true without expectations.
\item With Efron ($\mnefr$) weights (and a bootstrap selection procedure close to RP, but with $C=1$), Shao \cite{Sha:1996} showed that $\mnefr=n$ leads to an inconsistent model selection procedure for identification. On the contrary, when $\mnefr \rightarrow \infty$ and $\mnefr \ll n$, Shao's bootstrap selection procedure is model consistent. Proposition~\ref{RP.pro.comp.Epen.Ep2} shows that these assumptions on $\mnefr$ can be rewritten $C=1 \gg \CWinf = \mnefr / n$. Therefore, the rationale behind Shao's result may mostly be that identification needs overpenalization within a factor tending to infinity with $n$.
\end{itemize}
\end{remark}

\subsubsection{Concentration inequalities for resampling penalties}
From \eqref{eq.penid.histos}, the ideal penalty can be written
\[ \penid(m) = (P-P_n) \gamma\paren{\bayes_m} + \sum_{\lamm} \frac{\pl + \phl}{\paren{n \phl}^2} \paren{\sum_{X_i \in \Il} \paren{Y_i - \betl} }^2 . \]
Hence, $\penid(m)$ is a U-statistics of order 2 conditionally on $(\un_{X_i \in \Il})_{(i,\lamm)}$, which is sufficient to prove that resampling yields a consistent estimate of $\penid(m)$ (Arcones and Gin\'e \cite{Arc_Gin:1992} considered the bootstrap case; Hu\v{s}kov\'a and Janssen \cite{Hus_Jan:1993} extended it to the exchangeable weighted bootstrap).

In the non-asymptotic framework, that is when the models $S_m$ can depend on $n$, the following concentration inequality is needed.
\begin{proposition} \label{RP.pro.conc.penRP}
Let $\gamma>0$, $A_n \geq 2$ and $W$ be an exchangeable weight vector. Let $S_m$ be the model of histograms associated with some partition $(\Il)_{\lamm}$ of $\X$ and $\pen(m)$ be defined by~\eqref{RP.def.pen.his}. Assume that two positive constants $a_{\ell}$ and $\xi_{\ell}$ exist such that for every $q \geq 2$,
\begin{gather*}
\frac{\sqrt{D_m \sum_{\lamm} m_{q,\lambda}^4}}{\sum_{\lamm} m_{2,\lambda}^2 } \leq a_{\ell} q^{\xi_{\ell}} \quad \mbox{where} \quad m_{q,\lambda} \egaldef \left( \E \left[ \absj{Y -
\bayes_m(X)}^q \sachant X \in \Il \right] \right)^{1/q} .
\end{gather*}
Let $\Omega_m(A_n)$ denote the event $\set{ \min_{\lamm} \set{n \phl} \geq A_n}$.
Then, there exist constants $K_4,K_5>0$ and an event of probability at least $1 - K_4 n^{-\gamma}$ on which
\begin{equation*}
\begin{split}
 \absj{\pen(m) - \El \croch{\pen(m)} } \un_{\Omega_m(A_n)} \leq C K_5 \qquad \qquad \qquad \qquad \\
\times \sup_{np \geq A_n} \set{ R_{1,W}(n,p) + R_{2,W}(n,p) } \frac{\paren{\ln(n)}^{\xi_{\ell} + 1}}{\sqrt{A_n D_m}} \E\croch{p_2(m)}
 \end{split}
\end{equation*}
where $R_{1,W}$ and $R_{2,W}$ are defined by \eqref{RP.def.R1} and \eqref{RP.def.R2}. The constant $K_4$ is absolute and $K_5$ may only depend on $a_{\ell}$, $\xi_{\ell}$ and $\gamma$.

If moreover $W$ satisfies the assumptions of the second part of Proposition~\ref{RP.pro.comp.Epen.Ep2} and $\CWinf$ is defined as in Table~\ref{RP.TableR2}, then a constant $K_W>0$ exists such that
\begin{equation} \label{RP.eq.conc.penRP}
\begin{split}
 \absj{\pen(m) - \El [ \pen(m) ]} \un_{\Omega_m(A_n)} \leq \frac{C K_5 K_W \paren{\ln(n)}^{\xi_{\ell} + 1}}{\CWinf \sqrt{A_n D_m}} \E\croch{p_2(m)} 
 .
 \end{split}
\end{equation}
For the Rad$(p)$ weights, $K_W$ is smaller than $(1-p)^{-1}$ multiplied by an absolute constant. For the other weights, $K_W$ is an absolute constant.
\end{proposition}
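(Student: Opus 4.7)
The plan is to integrate out the resampling weights via exchangeability so that $\pen(m)$ becomes an explicit linear combination of within-cell empirical variances, and then to apply moment inequalities for sums of independent centered variables conditionally on the cell assignments. Writing $n\phl(\bethlW - \bethl) = \Wl^{-1}\sum_{X_i \in \Il}(W_i - \Wl)(Y_i - \betl)$ (using $\sum_{X_i \in \Il}(W_i - \Wl) = 0$), the exchangeability of $(W_i)_{X_i \in \Il}$ forces the cross-covariances $\Es[(W_i-\Wl)(W_j-\Wl)/\Wl^2 \mid \Wl > 0]$ to take only two values, linked by the linear constraint $\sum_{X_i \in \Il}(W_i-\Wl)=0$; summing the diagonal and off-diagonal contributions (and treating the second term in \eqref{RP.def.pen.his} analogously via $R_{2,W}$) yields, on $\Omega_m(A_n)$, the closed form
\[
\pen(m) = \frac{C}{n}\sum_{\lamm}\frac{R_{1,W}(n,\phl) + R_{2,W}(n,\phl)}{n\phl - 1}\, V_\lambda, \qquad V_\lambda \egaldef \sum_{X_i \in \Il}(Y_i - \bethl)^2 .
\]
Since $\El[V_\lambda] = (n\phl - 1)\sigl^2$, this both recovers Proposition~\ref{RP.pro.EpenRP-Epenid} and reduces the claim to a concentration statement for the centered variables $V_\lambda - \El[V_\lambda]$, which are independent across $\lamm$ conditionally on the cell assignments $(\un_{X_i \in \Il})_{i,\lamm}$.

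Each $V_\lambda - \El[V_\lambda]$ is a centered quadratic statistic in $n\phl$ i.i.d.\ variables with $L^q$-norms bounded by $m_{q,\lambda}$, so Rosenthal's inequality applied to the expansion $V_\lambda = \sum(Y_i - \betl)^2 - n\phl(\bethl - \betl)^2$ yields a cell-wise moment bound of the form $\|V_\lambda - \El[V_\lambda]\|_{L^q(\El)} \leq K\, q^{c_0}\sqrt{n\phl}\, m_{q,\lambda}^2$ for some absolute $c_0$. A second Rosenthal inequality applied to the weighted independent sum over $\lamm$, together with $n\phl \geq A_n$ on $\Omega_m(A_n)$, gives
\[
\|\pen(m) - \El[\pen(m)]\|_{L^q(\El)}\un_{\Omega_m(A_n)} \leq \frac{C K\,q^{c_0 + 1/2}}{\sqrt{A_n}}\,\sup_{np\geq A_n}\{R_{1,W}+R_{2,W}\}\,\sqrt{\sum_{\lamm} m_{q,\lambda}^4} .
\]
The moment hypothesis of the proposition bounds $\sqrt{D_m\sum_{\lamm} m_{q,\lambda}^4} \leq a_\ell\, q^{\xi_\ell}\sum_{\lamm} m_{2,\lambda}^2$, and $\sum_{\lamm} m_{2,\lambda}^2 = \sum_{\lamm}\sigl^2$ is (up to the factor $\sup\{R_{2,W}\}/n$) exactly $\E[p_2(m)]$ by the computation of $\El[\ph_2(m)]$ underlying Proposition~\ref{RP.pro.comp.Epen.Ep2}. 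Substituting produces the $L^q$ bound
\[
\|\pen(m) - \El[\pen(m)]\|_{L^q(\El)}\un_{\Omega_m(A_n)} \leq \frac{C K_5\, q^{\xi_\ell + c_0 + 1/2}\sup\{R_{1,W}+R_{2,W}\}}{\sqrt{A_n D_m}}\,\E[p_2(m)] .
\]

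Taking $q = \gamma'\ln n$ and applying Markov's inequality converts this $L^q$ bound into the first tail estimate of the proposition on an event of probability at least $1 - K_4 n^{-\gamma}$, the cumulated powers of $q$ yielding the final factor $(\ln n)^{\xi_\ell + 1}$. The sharper form \eqref{RP.eq.conc.penRP} then follows by invoking Table~\ref{RP.TableR2}: for each of the five weight schemes of Proposition~\ref{RP.pro.comp.Epen.Ep2}, $R_{1,W}(n,p) + R_{2,W}(n,p) \leq K_W/\CWinf$ uniformly over $np \geq A_n \geq 2$, with $K_W$ absolute except for $\textrm{Rad}(p)$, where it picks up a factor $(1-p)^{-1}$. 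The main technical obstacle is the combination of the quadratic cell-wise moment bound for $V_\lambda$ with the second Rosenthal estimate: one has to track constants through the two nested applications in order to produce \emph{simultaneously} the $1/\sqrt{D_m}$ improvement coming from the moment hypothesis and a polynomial-in-$q$ prefactor small enough that $q = \gamma'\ln n$ still leads to the target $(\ln n)^{\xi_\ell + 1}$ dependence.
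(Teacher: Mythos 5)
Your proposal is correct and follows essentially the same route as the paper: the closed form you re-derive for $\pen(m)$ is exactly \eqref{VFCV.eq.pen.Wech} (Lemma~\ref{VFCV.le.cal.pen.Wech}), and bounding moments of the resulting conditionally independent quadratic sum before converting to a tail bound via $q \propto \ln n$ and Markov is also the paper's strategy, which simply invokes a prepackaged conditional moment inequality for order-2 U-statistics (Lemma~5 of \cite{Arl:2008a}, applied with $a_\lambda = (R_{1,W}+R_{2,W})/(n(n\phl-1))$ and $b_\lambda = -(R_{1,W}+R_{2,W})/(n^2\phl(n\phl-1))$) in place of your nested-Rosenthal argument. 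The treatment of the second statement via uniform bounds on $R_{1,W}+R_{2,W}$ for $np\geq 2$, read off from the proof of Proposition~\ref{RP.pro.comp.Epen.Ep2}, matches the paper exactly.
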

Proposition~\ref{RP.pro.conc.penRP} is proved in Section~\ref{RP.sec.proof.pro.conc.penRP}.
Note that the moment condition holds under the assumptions of Theorem~\ref{RP.the.oracle_traj_non-as} as well as the alternative assumptions of Section~\ref{RP.sec.main.hyp.alt}. It is here stated in its most general form.
\begin{remark} \label{RP.rem.conc.penRP}
Since the $A_n^{-1/2}$ factor should tend to infinity with $n$ for most reasonable models, Proposition~\ref{RP.pro.conc.penRP} gives better bounds for resampling penalties than what could be obtained for ideal penalties with Proposition~\ref{RP.pro.conc.penid} in the same framework.

Although we do not know how tight are the bounds of Proposition~\ref{RP.pro.conc.penRP}, such a phenomenon is classical with bootstrap and can be understood from the asymptotic point of view through Edgeworth expansions \cite{Hal:1992}. In a non-asymptotic Gaussian framework, \cite[Section~2.3]{Arl_Bla_Roq:2008:RC} shows the same property for resampling estimators, which concentrate at the rate $N^{-1}$ instead of $N^{-1/2}$ ($N$ being the amount of data).
Since $A_n$ plays the role of $N$, the gain $A_n^{-1/2}$ can reasonably be conjectured to be unimprovable without some more assumptions.

Let us emphasize that if resampling penalties estimate $\E\croch{\penid(m)}$ instead of $\penid(m)$, RP with $C=\CWinf$ cannot take into account the fact that $\penid(m)$ may be far from its expectation.
\end{remark}

\subsubsection{Expectations of inverses}\label{RP.sec.app.einv}
For any non-negative random variable $Z$, we define
\[ \einv{Z} = \einv{\loi(Z)} \egaldef \E \croch{Z} \E \croch{ Z^{-1} \sachant Z>0} . \]
This quantity appears in the explicit formulas for $R_{1,W}$ when $W$ is among the examples of resampling weights of Section~\ref{sec.cadreRP.heur} (see Lemma~\ref{RP.le.calc.R1.R2}).
Therefore, in order to prove Proposition~\ref{RP.pro.comp.Epen.Ep2}, non-asymptotic bounds on $\einv{Z}$ are needed when $Z$ has a binomial, hypergeometric or Poisson distribution.

Former results concerning $\einv{Z}$ can be found in papers by Lew \cite{1976:Lew} (for general $Z$), by Jones and Zhigljavsky \cite{2004:Jon_Zhi} (for the Poisson case) and by \v{Z}nidari\v{c} \cite{2005:Zni} (for the binomial and Poisson case), but they are either asymptotic or not precise enough.
Lemmas~\ref{RP.le.einv.binom}--\ref{RP.le.einv.Poi} solve this issue.

In the rest of the paper, for any $a,b \in \R$, $\mini{a}{b}$ denotes the minimum of $a$ and $b$ and $\maxi{a}{b}$ denotes the maximum of $a$ and $b$.

\paragraph{Binomial case}
\begin{lemma} \label{RP.le.einv.binom}
For any $n \in \N \backslash\set{0}$ and $p \in (0;1]$, $\mathcal{B}(n,p)$ denotes the binomial distribution with parameters $(n,p)$, $\kappa_1 \egaldef 5.1$ and $\kappa_2 \egaldef 3.2$. Then, if $np \geq 1$,
\begin{gather} \label{RP.eq.einv-binom.inf-sup}
\mini{ \kappa_2 }{ \parenb{ 1+\kappa_1 (np)^{-1/4} } } \geq \einv{\mathcal{B}(n,p)} \geq 1 - e^{-np} \\
\mbox{and} \qquad
\label{aRP.eq.einv.sym}
2 + 3 \times 10^{-4} \geq \einv{\mathcal{B}\paren {n, \frac{1}{2} } } \geq \un_{n \geq 3} . \end{gather}
\end{lemma}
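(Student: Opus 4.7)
\medskip
\noindent\textbf{Proof sketch.} I would split the lemma into the general-$p$ bounds (first display) and the sharper $p=1/2$ bounds (second display), and within each into a lower and an upper bound.

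For the general lower bound $\einv{\mathcal{B}(n,p)}\geq 1-e^{-np}$, Jensen's inequality applied to the convex map $x\mapsto 1/x$ conditionally on $\{Z>0\}$ gives
\[
\E\croch{Z^{-1}\sachant Z>0}\geq \frac{1}{\E\croch{Z\sachant Z>0}}=\frac{P(Z>0)}{\E[Z]},
\]
so multiplying by $\E[Z]=np$ yields $\einv{Z}\geq P(Z>0)=1-(1-p)^n\geq 1-e^{-np}$. This step is immediate.

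For the general upper bound, the key tool is the classical identity
\[
\E\croch{(Z+1)^{-1}}=\frac{1-(1-p)^{n+1}}{(n+1)p},
\]
obtained by rewriting $\binom{n}{k}/(k+1)=\binom{n+1}{k+1}/(n+1)$ and summing. Since $1/k\leq 2/(k+1)$ on $\{k\geq 1\}$, one deduces
\[
\einv{Z}=\E[Z]\,\frac{\E\croch{Z^{-1}\un_{Z\geq 1}}}{P(Z\geq 1)}\leq \frac{2n}{n+1}\cdot\frac{1-(1-p)^{n+1}}{1-(1-p)^n}.
\]
Under $np\geq 1$ one has $(1-p)^n\leq e^{-1}$, which gives the uniform upper bound $2e/(e-1)<3.164<\kappa_2=3.2$. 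To obtain the sharper rate $1+\kappa_1(np)^{-1/4}$, I would split at a threshold $(1-\delta)np$ with $\delta=c(np)^{-1/4}$: on $\{Z\geq(1-\delta)np\}$ use $1/Z\leq 1/((1-\delta)np)$ directly, and on $\{0<Z<(1-\delta)np\}$ use $1/Z\leq 1$ combined with the multiplicative Chernoff bound $P(Z<(1-\delta)np)\leq \exp(-\delta^2 np/2)$. Dividing by $P(Z>0)\geq 1-e^{-np}$ and optimising $c$ produces a bound of the form $1+O((np)^{-1/4})$; tuning the constants to reach exactly $\kappa_1=5.1$ is a routine but delicate numerical optimisation. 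Finally, taking the minimum with $\kappa_2$ handles the regime where $(np)^{-1/4}$ is not small.

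For the symmetric bounds in \eqref{aRP.eq.einv.sym}, the previous upper bound specialises to
\[
\einv{\mathcal{B}(n,1/2)}\leq \frac{2n}{n+1}\cdot\frac{1-(1/2)^{n+1}}{1-(1/2)^n}=2-\frac{2}{n+1}+\frac{(1/2)^n}{1-(1/2)^n}+o(1),
\]
which is already bounded by $2$ for every $n\geq 3$; the small slack $3\cdot 10^{-4}$ only absorbs losses coming from using uniform instead of $n$-dependent estimates of the correction. For the lower bound $\einv\geq 1$ on $\{n\geq 3\}$, Jensen alone is not sharp enough, and I would use the exact second-order identity
\[
\frac{1}{Z}=\frac{1}{\mu}-\frac{Z-\mu}{\mu^2}+\frac{(Z-\mu)^2}{Z\mu^2},
\]
with $\mu=n/2$. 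Taking expectations conditional on $\{Z\geq 1\}$ and multiplying by $\mu$,
\[
\einv{Z}=1-\frac{P(Z=0)}{P(Z\geq 1)}+\frac{2}{n}\,\E\crochB{\frac{(Z-n/2)^2}{Z}\,\Big|\,Z\geq 1},
\]
and showing that the variance-type term dominates the bias $(1/2)^n/(1-(1/2)^n)$ for all $n\geq 3$. The variance term is $\sim 1/n$ asymptotically while the bias is exponentially small, so the inequality holds for $n$ larger than some explicit $n_0$; the finite list $n\in\{3,\ldots,n_0\}$ is settled by direct numerical evaluation.

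The main obstacle will be tracking all the multiplicative constants sufficiently tightly to hit the prescribed values $\kappa_1=5.1$, $\kappa_2=3.2$, and $2+3\cdot 10^{-4}$; the analytical structure of the argument (Jensen for below, combinatorial identity plus Chernoff for above) is otherwise standard.
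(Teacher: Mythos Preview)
Your proposal is correct and matches the paper's proof closely. The lower bound via Jensen and the upper bounds via the combinatorial identity $\E[(Z+1)^{-1}]=(1-(1-p)^{n+1})/((n+1)p)$ together with a threshold--Chernoff split are exactly the tools the paper uses (the latter is the content of the general bound \eqref{RP.eq.einv.major.gal}, and the $(Z+1)^{-1}$ bound is cited there as \cite[Lemma~4.1]{Gyo_etal:2002}).

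The one genuine difference is the lower bound at $p=1/2$. You expand $1/Z$ as $1/\mu - (Z-\mu)/\mu^2 + (Z-\mu)^2/(Z\mu^2)$ and compare the variance-type term to the bias $P(Z=0)/P(Z>0)$. The paper instead exploits the distributional symmetry $Z\stackrel{d}{=}n-Z$ to pair $1/Z$ with $1/(n-Z)$, obtaining the exact formula
\[
\einv{Z}=\frac{p_0}{2(1-p_0)}+\frac{1-2p_0}{1-p_0}\Bigl(1+\frac{n}{2}\,\E\Bigl[\frac{(Z-n/2)^2}{Z(n-Z)}\,\Big|\,0<Z<n\Bigr]\Bigr),
\]
and then lower bounds the expectation by the contribution from $Z\in\{1,n-1\}$ alone, which already yields $\einv{Z}\geq 1$ for every $n\geq 3$ without any numerical check. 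Your decomposition works too (indeed, keeping only $Z\in\{1,n\}$ in your variance term gives $\frac{2}{n}\cdot 2^{-n}\bigl(\frac{(n-2)^2}{4}+\frac{n}{4}\bigr)\geq 2^{-n}$ for all $n\geq 3$, so you do not actually need the finite numerical check you mention). The symmetric pairing simply makes the algebra a bit cleaner.
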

The first bounds \eqref{RP.eq.einv-binom.inf-sup} were first stated in \cite[Lemma~3]{Arl:2008a} where they are proved. The second ones \eqref{aRP.eq.einv.sym} are proved in Section~\ref{aRP.sec.einv.sym}.
Lemma~\ref{RP.le.einv.binom} implies in particular that $\einv{\mathcal{B}(n,p)} \rightarrow 1$ when $np \rightarrow \infty$, which can be derived from \cite{2005:Zni}.

\paragraph{Hypergeometric case}
Recall that an hypergeometric random variable $X \sim \mathcal{H}(n,r,q)$ is defined by \[ \forall k \in \set{0, \ldots, \mini{q}{r}} , \quad \Prob(X=k) = \frac{\binom{r}{k} \binom{n-r}{q-k}} {\binom{n}{q}} .\]
\begin{lemma} \label{RP.le.einv.hypergeom} Let $n,r,q\in \N$ such that $n \geq r \geq 1$ and $n \geq q \geq 1$.
\begin{enumerate}
\item General lower-bound:
\begin{equation*} 
\einv{\mathcal{H}(n,r,q)} \geq 1 - \un_{r \leq n-q} \exp \left( - \frac{qr}{n} \right) .
\end{equation*}
\item General upper-bound:
let $\epsilon \in (0;1)$ and $\kappa_3(\epsilon) \egaldef 0.9 + 1.4 \times \epsilon^{-2}$. \begin{align} \notag &\mbox{If} \qquad r\geq 2 \quad \mbox{and} \quad \frac{n}{q} \leq (1 - \epsilon) \frac{2r}{2 + \sqrt{3 (r+1) \ln(r) }} \\
\label{RP.eq:einv_hypergeom:maj_non-asympt}
&\mbox{Then,} \qquad 1 + \kappa_3(\epsilon) \frac{n}{q} \sqrt{ \frac{\ln(r)}{r} } \geq \einv{\mathcal{H}(n,r,q)} .
\end{align}
\item ``Rho'' case: if $n \geq 2$,
\begin{equation}
\label{RP.eq.einv.hypergeom.sup}
14.3 \geq \sup_{r \geq 1} \set{ \einv{\mathcal{H}(n,r,\lfloor \frac{n}{2} \rfloor)} } \qquad \mbox{and} \qquad 3 \geq \sup_{r \geq 26}
\set{ \einv{\mathcal{H}(n,r,\lfloor \frac{n}{2} \rfloor)} } .
\end{equation}
\item ``Loo'' case:
\begin{equation}
\label{RP.eq.einv.Loo}
\hspace{-0.5cm} 1 + \frac{\un_{r \geq 2}}{n (r-1)} \geq \einv{\mathcal{H}(n,r,n-1)} = 1 + \frac{1}{n} \paren{ \frac{(n-1)r}{n(r-1)} \un_{r \geq 2} - 1} \geq 1 - \frac{\un_{r = 1}}{n} .
\end{equation}
\item ``Lpo'' case: if $n \geq r \geq n-q+1 \geq 2$,
\begin{equation*}
\frac{r}{r-n+q} \times \frac{n^{n-q}} {n (n-1)\cdots (q+1)} \geq \einv{\mathcal{H}(n,r,q)} \geq 1 .
\end{equation*}
\end{enumerate}
\end{lemma}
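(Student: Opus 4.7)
The five parts use quite different techniques, so I would handle them separately. For \textbf{Part 1}, conditional Jensen gives $\E[X^{-1}\mid X>0] \geq P(X>0)/\E[X]$, hence $\einv{\mathcal{H}(n,r,q)} \geq P(X>0) = 1 - P(X=0)$; when $r \leq n-q$, I would write $P(X=0) = \prod_{j=0}^{q-1}(1 - r/(n-j))$ and apply $\ln(1-u) \leq -u$ together with $r/(n-j) \geq r/n$ to conclude $P(X=0) \leq e^{-qr/n}$, while $P(X=0) = 0$ otherwise. \textbf{Part 4} is a direct calculation: when $q = n-1$ the random variable $X$ takes only the values $r$ or $r-1$ (depending on whether the single omitted element is a success), with probabilities $(n-r)/n$ and $r/n$ respectively; $\E[X^{-1}\mid X>0]$ and $\E[X] = r(n-1)/n$ are then computed in closed form, yielding exactly $\einv{\mathcal{H}(n,r,n-1)} = 1 + (n-r)/\croch{n^2(r-1)}$ for $r \geq 2$ and $\einv{\mathcal{H}(n,1,n-1)} = 1 - 1/n$, from which the stated bounds follow. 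For \textbf{Part 5}, the assumption $r \geq n-q+1$ forces $X \geq r-n+q \geq 1$ almost surely, so $X^{-1} \leq (r-n+q)^{-1}$ pointwise and $\einv{\mathcal{H}(n,r,q)} \leq qr/\croch{n(r-n+q)}$; the stated (weaker) bound then follows from the elementary inequality $q/n \leq n^{n-q}/\croch{n(n-1)\cdots(q+1)}$ (the denominator being a product of $n-q$ integers each at most $n$), and the lower bound $\einv \geq 1$ is unconditional Jensen.

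\textbf{Part 2} is the main technical step. Writing $\mu = qr/n$ and using the exact identity, valid whenever $X>0$,
\begin{equation*}
\frac{1}{X} = \frac{1}{\mu} - \frac{X-\mu}{\mu^2} + \frac{(X-\mu)^2}{\mu^2 X} ,
\end{equation*}
I would take $\E\crochb{\cdot \, \un_{X>0}}$ and multiply by $\mu$; since $\E[X-\mu]=0$ implies $\E\crochb{(X-\mu)\un_{X>0}} = \mu P(X=0)$, this yields
\begin{equation*}
\mu \E\crochb{X^{-1}\un_{X>0}} = 1 - 2 P(X=0) + \mu^{-1} \E\crochb{(X-\mu)^2 \un_{X>0}/X} ,
\end{equation*}
and dividing by $P(X>0)$ gives $\einv{\mathcal{H}(n,r,q)}$. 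The remainder is controlled by splitting $\set{X>0}$ into $\set{X \geq \mu/2}$ (where $(X-\mu)^2/X \leq 2(X-\mu)^2/\mu$, contributing at most $2\var(X)/\mu^2 = 2(n-r)(n-q)/\croch{(n-1)qr}$) and $\set{0 < X < \mu/2}$ (where $X^{-1} \leq 1$ and $(X-\mu)^2 \leq \mu^2$, yielding at most $\mu P(X<\mu/2)$, which decays exponentially fast via a Bernstein- or Serfling-style hypergeometric concentration inequality). The assumption $n/q \leq (1-\epsilon)\, 2r/\paren{2+\sqrt{3(r+1)\ln(r)}}$ is designed precisely so that $\mu$ is large enough for the concentration tail to be absorbed into the variance term, producing the stated rate $(n/q)\sqrt{\ln(r)/r}$.

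\textbf{Part 3} (the Rho case $q = \lfloor n/2 \rfloor$, where $n/q$ is close to $2$) would then follow by applying Part 2 with some fixed $\epsilon$ chosen so its hypothesis holds for $r$ beyond an explicit threshold, and handling the remaining small-to-moderate values of $r$ by a crude direct estimate—for example coupling with a symmetric binomial and invoking \eqref{aRP.eq.einv.sym}—with optimization over $\epsilon$ and over the threshold delivering the uniform constants $14.3$ and $3$. The \emph{main obstacle} throughout is Part 2: the explicit constants $\kappa_3(\epsilon) = 0.9 + 1.4\,\epsilon^{-2}$ force each ingredient—the hypergeometric concentration inequality, the variance estimate, the split threshold $\mu/2$ (or more flexibly $\mu(1-\delta)$ with $\delta$ tuned against $\epsilon$), and the correction from $P(X=0)$ in the denominator—to be tracked with sharp numerical factors, since any looseness propagates directly into $\kappa_3$. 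This constant-chasing, rather than the qualitative structure of the proof, is where the real effort lies.
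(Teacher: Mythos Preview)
Your treatments of Parts~1, 4, and~5 are correct. Part~5 is actually cleaner than the paper's: the paper writes out the sum $\sum_k \binom{r}{k}\binom{n-r}{p-k}/[(r-k)\binom{n}{p}]$ explicitly and bounds term by term, whereas your pointwise bound $X^{-1}\le (r-n+q)^{-1}$ gets there in one line.

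For \textbf{Part~2} the paper takes a considerably simpler route than your second-order expansion. Because a hypergeometric variable is integer-valued, $\{X>0\}=\{X\ge 1\}$, so one may use the crude splitting
\[
\einvz{Z}\;\le\;\mu\,\Prob(0<Z<\alpha\mu)\;+\;\alpha^{-1}
\]
(the first term uses $Z^{-1}\le 1$, the second uses $Z^{-1}\le(\alpha\mu)^{-1}$), then plug in the Hush--Scovel hypergeometric concentration inequality and optimize over the cutoff by setting $\alpha=1-n\beta/q$ with $\beta=\bigl(1+\sqrt{\tfrac34(r+1)\ln r}\bigr)/r$. No variance computation or identity $1/X=1/\mu-(X-\mu)/\mu^2+(X-\mu)^2/(\mu^2 X)$ is needed. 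Your approach would also work and could in principle give sharper constants, but the paper's version makes the bookkeeping leading to $\kappa_3(\epsilon)=0.9+1.4\,\epsilon^{-2}$ much shorter.

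For \textbf{Part~3} your plan has a gap. The paper does \emph{not} invoke the final statement of Part~2 (whose hypothesis on $n/q$ fails for small $r$); instead it reuses the raw intermediate inequality above with ad~hoc choices of $\beta$ ($\beta=2/r$ for $6\le r\le 9$, $\beta=1/4+1/r$ for $r\ge 10$) and reads off numerical values case by case. The very small cases $r\le 5$ are handled by an explicit bound on the finite sum, using $n!/(n-r)!\ge (r+1)!\,n^r/(r+1)^r$; no coupling with a symmetric binomial is used, and your appeal to \eqref{aRP.eq.einv.sym} would not obviously deliver the constant $14.3$.
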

Lemma~\ref{RP.le.einv.hypergeom} is proved in Section~\ref{aRP.sec.einv.hypergeom}.
It implies in particular that
\[ \einv{\mathcal{H}(n_k,r_k,q_k)} \xrightarrow[k \rightarrow \infty]{} 1 \quad \mbox{if }
n_k \geq r_k \xrightarrow[k \rightarrow \infty]{} + \infty\] and $\sup_k \set{n_k q_k^{-1}} < + \infty$.

\paragraph{Poisson case}
\begin{lemma}\label{RP.le.einv.Poi}
For every $\mu>0$, $\mathcal{P}(\mu)$ denotes the Poisson distribution with parameter $\mu$. Then,
\begin{equation*} 
\minipar{2 - 2 e^{-2 \mu}} {1 + \frac{2(1+e^{-3}) } {(\mu - 2) \un_{\mu > 2}}} \geq \einv{\mathcal{P}(\mu)} \geq 1 - \un_{\mu < 1.61} e^{-\mu}
. \end{equation*}
\end{lemma}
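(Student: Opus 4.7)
The plan rests on the exact identity
$$\einv{\mathcal{P}(\mu)} = \frac{\mu\,\E[Z^{-1}\un_{Z\geq 1}]}{1-e^{-\mu}}\qquad (Z\sim\mathcal{P}(\mu)),$$
coupled with the Stein relation $\mu\,\E[g(Z)]=\E[Z\,g(Z-1)]$ for Poisson. Applied to $g(z)=1/(z+1)$ this yields $\E[(Z+1)^{-1}]=(1-e^{-\mu})/\mu$ exactly; applied to $g(z)=(\mu-z)^2/(z+1)$ it yields
$$\E\!\left[\frac{(\mu-Z)^2}{Z+1}\right]=\frac{(\mu+1)-(\mu+1)^2 e^{-\mu}}{\mu}\leq\frac{\mu+1}{\mu}.$$
Both identities drive the whole argument.

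\emph{Lower bound.} Jensen's inequality applied to the law of $Z$ conditional on $Z\geq 1$ gives $\E[Z^{-1}\mid Z\geq 1]\geq 1/\E[Z\mid Z\geq 1]=(1-e^{-\mu})/\mu$, hence $\einv{\mathcal{P}(\mu)}\geq 1-e^{-\mu}$, which is the claimed bound on the range $\mu<1.61$. For $\mu\geq 1.61$ I need the sharper conclusion $\einv{\mathcal{P}(\mu)}\geq 1$: shifting indices in the sum defining $\mu\,\E[Z^{-1}\un_{Z\geq 1}]$ via $\mu\Prob(Z=k)=(k+1)\Prob(Z=k+1)$ gives
$$\mu\,\E[Z^{-1}\un_{Z\geq 1}]-(1-e^{-\mu})=-\mu e^{-\mu}+\sum_{j\geq 2}\frac{\Prob(Z=j)}{j-1},$$
and retaining only the $j=2,3$ terms of this positive series reduces the claim to the polynomial inequality $\mu^2/2+\mu^3/12\geq\mu$, i.e.\ $\mu^2+6\mu-12\geq 0$, which holds for $\mu\geq -3+\sqrt{21}\approx 1.583$ and a fortiori for $\mu\geq 1.61$.

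\emph{First upper bound.} The pointwise inequality $Z^{-1}\leq 2(Z+1)^{-1}$, valid on $\{Z\geq 1\}$, combined with the exact formula for $\E[(Z+1)^{-1}]$, yields $\mu\,\E[Z^{-1}\un_{Z\geq 1}]\leq 2(1-e^{-\mu})-2\mu e^{-\mu}$, hence $\einv{\mathcal{P}(\mu)}\leq 2-2\mu e^{-\mu}/(1-e^{-\mu})$. The conclusion $\einv{\mathcal{P}(\mu)}\leq 2-2e^{-2\mu}$ reduces to $h(\mu):=\mu+e^{-2\mu}-e^{-\mu}\geq 0$, which follows from $h(0)=0$ together with the factorization $h'(\mu)=1+e^{-\mu}-2e^{-2\mu}=(1-e^{-\mu})(1+2e^{-\mu})\geq 0$ obtained by substituting $u=e^{-\mu}\in(0,1]$.

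\emph{Second upper bound.} This is the main obstacle: I must produce a bound of order $1/\mu$ rather than $O(1)$. My strategy is the second-order decomposition $1/k=1/\mu+(\mu-k)/(\mu k)$; multiplying by $(\mu-k)$ and summing against $\Prob(Z=k)$ for $k\geq 1$, a direct calculation yields
$$\einv{\mathcal{P}(\mu)}-1=\frac{-e^{-\mu}+\mu^{-1}\,\E\bigl[(\mu-Z)^2 Z^{-1}\un_{Z\geq 1}\bigr]}{1-e^{-\mu}}.$$
Dropping the non-positive $-e^{-\mu}$ and applying $Z^{-1}\leq 2(Z+1)^{-1}$ on $\{Z\geq 1\}$ bounds the second-moment term by $2\E[(\mu-Z)^2/(Z+1)]\leq 2(\mu+1)/\mu$ thanks to the Stein computation, so $\einv{\mathcal{P}(\mu)}-1\leq 2(\mu+1)/\bigl(\mu^2(1-e^{-\mu})\bigr)$. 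The target then reduces to the elementary inequality $(\mu+1)(\mu-2)\leq(1+e^{-3})(1-e^{-\mu})\mu^2$ for $\mu>2$, which I verify by showing that $g(\mu):=(1+e^{-3})(1-e^{-\mu})-1+1/\mu+2/\mu^2$ satisfies $g(2)=1+e^{-3}-e^{-2}-e^{-5}>0$, is monotonically decreasing on $[2,\infty)$ (the inequality $(1+e^{-3})\mu^3 e^{-\mu}\leq\mu+4$ that forces $g'\leq 0$ holds at $\mu=2$ and persists for $\mu\geq 2$ by comparing derivatives), and tends to $e^{-3}>0$ as $\mu\to\infty$, so remains strictly positive throughout.
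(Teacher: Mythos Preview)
Your proof is correct, but it follows a genuinely different route from the paper's. The paper expresses $g(\mu)=\einv{\mathcal{P}(\mu)}$ via the integral representation
\[
g(\mu)=\frac{\mu}{e^{\mu}-1}\int_0^{\mu}\frac{e^x-1}{x}\,dx
\]
and derives the differential inequation $1-g\leq g'\leq 1-g/2$ (from $1/2\leq a(\mu)\leq 1$ in the ODE $g'=1-a\,g$); both upper bounds and the propagation of $g(1.61)\geq 1$ to all $\mu\geq 1.61$ then follow by integrating these inequalities. Your argument instead stays entirely at the level of Poisson moment identities: the pointwise bound $Z^{-1}\leq 2(Z+1)^{-1}$ on $\{Z\geq 1\}$ together with the exact formula $\E[(Z+1)^{-1}]=(1-e^{-\mu})/\mu$ gives the first upper bound, while the second-order decomposition $Z^{-1}=\mu^{-1}+(\mu-Z)/(\mu Z)$ reduces the $1+O(1/\mu)$ bound to controlling $\E[(\mu-Z)^2/(Z+1)]$, which you compute exactly via the Stein relation $\mu\E[g(Z)]=\E[Zg(Z-1)]$. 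For the lower bound at $\mu\geq 1.61$ you truncate an explicit series rather than invoking ODE monotonicity. Your approach is more elementary (no differential equations) and makes the role of the Poisson Stein identity transparent; the paper's ODE approach is more unified, with all four bounds flowing from the single inequality $1/2\leq a\leq 1$. One minor point: in your monotonicity argument for $g$ in the second upper bound, ``comparing derivatives'' deserves one more line---you should note that $L'(\mu)=(1+e^{-3})\mu^2(3-\mu)e^{-\mu}$ is decreasing on $[2,3]$ (since $\mu^2-6\mu+6<0$ there) and nonpositive on $[3,\infty)$, hence $L'(\mu)\leq L'(2)<1=R'(\mu)$ throughout.
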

Lemma~\ref{RP.le.einv.Poi} is proved in Section~\ref{aRP.sec.einv.poisson}.
It implies in particular that $\einv{\mathcal{P}(\mu)} \rightarrow 1$ when $\mu \rightarrow \infty$, which can be derived from \cite{2004:Jon_Zhi,2005:Zni}.

\section{Comparison of the weights} \label{RP.sec.calc.delta}
We investigate in this section how the loss of the final estimator may depend on the distribution of the exchangeable weight vector $W$. First, we consider in Section~\ref{sec.calc.delta.classic} the most classical ones, that is Efr, Rad, Poi, Rho and Loo. Then, we discuss in Section~\ref{sec.Thm1.autresW} whether Theorem~\ref{RP.the.oracle_traj_non-as} can be extended to general exchangeable weights.

\subsection{Comparison of the classical weights} \label{sec.calc.delta.classic}

According to Theorem~\ref{RP.the.oracle_traj_non-as}, any resampling scheme among Efr, Rad, Poi, Rho and Loo leads to an asymptotically optimal procedure. Even from the non-asymptotic point of view, it is not quite clear to distinguish between these weights with the results of Section~\ref{RP.sec.main}. Indeed, the resampling penalties are equal in expectation at first order (Proposition~\ref{RP.pro.comp.Epen.Ep2}), and their deviations are negligible in front of their expectations (Proposition~\ref{RP.pro.conc.penRP}).

Therefore, differences between these weights can only come from second-order terms, either in the expectations or in the sizes of the deviations of resampling penalties.
As a first step, we compare in this subsection second-order terms in the expectations of the penalties (that is, differences between second-order terms in \eqref{RP.eq.Ep2.Epenid} and \eqref{RP.eq.comp.Epen.Ep2}), for a fixed sample size.
Asymptotic considerations can be found in the book by Barbe and Bertail \cite[Chapter~2]{Bar_Ber:1995} where Edgeworth expansions are used to compare the accuracy of estimation with many exchangeable weights. The asymptotic results mentioned in Section~\ref{RP.sec.app.einv} may also be useful.


Propositions~\ref{RP.pro.EpenRP-Epenid} and~\ref{RP.pro.comp.Epen.Ep2} show that $\penid(m)$ and $\pen(m)$ have the same expectation, up to the small terms $\delta_{n,\pl}$ and $\delta_{n,\phl}^{(\mathrm{penW})}$. More precisely,
\begin{gather*}
\E \croch{\pen(m) - \penid(m)} = \frac{1}{n} \sum_{\lamm} \paren{\overline{\delta}_{n,\pl}^{(\mathrm{penW})} - \delta_{n,\pl}} \carre{\sigl} \\
\mbox{with} \qquad \overline{\delta}_{n,\pl}^{(\mathrm{penW})} \egaldef \E\croch{\delta_{n,\phl}^{(\mathrm{penW})} \sachant \phl >0} .\end{gather*}
Using the explicit expressions of $\delta_{n,p}$ and $\delta_{n,\phl}^{(\mathrm{penW})}$, $\delta_{n,p}$ and $\overline{\delta}_{n,p}^{(\mathrm{penW})}$ have been computed numerically as a function of $np$ for several resampling schemes, with $n=200$. The results are given on Figures~\ref{RP.fig.delta.efr}--\ref{RP.fig.delta.rad} (with straight lines for $\delta_{n,p}$ and dots for $\overline{\delta}_{n,p}^{(\mathrm{penW})}$).

\begin{figure}[t!]
\begin{minipage}[b]{.48\linewidth}
\centerline{\epsfig{file=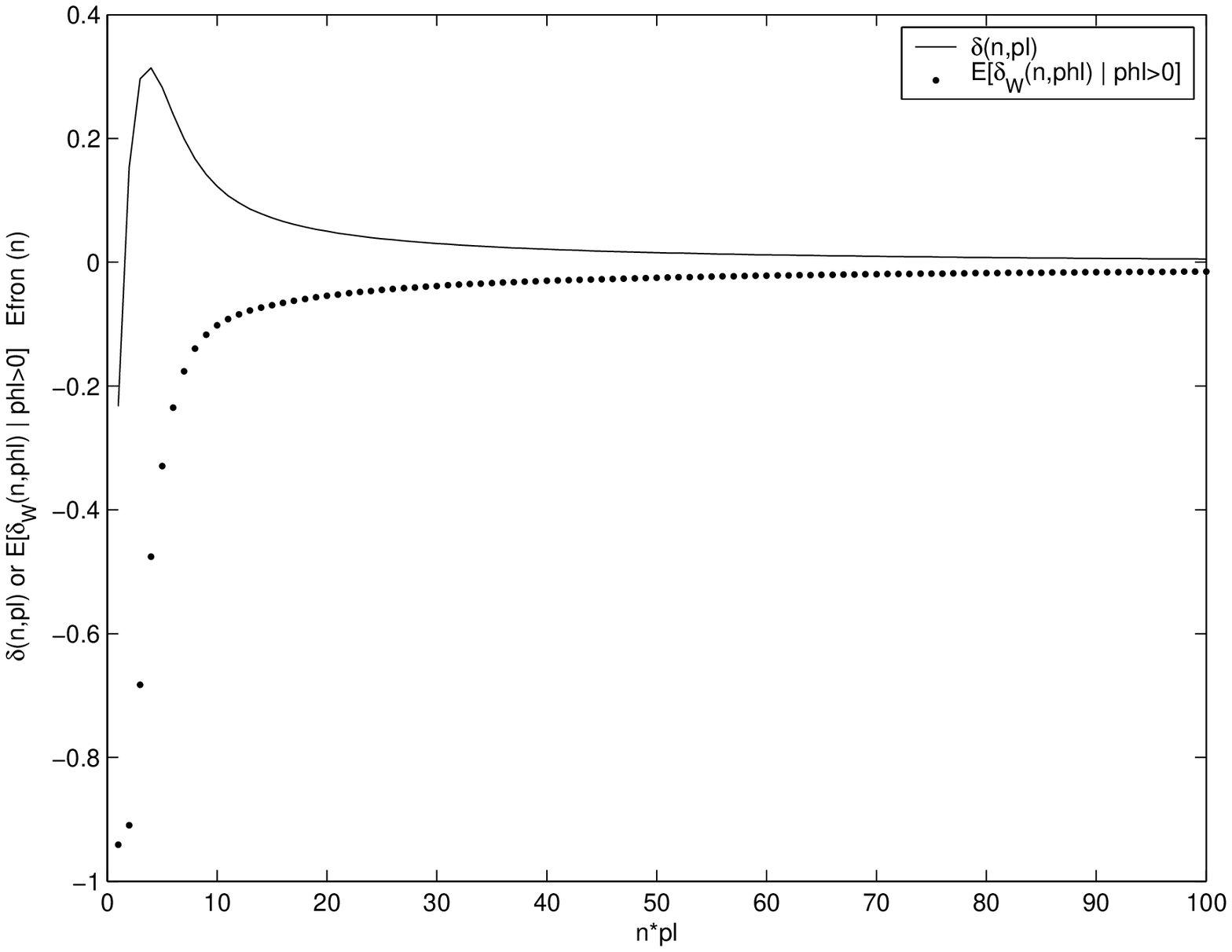,width=\textwidth}}
\vspace*{-3pt}
\caption{$\delta_{n,p}>0>\overline{\delta}_{n,p}^{(\mathrm{penEfr(n)})}$. \label{RP.fig.delta.efr}}
\end{minipage} \hfill
\begin{minipage}[b]{.48\linewidth}
\centerline{\epsfig{file=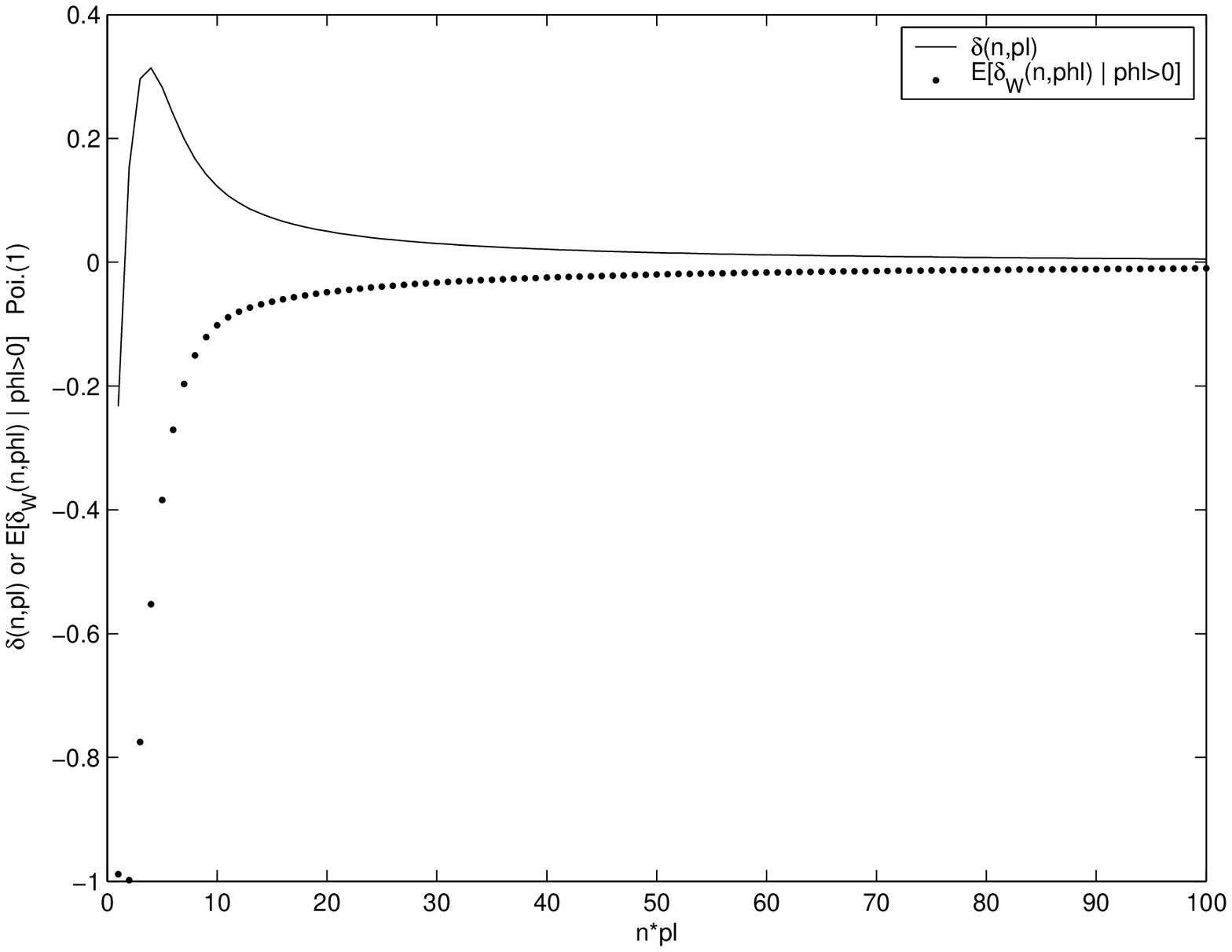,width=\textwidth}}
\vspace*{-3pt}
   \caption{$\delta_{n,p} > 0 > \overline{\delta}_{n,p}^{(\mathrm{penPoi}(1))}$. \label{RP.fig.delta.poi}}
\end{minipage}

\medskip

\begin{minipage}[b]{.48\linewidth}
\centerline{\epsfig{file=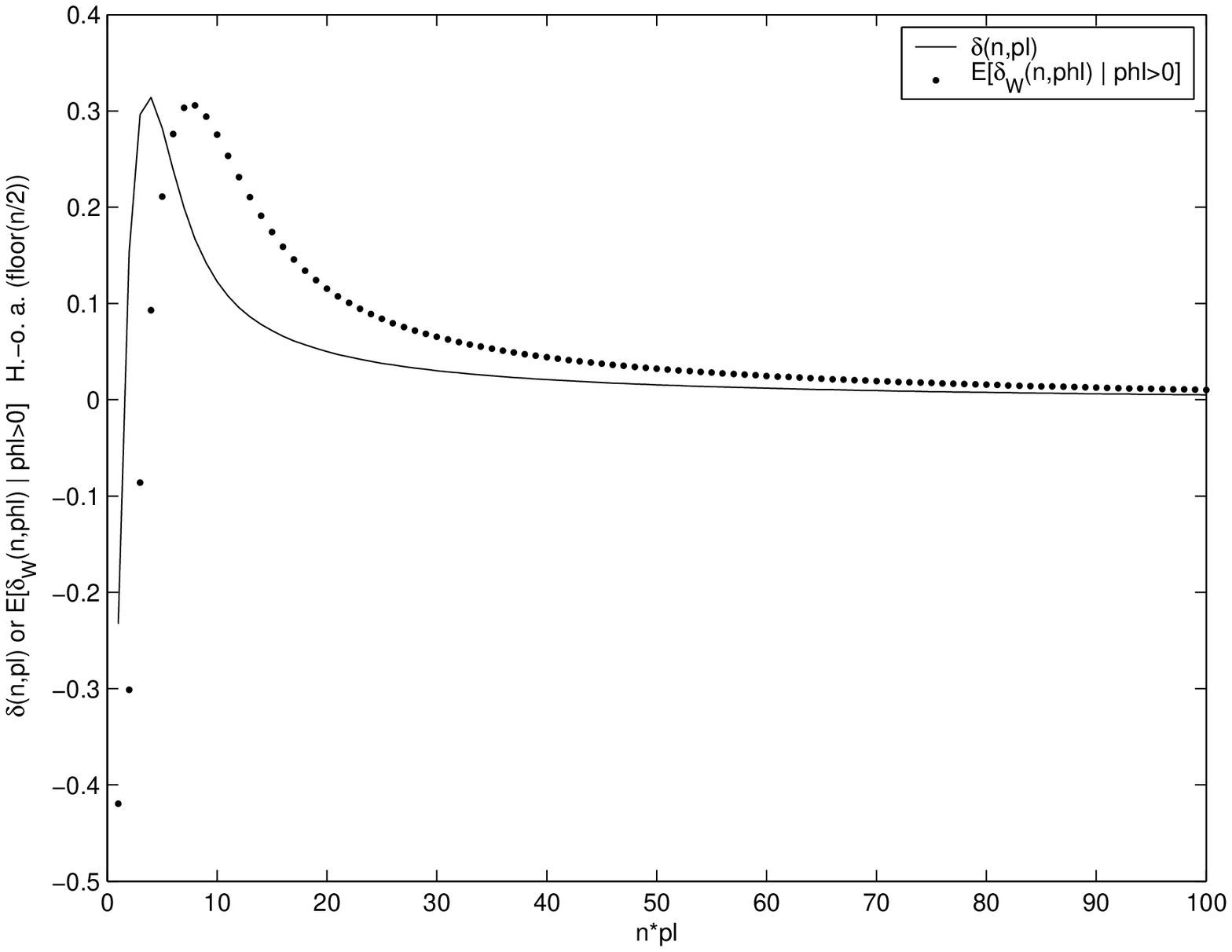,width=\textwidth}}
\vspace*{-3pt}
   \caption{$\delta_{n,p} > \overline{\delta}_{n,p}^{(\mathrm{penRho}(n/2))}$ for $np \geq 6$. \label{RP.fig.delta.hoa}}
\end{minipage} \hfill
\begin{minipage}[b]{.48\linewidth}
\centerline{\epsfig{file=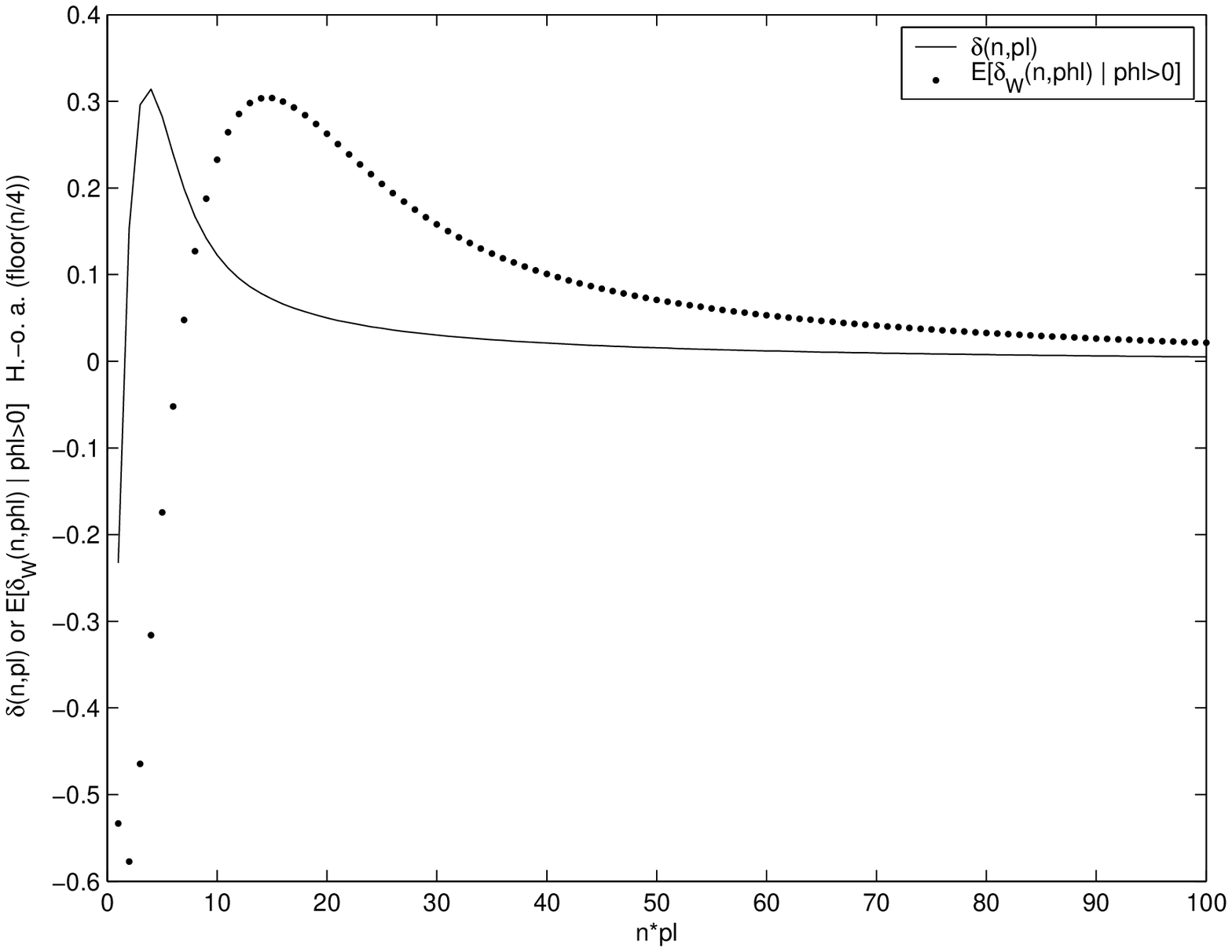,width=\textwidth}}
\vspace*{-3pt}
    \caption{$\delta_{n,p} > \overline{\delta}_{n,p}^{(\mathrm{penRho}(n/4))}$ for $np \geq 9$. \label{RP.fig.delta.hoa4}}
\end{minipage}

\medskip

\begin{minipage}[b]{.48\linewidth}
\centerline{\epsfig{file=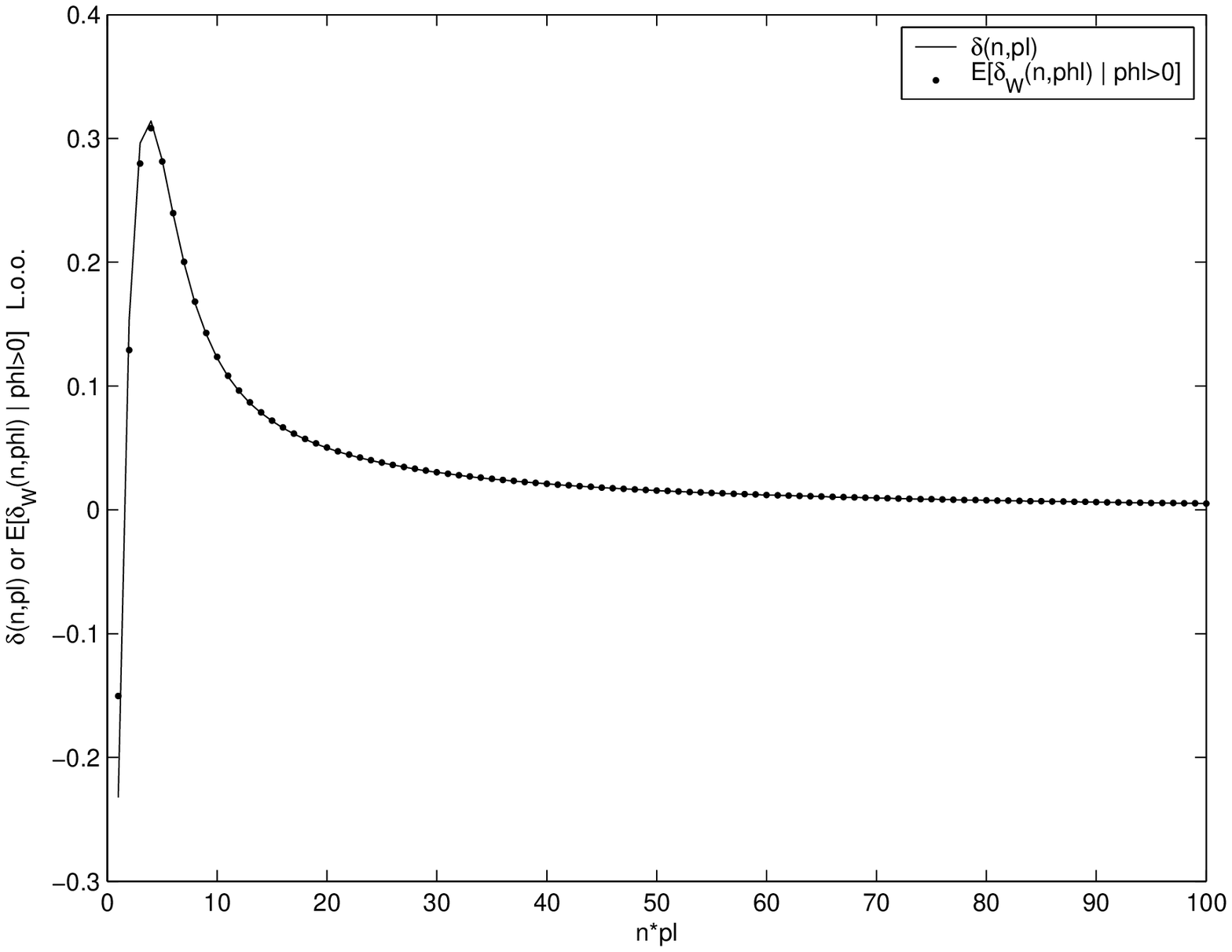,width=\textwidth}}
\vspace*{-3pt}
   \caption{$\delta_{n,p} \approx \overline{\delta}_{n,p}^{(\mathrm{penLoo})}$. \label{RP.fig.delta.loo}}
\end{minipage} \hfill
\begin{minipage}[b]{.48\linewidth}
\centerline{\epsfig{file=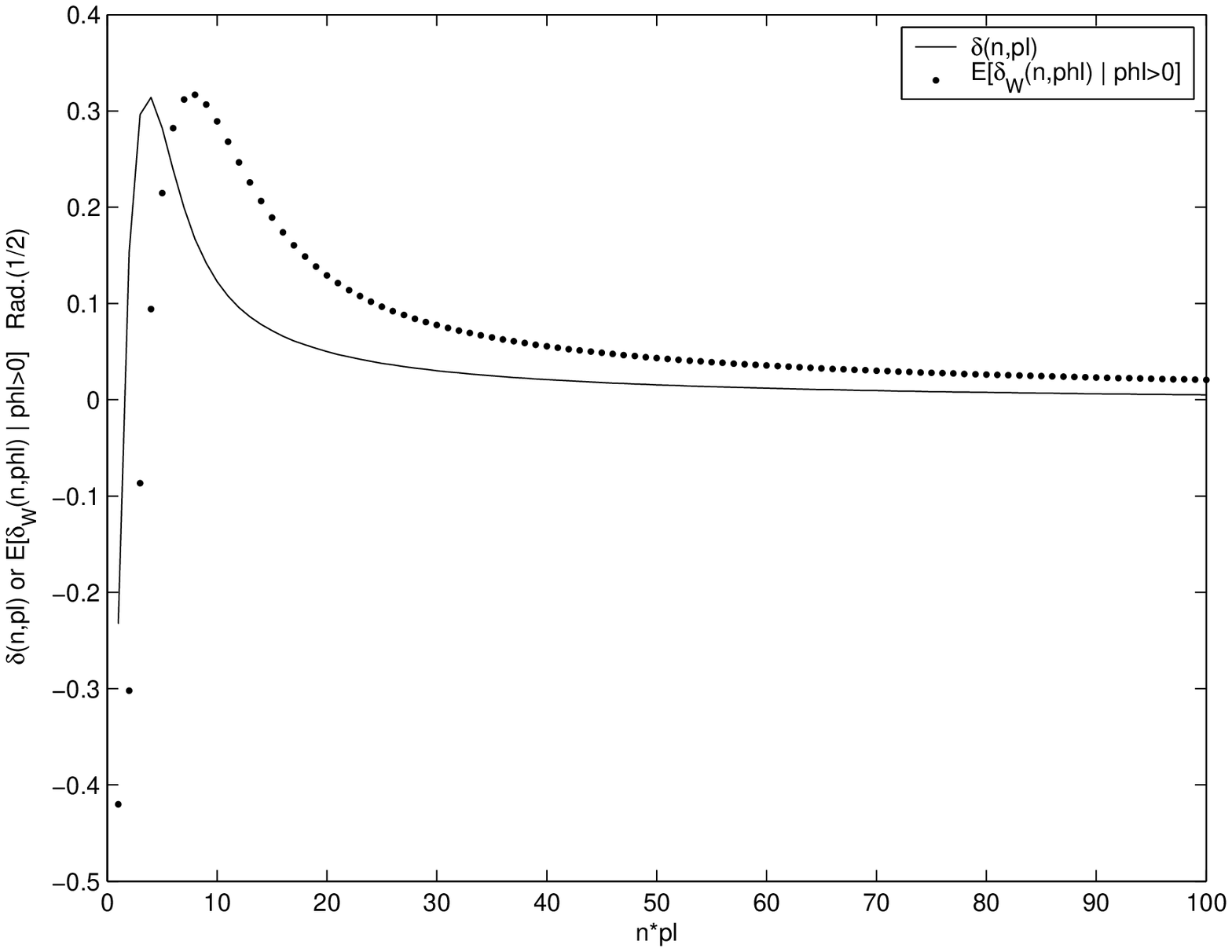,width=\textwidth}}
\vspace*{-3pt}
    \caption{$\delta_{n,p} > \overline{\delta}_{n,p}^{(\mathrm{penRad}(1/2))}$ for $np \geq 6$. \label{RP.fig.delta.rad}}
\end{minipage}
\end{figure}

It follows that Loo weights are the most accurate ones, even when $np$ is small. On the contrary, Rho ($n/2$) and Rad tend to overestimate $\penid$ since $\overline{\delta}_{n,p}^{(\mathrm{penW})} > \delta_{n,p}$ (except when $np$ is small, where the inequality is reversed).
It also seems that the bias of Rho ($q$) is a decreasing function of $q$, as illustrated by Figures~\ref{RP.fig.delta.hoa}--\ref{RP.fig.delta.hoa4}.
Finally, Efr and Poi are strongly underestimating the ideal penalty, mostly because of the $1-(n\phl)^{-1}$ term in $R_{1,W}(n,\phl)$ and $R_{2,W}(n,\phl)$.

This can be summed up as follows:
\begin{equation} \label{RP.eq.comp.penW} \mbox{penRad} \approx \mbox{penRho} > \mbox{penLoo} \approx \penid > > \mbox{penEfr} \approx \mbox{penPoi} , \end{equation} where ``$>>$'' means a comparatively large gap, but still negligible at first order.
Hence, we can expect that the Loo penalty is the most efficient, closely followed by Rad and by Rho.
However, from the non-asymptotic point of view, it turns out that smaller prediction loss is obtained by overpenalizing slightly (and sometimes strongly, see the simulations of Section~\ref{RP.sec.simus} and the discussion of Section~\ref{RP.sec.pratique.const.overpen}).
Then, the ordering of \eqref{RP.eq.comp.penW} may also be the one of the prediction performances of RP, the best performances being obtained with Rad and Rho. This is confirmed by the simulation study of Section~\ref{RP.sec.simus}.

Another interesting point is that $\overline{\delta}_{n,p}^{(\mathrm{penRho})} \propto \delta_{n,p}$ when $np$ is large enough. Then, provided that histograms with too small bins are removed from the collection, penLoo and penRho are almost equivalent, up to the choice of the factor $C$.
If a wise tuning of $C$ is possible, it remains to choose between Loo and Rho according to computational issues (see the discussion of Section~\ref{RP.sec.pratique.weight}).

\subsection{Other exchangeable weights} \label{sec.Thm1.autresW}

The oracle inequality of Theorem~\ref{RP.the.oracle_traj_non-as} is only stated for the five ``classical'' exchangeable weights of Section~\ref{sec.cadreRP.heur}.
Nevertheless, replacing the threshold 3 by some $T \geq 2$ at step~1 of Procedure~\ref{RP.def.proc.his}, the proof of Theorem~\ref{RP.the.oracle_traj_non-as} can be extended to any resampling weight vector $W$
satisfying:
\begin{enumerate}
\item $W$ is exchangeable,
\item $R_{1,W} (n,p) + R_{2,W}(n,p) \approx 2 \CWinf$ for $np$ large enough (with a non-asymptotic control on the ratio between these two quantities, as in the proof of Proposition~\ref{RP.pro.comp.Epen.Ep2}),
\item $R_{1,W} (n,p) + R_{2,W}(n,p) > (1+\epsilon) \CWinf$ for some $\epsilon>0$, as soon as $np \geq T \geq 2$ (as in Lemma~\ref{RP.le.CWinf.minor}).
\end{enumerate}

In particular, the first two conditions hold for all the exchangeable weights considered in Proposition~\ref{RP.pro.comp.Epen.Ep2}.
The third one is satisfied for most of them as soon as $T$ is large enough (see Lemma~\ref{RP.le.CWinf.minor} in Section~\ref{RP.sec.proof.tools}).

\section{Simulation study} \label{RP.sec.simus}
As an illustration of the results of Section \ref{RP.sec.main}, the prediction performances of Procedure~\ref{RP.def.proc.his} (with several resampling schemes), Mallows' $C_p$ and $V$-fold cross-validation are compared on some simulated data.

\subsection{Experimental setup} \label{VFCV.sec.simus.setup}
We consider four experiments, called S1, S2, HSd1 and HSd2.
Data are generated according to
\[ Y_i = \bayes(X_i) + \sigma(X_i) \epsilon_i \]
where $\paren{ X_i }_{1 \leq i \leq n}$ are independent with uniform distribution over $\X=[0;1]$ and $\paren{\epsilon_i}_{1 \leq i \leq n}$ are independent standard Gaussian variables independent of $\paren{ X_i }_{1 \leq i \leq n}$.
The experiments differ from the regression function $\bayes$ (smooth for S, see Figure~\ref{VFCV.fig.sin.fonc}; smooth with jumps for HS, see Figure~\ref{VFCV.fig.Hsin.fonc}), the noise type (homoscedastic for S1 and HSd1, heteroscedastic for S2 and HSd2) and the sample size $n$ (see Table~\ref{RP.Tableun}). Instances of data sets are plotted on Figures~\ref{VFCV.fig.S1.data}--\ref{VFCV.fig.HSd2.data}.

\begin{figure}
\begin{minipage}[b]{.48\linewidth}
  \centerline{\epsfig{file=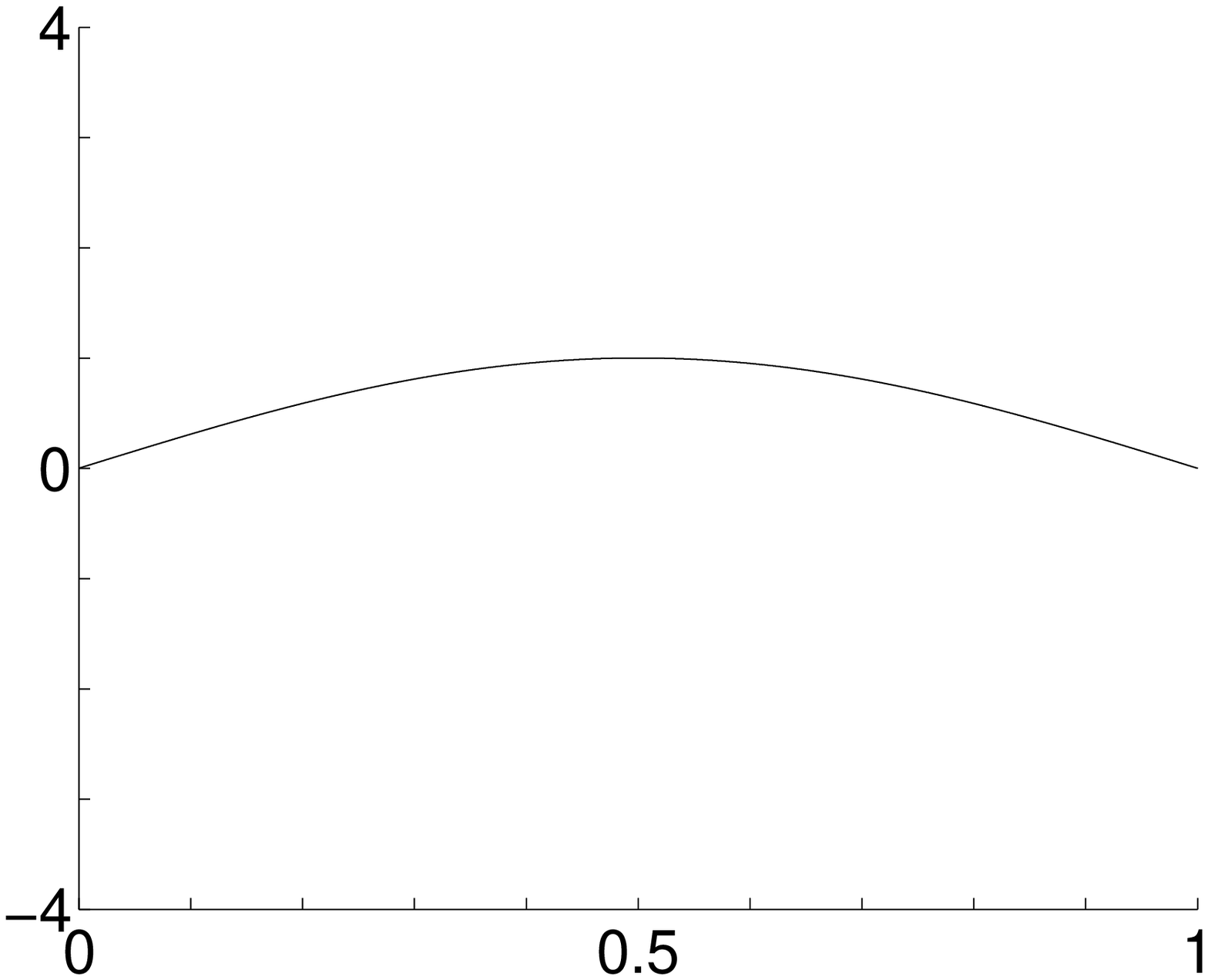,width=\textwidth}}
  \caption{$\bayes(x) = \sin(\pi x)$.\label{VFCV.fig.sin.fonc}}
\end{minipage} \hfill
 \begin{minipage}[b]{.48\linewidth}
  \centerline{\epsfig{file=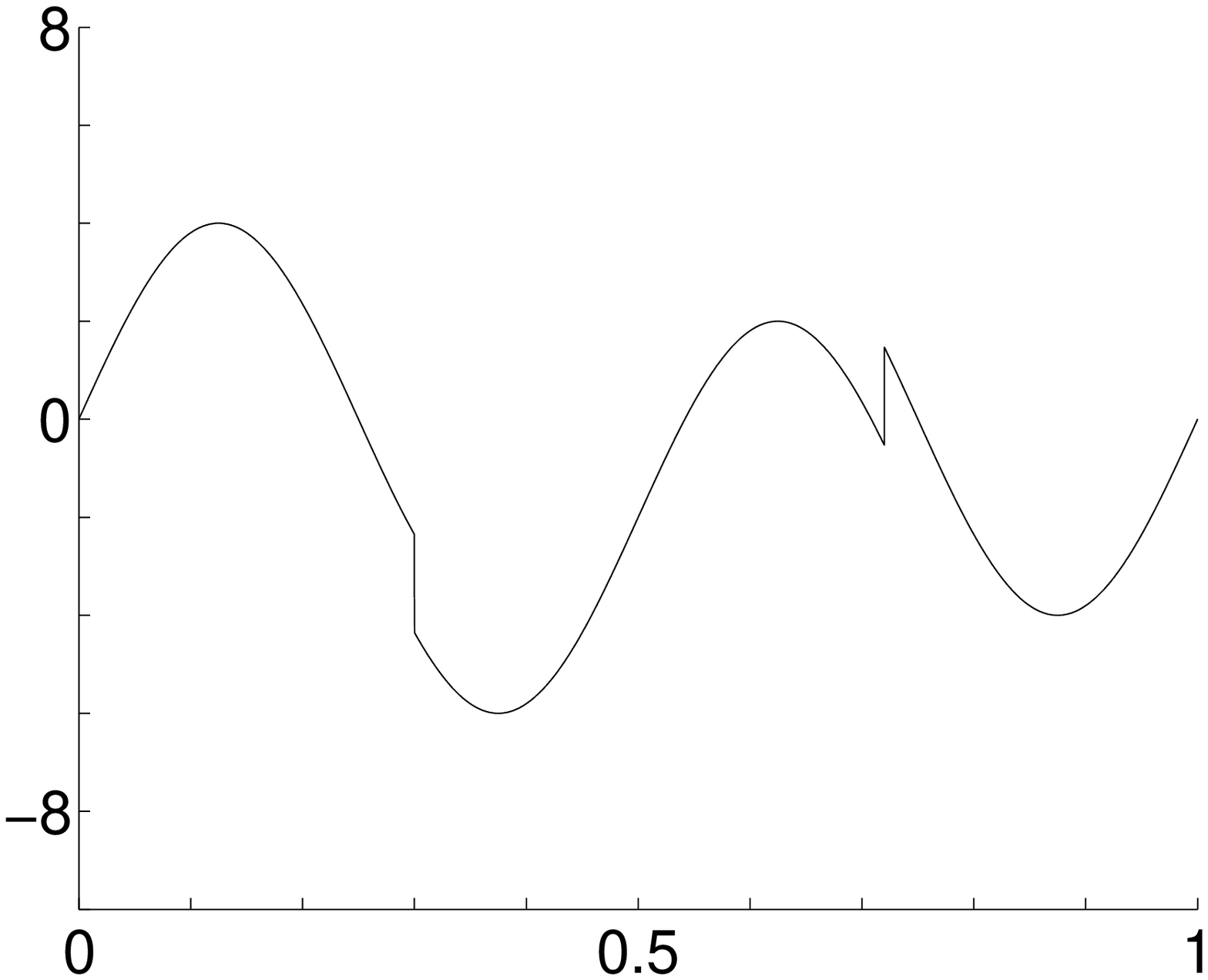,width=\textwidth}}
  \caption{$\bayes(x) = \mathrm{HeaviSine}(x)$ (see \cite{Don_Joh:1995}).\label{VFCV.fig.Hsin.fonc}}
\end{minipage}

\bigskip

\begin{minipage}[b]{.48\linewidth}
\centerline{\epsfig{file=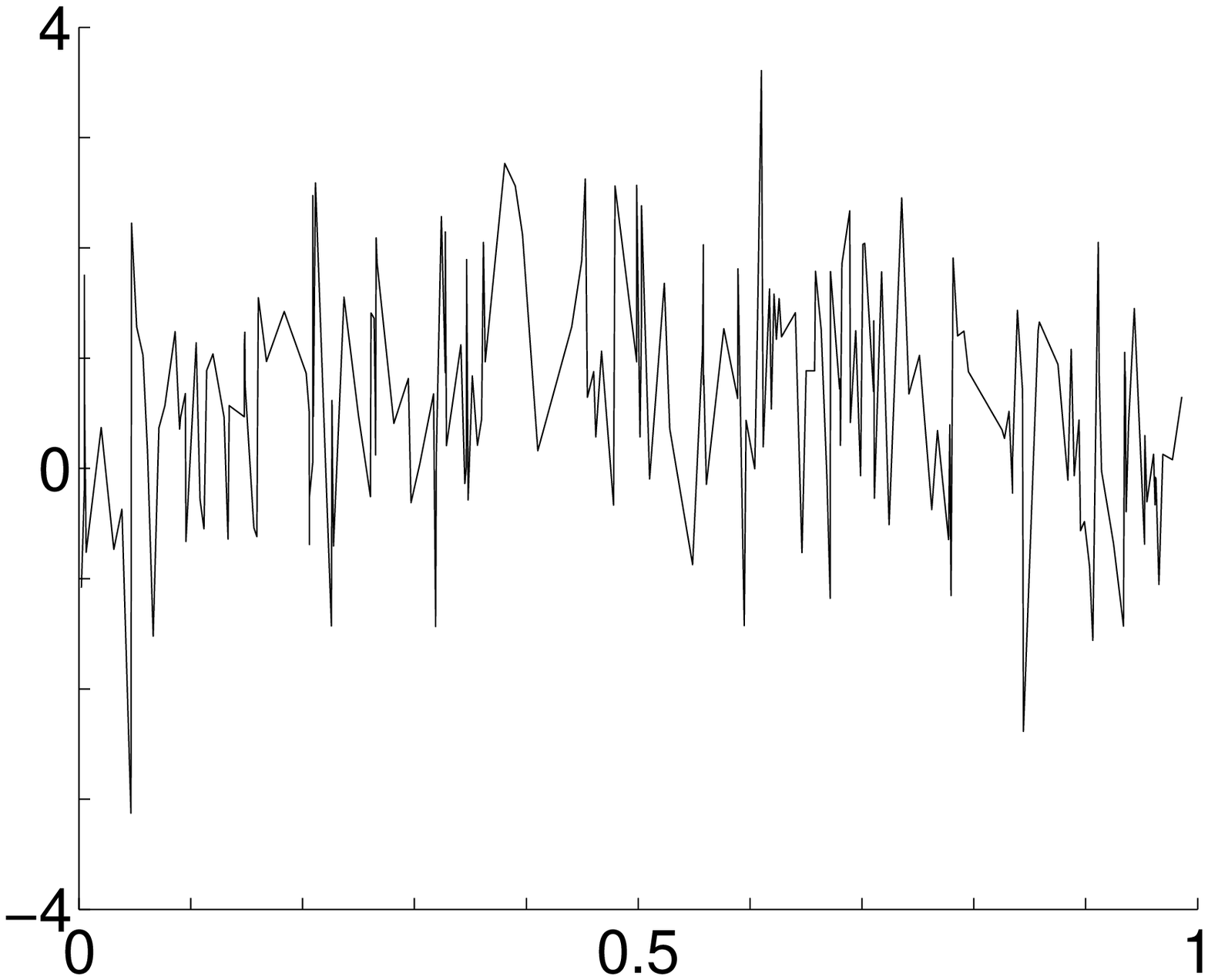,width=\textwidth}}
  \caption{S1: $\bayes(x)=\sin(\pi x)$, $\sigma \equiv 1$, $n=200$.}
  \label{VFCV.fig.S1.data}
\end{minipage} \hfill
 \begin{minipage}[b]{.48\linewidth}
  \centerline{\epsfig{file=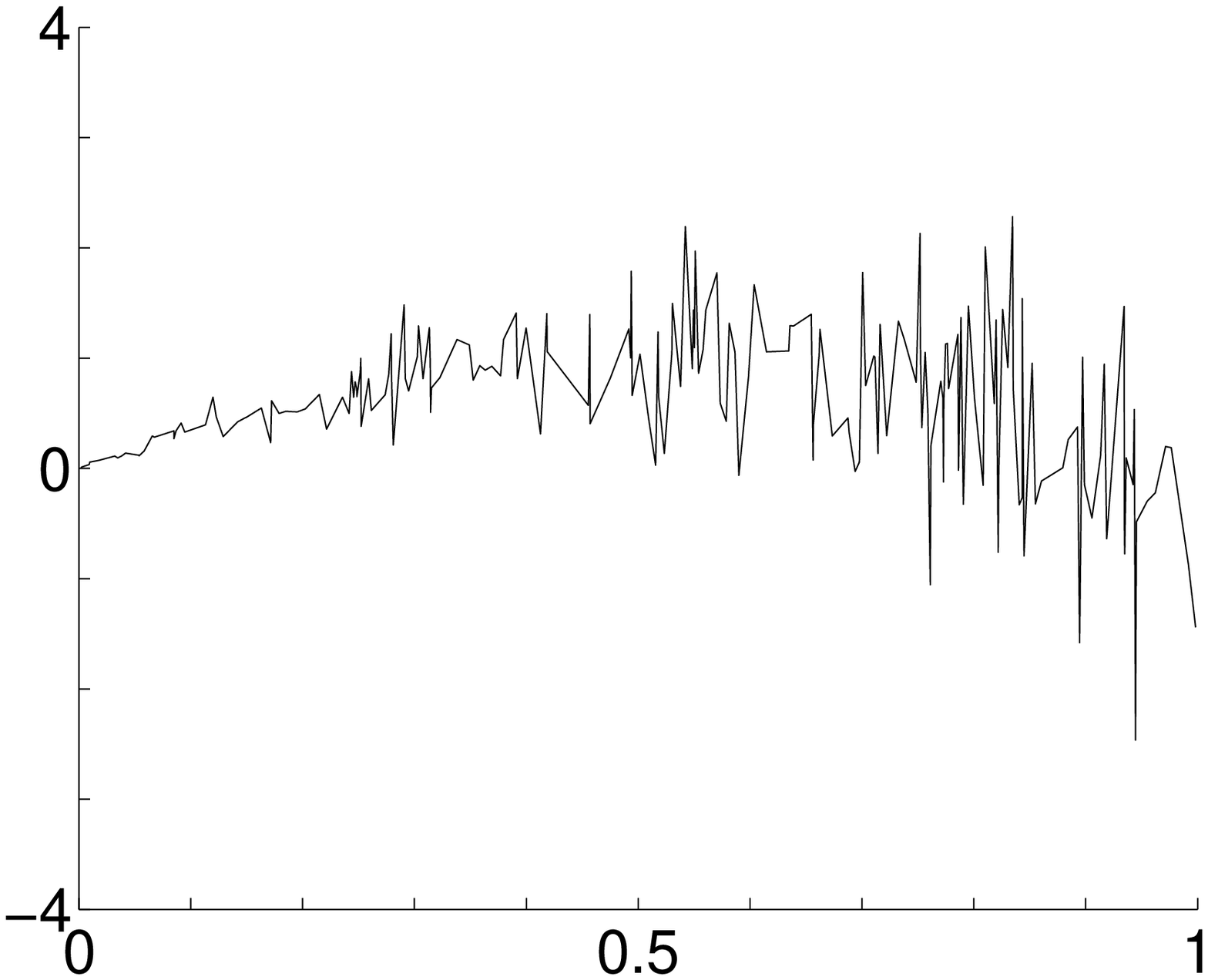,width=\textwidth}}
  \caption{S2: $\bayes(x)=\sin(\pi x)$, $\sigma(x)=x$, $n=200$.}
  \label{VFCV.fig.S2.data}
\end{minipage}

\bigskip

\begin{minipage}[b]{.48\linewidth}
  \centerline{\epsfig{file=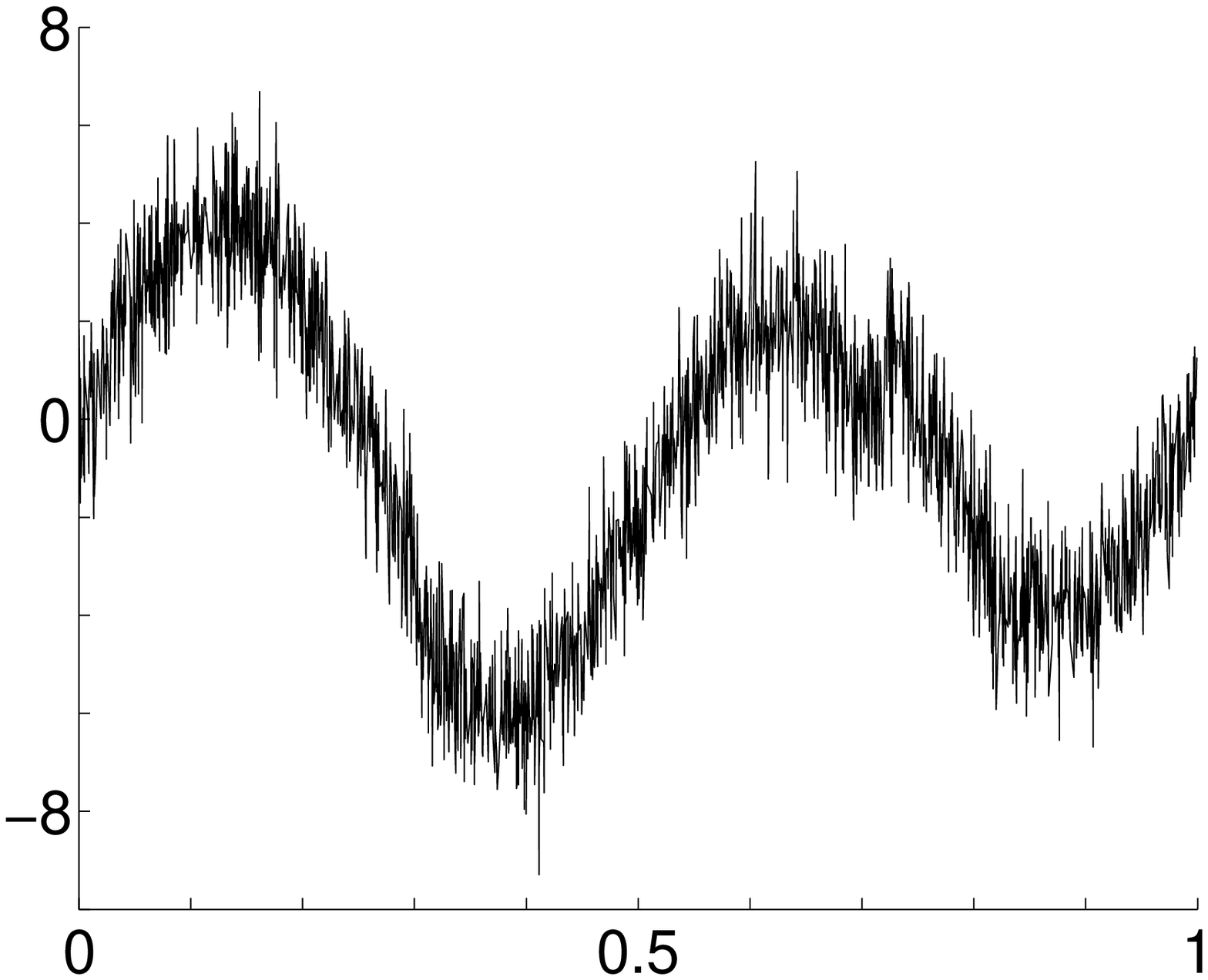,width=\textwidth}}
  \caption{HSd1: HeaviSine, $\sigma\equiv 1$, $n=2048$.}
  \label{VFCV.fig.HSd1.data}
\end{minipage} \hfill
 \begin{minipage}[b]{.48\linewidth}
  \centerline{\epsfig{file=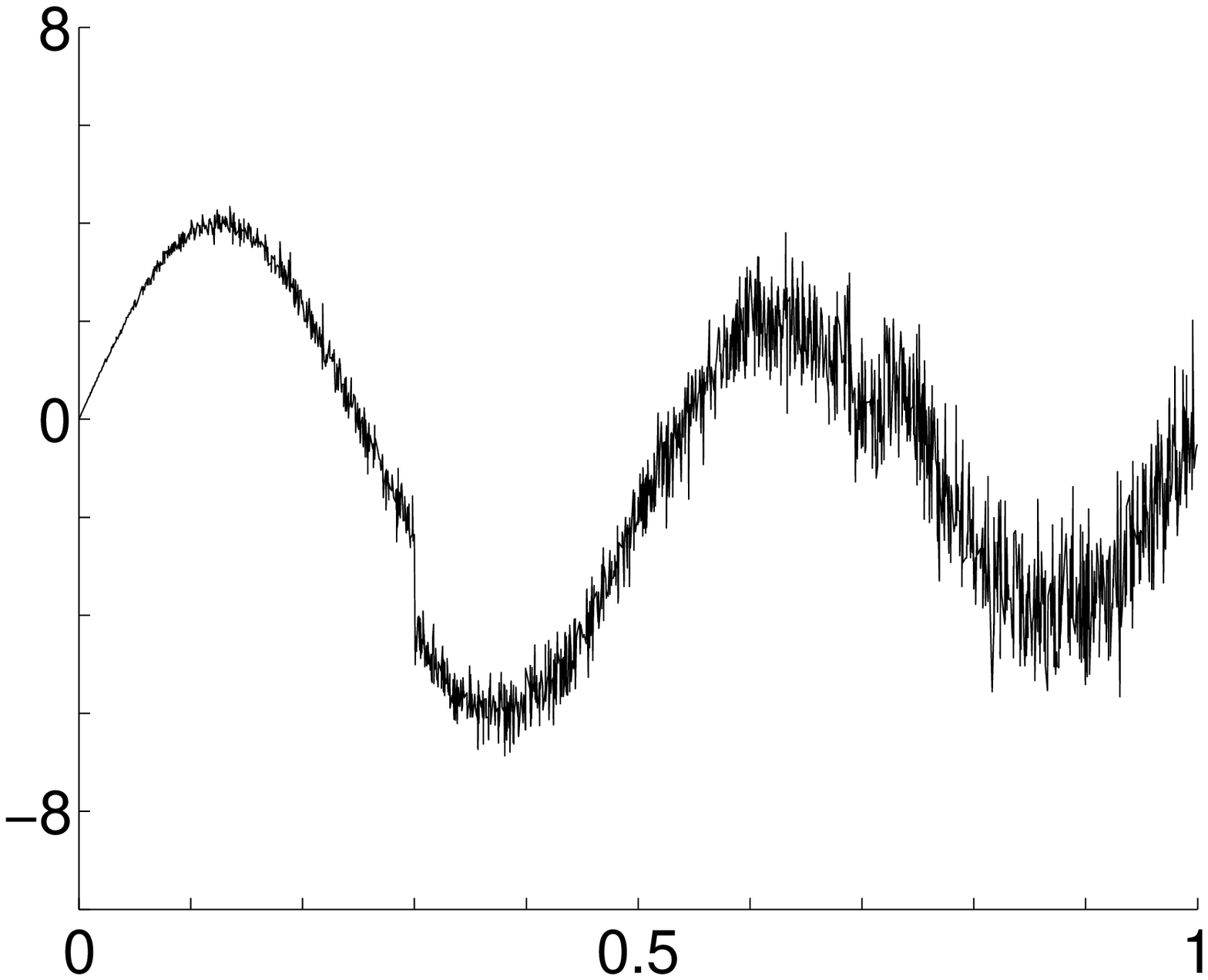,width=\textwidth}}
  \caption{HSd2: HeaviSine, $\sigma(x)=x$, $n=2048$.}
  \label{VFCV.fig.HSd2.data}
\end{minipage}
\end{figure}

The collections of histogram models also differ according to the experiments.
Define \begin{gather*}
\forall k, k_1, k_2 \in \N \backslash \set{0}, \quad \paren{\Il}_{\lambda \in \Lambda_k} = \paren{ \left[ \frac{j}{k}; \frac{j+1}{k} \right) }_{0 \leq j \leq k-1} \mbox{ and } \\
\paren{\Il}_{\lambda \in \Lambda_{(k_1,k_2)}} = \paren{ \left[ \frac{j}{2k_1}; \frac{j+1}{2k_1} \right) }_{0 \leq j \leq k_1-1} \cup \paren{ \left[ \frac{1}{2} + \frac{j}{2k_2}; \frac{1}{2} + \frac{j+1}{2k_2} \right) }_{0 \leq j \leq k_2-1} .
\end{gather*}
For every $m \in \paren{\N \backslash \set{0}} \cup \paren{\N \backslash \set{0}}^2$, let $S_m$ be the histogram model associated with the partition $\paren{\Il}_{\lamm}$.
Then, for each experiment, the collection of models is $\paren{S_m}_{\M_n}$ with different index sets $\M_n$:
\makeatletter
\setlength\leftmargini   {28\p@}
\makeatother
\begin{enumerate}
\item[S1] regular histograms with $1 \leq D \leq n \paren{\ln(n)}^{-1}$ pieces, that is \[ \M_n = \set{ 1, \ldots, \left\lfloor \frac{n}{\ln(n)} \right\rfloor } . \]
\item[S2] histograms regular on $\croch{0;1/2}$ (resp. on $\croch{1/2;1}$), with $D_1$ (resp. $D_2$) pieces, $1 \leq D_1,D_2 \leq n \paren{2 \ln(n)}^{-1}$. The model of constant functions is added to $\M_n$, that is \[ \M_n = \set{1} \cup \set{ 1, \ldots, \left\lfloor \frac{n}{2\ln(n)} \right\rfloor }^2 . \]
\item[HSd1] dyadic regular histograms with $2^k$ pieces, $0 \leq k \leq \ln_2(n) - 1$, that is \[ \M_n = \set{ 2^k \telque 0 \leq k \leq \ln_2(n)-1 } .\]
\item[HSd2] dyadic histograms regular on $\croch{0;1/2}$ (resp. on $\croch{1/2;1}$) with bin sizes $2^{-k_1}$ (resp. $2^{-k_2}$), $0 \leq k_1,k_2 \leq \ln_2(n) - 2$ (dyadic version of S2). The model of constant functions is added to $\M_n$, that is \[ \M_n = \set{1} \cup \set{ 2^k \telque 0 \leq k \leq \ln_2(n)-2 }^2 .\]
\end{enumerate}
Note that the collections of models used in experiments S2 and HSd2 can adapt to $\bayes$ and $\sigma(\cdot)$.
Therefore, the oracle model is generally quite efficient so that the model selection problem is more challenging.


The following procedures\footnote{The code used for computing resampling penalties is available on the author's webpage at
\url{http://www.di.ens.fr/~arlot/index.htm}.} are compared:
\makeatletter
\setlength\leftmargini   {42\p@}
\makeatother
\begin{enumerate}
\item[Mal] Mallows' $C_p$ penalty: $\pen(m) = 2 \widehat{\sigma}^2 D_m n^{-1}$ where $\widehat{\sigma}^2$ is the classical variance estimator defined as
\begin{equation} \label{RP.def.var-estim} \widehat{\sigma}^2 = \frac{d^2 \paren{ Y\nupl , S_{\left\lfloor n/2 \right\rfloor} }}{n - \left\lfloor n/2 \right\rfloor} ,\end{equation}
where $Y\nupl = (Y_i)_{1 \leq i \leq n} \in \R^n$, $S_{\left\lfloor n/2 \right\rfloor}$ is any model of dimension $ \left\lfloor n/2 \right\rfloor$ (only assumed to have a bias negligible in front of $\sigma^2$) and $d$ is the Euclidean distance on $\R^n$.
The non-asymptotic validity of this model selection procedure in homoscedastic regression has been assessed by Baraud \cite{Bar:2000}.
\item[$\E\croch{\penid}$] Expectation of the ideal penalty: $\pen(m) = \E\croch{\penid(m)}$, which witnesses what is a good performance in each experiment.
\item[VFCV] $V$-fold cross-validation, with $V \in \{ 2,5,10,20 \}$ (defined as in \cite{Arl:2008a}).
\item[LOO] Leave-one-out (that is VFCV with $V=n$).
\item[penEfr] Efron ($n$) penalty \eqref{RP.def.pen.his} with $C= \CWinf =1$.
\item[penRad] Rademacher ($1/2$) penalty \eqref{RP.def.pen.his} with $C= \CWinf =1$.
\item[penRho] Random hold-out ($n/2$) penalty \eqref{RP.def.pen.his} with $C= \CWinf =1$.
\item[penLoo] Leave-one-out penalty \eqref{RP.def.pen.his} with $C= \CWinf =n-1$.
\end{enumerate}
\makeatletter
\setlength\leftmargini   {22\p@}
\makeatother
For each of these, the same penalties multiplied by $5/4$ are also considered (and they are denoted by a $+$ symbol added after the shortened names). This intends to test for overpenalization (the choice of the factor $5/4$ being arbitrary and certainly not optimal, see Section~\ref{RP.sec.pratique.const.overpen}).

In each experiment, for each simulated data set, first the models with $2$ data points or less in one piece of their associated partition are removed.
Then, the least-squares estimators $\ERM_m$ are computed for each $\mMh_n$.
Finally, $\mh \in \Mh_n$ is selected using each procedure and its true excess loss $\perte{\ERM_{\mh}}$ is computed as well as the excess loss of the oracle $\inf_{\mM_n} \perte{\ERM_m}$.
$N=1000$ data sets are simulated, thanks to which the model selection performance of each procedure is estimated through the two following benchmarks:
\begin{equation*} 
C_{\mathrm{or}} = \frac{ \E\croch{ \perte{\ERM_{\mh}} }} {\E\croch{ \inf_{\mM_n} \perte{\ERM_m} }} \qquad C_{\mathrm{path-or}} = \E\croch{\frac{ \perte{\ERM_{\mh} }} {\inf_{\mM_n} \perte{\ERM_m} } }
\end{equation*}
Basically, $C_{\mathrm{or}}$ is the constant that should appear in an oracle inequality like \eqref{RP.eq.oracle_class_non-as}, and $C_{\mathrm{path-or}}$ corresponds to a pathwise oracle inequality like \eqref{RP.eq.oracle_traj_non-as}.
Since $C_{\mathrm{or}}$ and $C_{\mathrm{path-or}}$ approximatively give the same rankings between procedures, Table~\ref{RP.Tableun} only reports $C_{\mathrm{or}}$; the values of $C_{\mathrm{path-or}}$ are reported in \cite{Arl:2008b:app}.

\begin{table}[t]
\caption{Accuracy indices $C_{\mathrm{or}}$ for each procedure in four experiments,
$\pm$ a rough estimate of uncertainty of the value reported (that is the empirical
standard deviation divided by $\sqrt{N}$; $N=1000$). In each column, the more accurate
procedures (taking the uncertainty into account) are bolded
\label{RP.Tableun}}
\begin{tabular}
{p{0.16\textwidth}@{\hspace{0.025\textwidth}}p{0.16\textwidth}@{\hspace{0.025\textwidth}}p{0.16\textwidth}@{\hspace{0.025\textwidth}}p{0.17\textwidth}@{\hspace{0.025\textwidth}}p{0.19\textwidth}}
\hline\noalign{\smallskip}
Experiment & S1 & S2 & HSd1 & HSd2 \\ \noalign{\smallskip} \hline \noalign{\smallskip}
 $\bayes$ & $\sin(\pi \cdot)$ & $\sin(\pi \cdot)$ & HeaviSine & HeaviSine \\
 $\sigma(x)$ & 1 & $x$ & 1 & $x$ \\
 $n$ (sample size) & 200 & 200 & 2048 & 2048 \\
 $\M_n$ & regular & 2 bin sizes & dyadic, regular & dyadic, 2 bin sizes \\ \noalign{\smallskip} \hline \noalign{\smallskip}
 $\E\croch{\penid}$ & $ 1.919 \pm 0.03 $ & $ 2.296 \pm 0.05 $ & $ 1.028 \pm 0.004 $ & $ \meil{ 1.102 \pm 0.004 } $ \\
$\E\croch{\penid}$+ & $ \meil{ 1.792 \pm 0.03 } $ & $ \meil{ 2.028 \pm 0.04 } $ & $ \meil{ 1.003 \pm 0.003 } $ & $ \meil{ 1.089 \pm 0.004 } $ \\
Mal & $ 1.928 \pm 0.04 $ & $ 3.687 \pm 0.07 $ & $ 1.015 \pm 0.003 $ & $ 1.373 \pm 0.010 $ \\
Mal+ & $ \meil{ 1.800 \pm 0.03 } $ & $ 3.173 \pm 0.07 $ & $ \meil{ 1.002 \pm 0.003 } $ & $ 1.411 \pm 0.008 $ \\ \noalign{\smallskip} \hline \noalign{\smallskip}
 2-FCV & $ 2.078 \pm 0.04 $ & $ 2.542 \pm 0.05 $ & $ \meil{ 1.002 \pm 0.003 } $ & $ 1.184 \pm 0.004 $ \\
5-FCV & $ 2.137 \pm 0.04 $ & $ 2.582 \pm 0.06 $ & $ 1.014 \pm 0.003 $ & $ 1.115 \pm 0.005 $ \\
10-FCV & $ 2.097 \pm 0.04 $ & $ 2.603 \pm 0.06 $ & $ 1.021 \pm 0.003 $ & $ 1.109 \pm 0.004 $ \\
20-FCV & $ 2.088 \pm 0.04 $ & $ 2.578 \pm 0.06 $ & $ 1.029 \pm 0.004 $ & $ 1.105 \pm 0.004 $ \\
LOO & $ 2.077 \pm 0.04 $ & $ 2.593 \pm 0.06 $ & $ 1.034 \pm 0.004 $ & $ 1.105 \pm 0.004 $ \\ \noalign{\smallskip} \hline \noalign{\smallskip}
 penRad & $ 1.973 \pm 0.04 $ & $ 2.485 \pm 0.06 $ & $ 1.018 \pm 0.003 $ & $ \meil{ 1.102 \pm 0.004 } $ \\
penRho & $ 1.982 \pm 0.04 $ & $ 2.502 \pm 0.06 $ & $ 1.018 \pm 0.003 $ & $ \meil{ 1.103 \pm 0.004 } $ \\
penLoo & $ 2.080 \pm 0.04 $ & $ 2.593 \pm 0.06 $ & $ 1.034 \pm 0.004 $ & $ 1.105 \pm 0.004 $ \\
penEfr & $ 2.597 \pm 0.07 $ & $ 3.152 \pm 0.07 $ & $ 1.067 \pm 0.005 $ & $ 1.114 \pm 0.005 $ \\ \noalign{\smallskip} \hline \noalign{\smallskip}
 penRad+ & $ \meil{ 1.799 \pm 0.03 } $ & $ \meil{ 2.137 \pm 0.05 } $ & $ \meil{ 1.002 \pm 0.003 } $ & $ \meil{ 1.095 \pm 0.004 } $ \\
penRho+ & $ \meil{ 1.798 \pm 0.03 } $ & $ \meil{ 2.142 \pm 0.05 } $ & $ \meil{ 1.002 \pm 0.003 } $ & $ \meil{ 1.095 \pm 0.004 } $ \\
penLoo+ & $ \meil{ 1.844 \pm 0.03 } $ & $ \meil{ 2.215 \pm 0.05 } $ & $ \meil{ 1.004 \pm 0.003 } $ & $ \meil{ 1.096 \pm 0.004 } $ \\
penEfr+ & $ 2.016 \pm 0.05 $ & $ 2.605 \pm 0.06 $ & $ 1.011 \pm 0.003 $ & $ \meil{ 1.097 \pm 0.004 } $ 
 \\ \noalign{\smallskip} \hline
\end{tabular}
\end{table}

\subsection{Results and comments}
First, the above experiments show the interest of both Resampling Penalization (RP) and VFCV in several difficult frameworks, with relatively small sample sizes. Although RP and VFCV cannot compete with simple procedures such as Mallows' $C_p$ from the computational point of view, they are much more efficient when the noise is heteroscedastic (S2 and HSd2). In these difficult frameworks, the prediction performances of RP and VFCV are comparable to those of $\E\croch{\penid}$.
Note that in HSd2, penRad and penRho give smaller losses than any penalty proportional to the dimension of the models (see Section~\ref{RP.sec.calc.penvslin}).
Moreover, penRad and penRho perform slighlty worse than Mallows' $C_p$ for the easiest problems (S1 and HSd1), which can be interpretated as the unavoidable price for robustness.

Second, in the four experiments, the best procedures always are the overpenalizing ones: many of them even beat the perfectly unbiased $\E\croch{\penid}$, showing the crucial need to overpenalize.
This phenomenon disappears for small $\sigma$ and large $n$ \cite[Experiments S0.1 and S1000]{Arl:2008b:app}, hence it is certainly due to the small signal-to-noise ratio.
We would like to insist on the importance of the overpenalization phenomenon, which is seldom mentioned in theoretical papers because it vanishes in the asymptotic framework, and it is quite hard to find from theoretical results.


Let us now compare RP and VFCV. According to the four experiments of Table~\ref{RP.Tableun}, RP with Rad or Rho resampling schemes clearly outperforms VFCV for any $V$, even without overpenalizing.
The only exception to this is HSd1 where $2$-fold cross-validation yields a particularly good model selection performance.

This can be interpretated thanks to the non-asymptotic study of the performance of $V$-fold cross-validation provided in \cite{Arl:2008a}.
In short, VFCV overpenalizes within a factor $1 + 1/(2(V-1))$, while the $V$-fold criterion has a variance decreasing with $V$.

Then, when overpenalization is necessary (for instance in S1, S2 or HSd1), small values of $V$ can outperform the leave-one-out ($V=n$).
Nevertheless, RP with the right overpenalization level $C/\CWinf$ leads to a smaller prediction loss than VFCV, because RP provides a less variable model selection criterion than VFCV.
The reason why penRad and penRho also perform slightly better without overpenalization is that they naturally overpenalize when $C= \CWinf = 1$ (see Section~\ref{RP.sec.calc.delta}).


Let us now consider the model selection performance of RP with several exchangeable resampling schemes. The two best ones are Rad and Rho in the four experiments, with or without overpenalization. Then, Loo performs slightly worse (but not always significantly) and Efr much worse.
Looking carefully at the values of the penalties, it appears that Rad and Rho slightly overpenalize, Loo is exactly at the right level, and Efr underpenalizes (as well as Poi, which has performances quite similar to the ones of Efr, see \cite{Arl:2008b:app}). Note that this comparison can also be derived from theoretical computations (see Section~\ref{RP.sec.calc.delta}).
Since overpenalization is benefic in the four experiments of Table~\ref{RP.Tableun}, this explains why penRad and penRho slightly outperform penLoo. In the case of Efron's boostrap penalty, underpenalizing implies overfitting which explains the comparatively bad performances reported in Table~\ref{RP.Tableun}.


We conclude this section with remarks concerning some particular points of the simulation study.
\begin{itemize}
\item On the same data sets, Mallows' $C_p$ and its overpenalized version Mal+ were performed with the true mean variance $\E\croch{\sigma^2(X)}$ instead of $\widehat{\sigma}^2$ (which would not be possible on a real data set). It yielded worse model selection performance for all experiments but S2, in which $C_{\mathrm{or}}(\mathrm{Mal})=2.657 \pm 0.06$ and $C_{\mathrm{or}}(\mathrm{Mal}+)=2.437 \pm 0.05$.
Therefore, overpenalization is crucial in experiment S2, more than the shape\footnote{The shape of a penalty is defined as the way $\pen(m)$ depends on $m$ up to a linear transformation.} of the penalty itself.
Moreover, the overpenalization level being fixed, resampling penalties remain significantly better than Mallows' $C_p$.
Hence, the performances of Mallows' $C_p$ in Table~\ref{RP.Tableun} are not only due to a bad estimation of the mean noise-level (see also Section~\ref{RP.sec.comparaison}).
\item Eight additional experiments are reported in \cite{Arl:2008b:app}, showing similar results with various $n$, $\sigma$ and $\bayes$ (although the assumptions of Theorem~\ref{RP.the.oracle_traj_non-as} are not always satisfied).
\item Resampling penalties with a $V$-fold subsampling scheme have also been studied in \cite[Section~4]{Arl:2008a} on the same simulated data: exchangeable resampling schemes always give better model selection performance than non-exchangeable ones (significantly when $V$ is small), except for Efr and Poi which tend to underestimate the ideal penalty.
\end{itemize}

\section{Practical implementation}\label{RP.sec.pratique}

This section tackles three main issues for using Procedure~\ref{RP.def.proc.his} in practice: how to compute the resampling penalty \eqref{RP.def.pen.his}? how to choose the weights $W$? how to choose the constant $C$?

\subsection{Computational cost}\label{RP.sec.pratique.cost}

An exact computation of resampling penalties with exchangeable weights (without using formula \eqref{VFCV.eq.pen.Wech} for histograms) would be either impossible or computationally expensive. We suggest two possible ways to fix this problem.

First, one can use a classical Monte-Carlo approximation, that is draw a small number $B$ of independent weight vectors instead of considering each element of the support of $\loi(W)$. Practical Monte-Carlo methods for the boostrap are proposed for instance by Hall \cite[Appendix~II]{Hal:1992}.
Moreover, a non-asymptotic estimation of the accuracy of Monte-Carlo approximation can be obtained via McDiarmid's inequality (see Arlot, Blanchard and Roquain \cite[Proposition~2.7]{Arl_Bla_Roq:2008:RC} for a precise result using the same idea in another framework). This would provide a practical way of quantifying what is lost by making a Monte-Carlo approximation, and choose $B$ consequently (at least for Rad, Rho and Loo
weights).

Second, it is possible to use non-exchangeable weight vectors $W$ such that the cardinality of the support of $\loi(W)$ is much smaller than $n$. A case-example is {\em $V$-fold subsampling}: given a partition $\paren{B_j}_{1 \leq j \leq V}$ of $\set{1, \ldots, n}$ and $J$ a uniform random variable over $\set{1, \ldots, V}$ independent of the data, we define
\[ \forall i \in \set{1, \ldots, n}, \quad W_i = \frac{V}{V-1} \un_{i \notin B_J} . \]
The resulting resampling penalties ---called $V$-fold penalties--- have been introduced and studied in \cite{Arl:2008a}. They are computationally similar to VFCV while being more flexible, since the overpenalization factor is decoupled from the choice of $V$; hence, like resampling penalties, $V$-fold penalties select an estimator with smaller prediction loss than the one selected by VFCV.

Both Monte-Carlo approximation of RP and $V$-fold penalization have been tested on the simulated data of Section~\ref{RP.sec.simus}. The detailed results are given in \cite{Arl:2008b:app}.

\subsection{Choice of the weights}\label{RP.sec.pratique.weight}

The influence of the weights has been investigated from the theoretical
point of view in Section~\ref{RP.sec.calc.delta} with focus on
second-order terms in expectation. However, deviations of $\pen(m)$
around its expectation are likely to depend on the weight vector $W$
since the upper bound in \eqref{RP.eq.conc.penRP} may not be tight. The
simulation study of Section~\ref{RP.sec.simus} allows to take into
account both phenomena in the comparison between the resampling
weights.

In terms of model selection efficiency, Table~\ref{RP.Tableun} shows that the best weights (for accuracy of prediction and for the variability\footnote{The variability of the accuracy is more an indicator of the {\em stability} of the performance of RP than of the variance of the resampling penalty. However, it remains an interesting measure, since a procedure performing always equally well can be preferred to a procedure with better mean efficiency but poor performances on a small probability event.} of this accuracy) are Rho and Rad, whereas Loo perform slightly worse.
On the contrary, from both accuracy and variability points of view, Efron's bootstrap weights perform worse than Rho, Rad and Loo, mainly because they lead to underpenalization.

Note however that this comparison strongly depends on the precise definition\footnote{However, it is quite unclear how to change $\CWinf$ in order to optimize each penalty in the general case.
This is why $\CWinf$ has been chosen as ``simple'' as possible in Table~\ref{RP.TableR2}.} of $\CWinf$, which makes all penalties unbiased at first order but possibly under or over-penalizing at second order. Then, different prediction performances may be observed on data which do not require overpenalization.
Nevertheless, the computations of Section~\ref{RP.sec.calc.delta} show that Efron's bootstrap weights have a real drawback which cannot be fixed only by changing $\CWinf$.


When computing the penalties exactly, Loo weights are the only computationally tractable ones, while being almost as accurate as Rho and Rad. Hence, we suggest their use, enlarging the constant $C$ when needed (see Section~\ref{RP.sec.pratique.const.overpen} on overpenalization).

However, computing $n$ empirical risk minimizers (or the outputs of computationally more expensive algorithms) for each model is not always possible. In such a case, one should avoid using the Leave-one-out with a Monte-Carlo approximation, which would give a large importance to a small number of data points. Rho or Rad weights are much safer in this situation. Alternatively, one may consider the use of $V$-fold penalties \cite{Arl:2008a} as a good alternative when the computational power is limited.


Let us emphasize that this analysis and the subsequent advices should be considered with caution. First, the deviations of resampling penalties around their expectations should be understood much better, because they can be comparable or even larger than the second-order terms in expectations.
Second, the optimal choice of $V$ for $V$-fold cross-validation is known to be different between least-squares regression and binary classification \cite[Section~2.3]{Arl:2008a}.
Such differences are expected to arise for choosing between exchangeable resampling weights.

Remark that the bias of the bootstrap penalty has already been noticed
by Efron \cite{Efr:1983,Efr:1986} who proposed several ways to correct
it, including a double bootstrap procedure and the .632 bootstrap. The
novelty of the approach of this paper is to propose the use of other
exchangeable resampling schemes instead of the boostrap so that the
bias of resampling penalties no longer has to be
corrected.

\subsection{Choice of the constant C} \label{RP.sec.pratique.const}

\subsubsection{Optimal constant for bias} \label{RP.sec.pratique.const.opt}

From the asymptotic point of view, the optimal $C = C^{\star}$ for prediction is generally the one for which $\pen$ estimates the ideal penalty $\penid$ unbiasedly (at least for collections of models of polynomial size).
This is how $\CWinf$ is defined in the histogram framework and Theorem~\ref{RP.the.oracle_traj_non-as} implies that $C=\CWinf$ is asymptotically optimal for prediction. Hence\footnote{See the proof of Theorem~1 in \cite{Arl:2008a} to prove that asymptotic optimality requires $C^{\star} / \CWinf \xrightarrow[n \rightarrow \infty]{} 1 $ as soon as there are enough models close to the oracle.}, $C^{\star}$ is asymptotically equivalent to $\CWinf$.

As showed by Arlot and Massart \cite{Arl_Mas:2008}, $C^{\star}$ can also be estimated directly from data for general penalties, in particular for RP. Hence, the knowledge of $\CWinf$ is not necessary, which can be useful in the general prediction framework (see Section~\ref{RP.sec.discussion.classif}).

\subsubsection{Overpenalization} \label{RP.sec.pratique.const.overpen}

A careful look at the proof of Theorem~\ref{RP.the.oracle_traj_non-as} shows that a similar oracle inequality holds for any $C > 4 \CWinf /5$, the leading constant remaining close to one when $C \sim \CWinf$ {\em asymptotically}. In other words, when the sample size $n$ is small, the optimal constant $C^{\star}$ may not be exactly equal to $\CWinf$.
The simulations of Section~\ref{RP.sec.simus} also support this fact: {\em Overpenalization}, that is, taking $C = C_{\mathrm{ov}} \CWinf$ with $C_{\mathrm{ov}} > 1$, can improve the prediction performance of $\ERM_{\mh}$ when $n$ is small, when $\sigma$ is large or when $\bayes$ is non-smooth.

This problem would appear even if the ``optimal'' constant $C^{\star}$ such that $\pen$ is non-asymptotically unbiased was known.
On Figure~\ref{fig.S2.surpen.Epenid}, the estimated model selection performance of the penalty $C_{\mathrm{ov}} \E\croch{\penid(m)}$ is plotted as a function of $C_{\mathrm{ov}}$, for experiment S2 of Section~\ref{RP.sec.simus}. It appears that the optimal overpenalization constant $C_{\mathrm{ov}}^{\star} \in (1.5 ; 2.35)$ for this particular problem.
More generally, the drawback of using $C=C^{\star}$ is that it does not take into account the deviations of $\penid(m)$ around its expectation. To avoid the possible overfit induced by these deviations, the constant $C$ must be slightly enlarged. A major issue remains: How to estimate $C_{\mathrm{ov}}^{\star}$ from data only, since it strongly depends on $n$, on $\sigma$, on the smoothness of $\bayes$ and on the number of models in $\M_n$?

\begin{figure}
\centerline{\epsfig{file=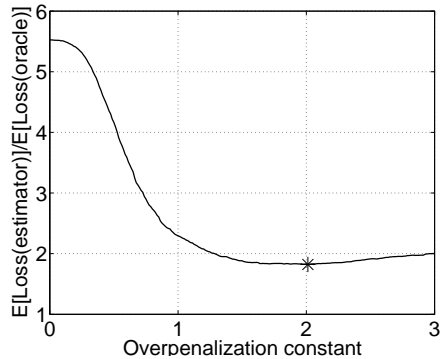,width=0.48\textwidth}}
\caption{The non-asymptotic need for overpenalization: the prediction performance $C_{\mathrm{or}}$ (defined in Section~\ref{VFCV.sec.simus.setup}) of the model selection procedure \eqref{def.mh.pen} with $\pen(m) = C_{\mathrm{ov}} \E\croch{\penid(m)}$ is represented as a function of $C_{\mathrm{ov}}$. Data and models are the ones of experiment S2: $n=200$, $\sigma(x)=x$, $\bayes(x) = \sin(\pi x)$. See Section~\ref{RP.sec.simus} for details. \label{fig.S2.surpen.Epenid}}
\vspace*{-6pt}
\end{figure}


One can think of choosing $C_{\mathrm{ov}}$ by $V$-fold cross-validation, but this would lead to a computationally intractable procedure.
An alternative idea is to use resampling for building a simultaneous confidence region on $\paren{\penid(m)}_{\mM_n}$ instead of estimating $\E\croch{\penid(m)}$ only (see \cite{Arl_Bla_Roq:2008:RC} on confidence regions built with general exchangeable resampling schemes).
Then, the uncertainty on the estimation of $\penid(m)$ can be taken into account for choosing a model, similarly to model selection procedures built upon relative bounds \cite{Aud:2004,Cat:2007}.
Finally, the choice of the overpenalization factor would be replaced by the choice of a confidence level which should be made by the practicioner.
See also \cite[Section~11.3.3]{Arl:2007:phd} for a discussion on a data-driven choice of the overpenalization factor.

\section{Discussion}\label{RP.sec.discussion}

\subsection{Comparison with other procedures}\label{RP.sec.comparaison}
In this article, the Resampling Penalization (RP) family of model selection procedures is defined and showed to satisfy some optimality properties under mild assumptions on the data (Theorems~\ref{RP.the.oracle_traj_non-as} and~\ref{RP.the.holder}).
In particular, RP is robust to the heteroscedasticity of the noise according to both theoretical and experimental results.
The price for robustness is that the computational cost of RP is generally larger than simple procedures like Mallows' $C_p$, even with the suggestions of Section~\ref{RP.sec.pratique.cost}.
The purpose of this subsection is to identify the ``easy'' problems, for which the computational cost of RP can be reduced by using $C_p$-like penalties without enlarging the prediction loss too much.

\subsubsection{Mallows' $C_p$} \label{RP.sec.calc.pen.comp.Mal}
Mallows' $C_p$ penalty is equal to $2 \sigma^2 D_m n^{-1}$ for a model $S_m$ of dimension $D_m$, when the noise-level $\sigma$ is constant. Non-asymptotic results about $C_p$-like penalties can be found in \cite{Bar_Bir_Mas:1999,Bar:2000,Bar:2002,Bir_Mas:2006}. They imply that Mallows' $C_p$ is asymptotically optimal in the homoscedastic framework, when the size of $\M_n$ is polynomial in $n$.

When the mean noise-level is unknown, it must be estimated. A classical estimator of $\E\croch{\sigma^2(X)}$ is defined by \eqref{RP.def.var-estim}.
Baraud \cite{Bar:2000,Bar:2002} showed that the resulting data-driven model selection procedure satisfies a non-asymptotic oracle inequality with leading constant close to one.

Assume for the sake of simplicity that $n$ is even and let $S_{n/2}$ be a model such that each piece of the associated partition contains exactly two data points. Reordering the $(X_i,Y_i)$ according to $X_i$,
\begin{align*}
\pen_{\textrm{Mallows}} (m) &=
\frac{2 D_m}{n^2} \sum_{i=1}^{n/2} \carre{Y_{2i} - Y_{2i-1}}
\end{align*}
so that
\begin{gather} \label{eq.Mal.hist.decomp}
\El \croch{ \pen_{\textrm{Mallows}}(m) } \approx \frac{2}{n} \sum_{\lamm} \paren{D_m \phl} \carre{\sigla} + \frac{2 D_m}{n^2} \sum_{i=1}^{n/2} \paren{\bayes(X_{2i}) - \bayes(X_{2i-1})}^2 \\ \notag
\mbox{where} \qquad \carre{\sigla} \egaldef \E\croch{ \sigma(X)^2 \sachant X \in \Il}
. \end{gather}
This should be compared with the result of Proposition~\ref{RP.pro.EpenRP-Epenid}:
\begin{gather} \label{eq.penid.hist.decomp}
\El \croch{ \penid(m) } \approx \frac{2}{n} \sum_{\lamm}
\paren{\carre{\sigla} + \carre{\sigld}} \\ \notag \mbox{where} \qquad
\carre{\sigld} \egaldef \E\croch{ \paren{\bayes(X) - \bayes_m(X)}^2
\sachant X \in \Il} .\end{gather}
Although both Mallows' $C_p$ and the ideal penalty are in expectation the sum of a ``variance'' term (involving the $\carre{\sigla}$) and a ``bias'' term (involving the variations of $\bayes$ through $(\bayes(X_{2i}) - \bayes(X_{2i-1}))^2$ or $(\sigld)^2$), they differ on at least two points.

First, when $\bayes$ is smooth and $\min_{\lamm} \set{n \phl}$ is large, the ``bias'' term in \eqref{eq.Mal.hist.decomp} is negligible in front of the one of \eqref{eq.penid.hist.decomp}, which means that Mallows' $C_p$ underpenalizes when the ``bias'' component of $\penid$ is large.
Second, the ``variance'' component of $\penid$, which is the main one in general, is distorted in Mallows' $C_p$: the part of the penalty corresponding to $\Il$ is multiplied by $D_m \phl$ which is not close to 1 when the partition $\paren{\Il}_{\lamm}$ is not regular with respect to $\loi(X)$.
This happens for instance in experiments S2 and HSd2 of Section~\ref{RP.sec.simus}. Therefore, there are at least three possibly ``hard'' problem classes:
\begin{itemize}
\item heteroscedastic noise, with irregular histograms and $X$ uniform (for instance S2, HSd2 in Section~\ref{RP.sec.simus}, or Svar2 in \cite{Arl:2008b:app}),
\item heteroscedastic noise, with regular histograms and $X$ highly non-uniform on $\X$,
\item regression function $\bayes$ with jumps (such as HeaviSine\footnote{However, in experiment HSd1, Mallows' $C_p$ still behaves quite well compared to RP. We do not know whether the non-smoothness of $\bayes$ can actually make Mallows' $C_p$ fail.}) or large non-smooth areas (such as Doppler in \cite{Arl:2008b:app}).
\end{itemize}
In either of these cases, one should avoid the use of $C_p$-like penalties, and we suggest resampling penalties as an efficient alternative. As explained in Section~\ref{RP.sec.calc.penvslin} below, the first class of problems can make any penalty proportional to the dimension $D_m$ suboptimal.

\subsubsection{Linear penalties} \label{RP.sec.calc.penvslin}
Mallows' $C_p$ is simple because it is a linear function of the dimension $D_m$ of $S_m$:
\begin{equation} \label{eq.pen.lin} \pen(m) = \widehat{K} D_m \end{equation} and $\widehat{K}$ is the only constant to determine.
Depending on what is known on the mean variance level, the constant $\widehat{K}_{\mathrm{Mallows}}$ can be defined as
\[ 2 \E\croch{\sigma(X)^2} n^{-1} \quad \mbox{or} \quad 2 \widehat{\sigma}^2 n^{-1} . \]
Refined versions of Mallows' $C_p$ have also been proposed \cite{Bar_Bir_Mas:1999,Bar:2002,Bir_Mas:2006} but they are still linear or very close to linearity.

However, according to \eqref{RP.eq.Em_penid}, the ideal penalty is not linear in general, even in expectation.
Moreover, there exist some frameworks in which any penalty of the form \eqref{eq.pen.lin} is suboptimal when data are heteroscedastic \cite{Arl:2008:shape}, that is, it cannot satisfy any oracle inequality with leading constant smaller than some absolute constant $\kappa>1$.
In other words, the \emph{optimal linear penalization procedure} $\penoptlin(m) \egaldef \widehat{K}^{\star} D_m$ is suboptimal, where
\begin{gather*}
\widehat{K}^{\star} \in \arg\min_{K>0} \set{ P\gamma\parenb{\ERM_{\mh(K)}} } \\
\mbox{and} \qquad \forall K>0, \quad \mh(K) \in \arg\min_{\mM_n} \set{ P_n \gamma\paren{\ERM_m} + K D_m} .
\end{gather*}
As showed by Theorem~\ref{RP.the.oracle_traj_non-as}, RP does not suffer from this drawback.

On the one hand, the optimal linear penalization procedure has a better model selection performance than RP for S1, S2 and HSd1, which is not surprising for the ``easy'' problems where Mallows' $C_p$ is almost optimal (S1, HSd1). It is less intuitive for S2 where data are heteroscedastic.
Considering that $\penoptlin$ uses the knowledge of the true distribution $P$, one can understand that it is sufficient to keep a good performance for ``intermediate'' problems.

On the other hand, in experiment HSd2, the optimal linear penalization has a model selection performance $C_{\textrm{or}} = 1.18 \pm 0.01$, which is worse than the one of RP ($C_{\textrm{or}} \leq 1.11$). Thus, the most difficult problem of Section~\ref{RP.sec.simus} (with a large collection of models, heteroscedastic data and bias) gives an example where linear penalties are definitely not adapted, in addition to the ones of \cite{Arl:2008:shape}.

\subsubsection{\emph{Ad hoc} procedures} \label{RP.sec.comparaison.ad-hoc}
One of the main advances with Theorems~\ref{RP.the.oracle_traj_non-as} and~\ref{RP.the.holder} is that RP is proved to work in the heteroscedastic framework contrary to Mallows' $C_p$.
Nevertheless, in a framework such as the one of experiment S2, Mallows' $C_p$ can be adapted to heteroscedasticity by splitting $\X$ into several parts where $\sigma$ is almost constant, and performing the histogram selection procedure with Mallows' $C_p$ separately on each part of $\X$.

More generally, Efromovich and Pinsker \cite{Efr_Pin:1996} and Galtchouk and Pergamenschikov \cite{Gal_Per:2008} (among several others) defined estimators of $\bayes$ that are minimax adaptive in the heteroscedastic framework, the latter by model selection.
In the Gaussian regression framework, Gendre \cite{Gen:2008} proposed a model selection method for estimating simultaneously the regression function and the noise level.

All these procedures may perform slightly better than RP in terms of prediction loss.
They are called ``\latin{ad hoc}'' because they have been specially designed for the heteroscedastic framework (and a particular collection of estimators for \cite{Gal_Per:2008,Gen:2008}).
On the contrary, RP is a general-purpose device: It was neither built to be adaptive to heteroscedasticity nor to take advantage of a specific model, and RP has exactly the same definition in the general prediction framework (see Section~\ref{RP.sec.discussion.classif}).

When no information is available on the data or when no model selection procedure is known for using such information, we suggest the use of RP.
Moreover, available information can be partial or wrong.
Then, using an \latin{ad hoc} procedure would be disastrous whereas a general device like RP would still work.
In short, choose RP if you have no useful information or if you do not trust them.

\subsubsection{Other model selection procedures by resampling} \label{RP.sec.comparaison.other}
The most well-known resampling-based model selection procedure is cross-va\-li\-da\-tion. For practical reasons, it is often used in its $V$-fold version which can have some tricky behavior, in particular for choosing $V$ \cite{Yan:2007b,Arl:2008a}. This can also be showed in the simulation experiments of Section~\ref{RP.sec.simus} (see Table~\ref{RP.Tableun}): In HSd1, $V=2$ performs better than $V \in \set{5, 10, 20}$, a phenomenon explained in \cite{Arl:2008a} by analyzing how the bias of the $V$-fold criterion depends on $V$.

$V$-fold penalization, that is, RP with a $V$-fold subsampling scheme, was proposed in \cite{Arl:2008a} where it was showed to improve significantly the model selection performance of VFCV.
In this paper and in \cite{Arl:2008b:app}, RP with several exchangeable resampling schemes ---generalizing the $V=n$ case--- is proved to perform at least as well as $V$-fold penalization and often better.


Several penalization procedures use the bootstrap for estimating the ideal penalty \cite{Efr:1983,Cav_Shu:1997,Shi:1997}.
As noticed in Remark~\ref{RP.rem.R1W.R2W}, the penalization procedures studied by Shibata \cite{Shi:1997} are quite close to RP, although they are restricted to bootstrap weights, which are the worst ones in the framework of the present paper (see Sections~\ref{sec.calc.delta.classic} and~\ref{RP.sec.pratique.weight}).
Moreover, they do not consider useful to multiply the penalty by a factor $C$ possibly different from one, contrary to what is suggested in RP.
The factor $C$ is crucial because it disconnects the choice of the weights from the overpenalization problem.


In order to select the correct model asymptotically with probability one, Shao \cite{Sha:1996} proposed to use RP with the $\mnefr$ out of $n$ bootstrap and provided a sufficient condition on $\mnefr$ to achieve model consistency.
Thanks to the unified approach for all the exchangeable resampling weights provided in this paper, Shao's condition can be rewritten as $C=1 \gg \CWinf$ (see Remark~\ref{RP.rem.R1W.R2W}), which corresponds to the known fact that model consistency requires overpenalization within a factor tending to infinity with $n$ \cite{Aer_Cla_Har:1999}.
Hence, we conjecture that RP with a constant $C \gg \CWinf$ is model consistent for most exchangeable $W$, which may improve Shao's penalties since Efr($\mnefr$) weights are probably not the best weights in terms of accuracy (see Section~\ref{RP.sec.calc.delta}) and variability\footnote{Taking into account all the data for computing the resampling penalty with Efr($\mnefr$) weights is computationally costly when $n/\mnefr$ is large.}.

\subsection{Resampling Penalization in the general prediction framework}
\label{RP.sec.discussion.classif}
As mentioned in Section~\ref{sec.cadreRP.cadre}, Resampling Penalization is a general-purpose method which is definitely not restricted to the histogram selection problem.
The purpose of this subsection is to define properly RP in the general prediction framework and to discuss briefly what differences can be expected compared to the histogram selection framework.

\subsubsection{Framework} \label{RP.sec.discussion.classif.cadre}
Suppose we observe some data $(X_1,Y_1), \ldots, (X_n,Y_n) \in \X \times \Y$ independent with common distribution $P$.
The goal is to predict $Y$ given $X$ where $(X,Y) \sim P$ is independent of the data.
The quality of a predictor $t: \X \mapsto \Y$ is measured by the prediction loss $ P \gamma(t) \egaldef \E_{(X,Y)} \croch{ \gamma(t,(X,Y)) }$ where $(X,Y) \sim P$ and $\gamma$ is a given contrast function.
Typically, $\gamma(t,(x,y))$ measures the discrepancy between $t(x)$ and $y$.
The excess loss is defined as $\perte{t} \egaldef P\gamma\paren{t} - \inf_{t: \X \flens \Y} P\gamma\paren{t}$, even if $\bayes = \arg\min_t \set{P\gamma\paren{t}}$ is not well-defined.
Classical examples are least-squares regression where $\Y = \R$ and $\gamma(t,(x,y)) = (t(x) - y)^2$ and binary supervised classification where $\Y = \set{0,1}$ and $\gamma(t,(x,y)) = \un_{t(x) \neq y}$ is the 0-1
contrast.

A general prediction algorithm $\ERM$ is then defined as a function associating a predictor to any data sample. In order to simplify the presentation, algorithms are assumed to depend only on the empirical distribution $P_n = n^{-1} \sum_{i=1}^n \delta_{(X_i,Y_i)}$ as an input\footnote{Otherwise, we can consider algorithms whose input is any weighted sample.}.
For instance, the empirical risk minimizer over a set $S_m$ of predictors is defined as $\ERM_m (P_n) \egaldef \arg\min_{t \in S_m} P_n \gamma\paren{t}$, provided the minimum in $S_m$ exists and is unique.

Let us assume that a collection of algorithms $\paren{\ERM_m}_{\mM_n}$ is given. The goal is to select some data-dependent $\mh \in \M_n$ minimizing the prediction loss $P \gamma\paren{\ERM_m (P_n)}$.
The penalization method consists in selecting
\begin{equation*}
\mh \in \arg\min_{\mM_n} \set{ P_n \gamma\paren{ \ERM_m \paren{ P_n }} + \pen(m) } , \end{equation*}
where $\pen : \M_n \flens \R$ is a penalty function, possibly data-dependent.
Since the goal is to minimize the prediction loss, the {\em ideal penalty} is
\[ \penid(m) \egaldef (P - P_n) \gamma\paren{ \ERM_m (P_n) } = F_m(P,P_n) \]
which cannot be used because it depends on the unknown distribution $P$.
When $\M_n$ is not too large (for instance, when $\card(\M_n) \leq C n^{\alpha}$ for some positive constants $C,\alpha$), a natural strategy is to define $\pen(m)$ as an estimator of $\penid(m)$ with a bias as small as possible.

\subsubsection{Definition of Resampling Penalization}
As detailed in Section~\ref{sec.cadreRP.heur}, the resampling heuristics can be used for estimating $\E\croch{\penid(m)} = \E\croch{F_m(P,P_n)}$, leading to the following procedure.
\begin{proc}[Resampling Penalization] \label{RP.def.proc.gal}
\hfill
\begin{enumerate}
\item Replace $\M_n$ by
\begin{equation*}
\Mh_n = \left\{ \mM_n \telque \ERM_m(P_n) \mbox{ is well-defined } \right\} .
\end{equation*}
\item Choose a resampling scheme, that is the distribution $\loi(W)$ of a weight vector $W$.
\item Choose a constant $C \geq \CWinf$.
\item Compute the following resampling penalty for each $\mM_n$:
\begin{equation} \label{RP.def.pen.gal}
\pen(m) = C \Es \croch{ P_n \gamma \paren{ \ERM_m \paren{ \Pnb} } - \Pnb \gamma \paren{\ERM_m \paren{\Pnb}} } ,
\end{equation}
where $\Pnb \egaldef n^{-1} \sum_{i=1}^n W_i \delta_{(X_i,Y_i)}$.
\item Select $\mh \in \arg\min_{m \in \Mh_n} \set{ P_n \gamma\paren{ \ERM_m \paren{ P_n }} + \pen(m) }$.
\end{enumerate}
\end{proc}

As for the histogram selection problem, two possible problems have to be solved.
%
%
First, $\ERM_m (\Pnb)$ may not be well-defined for a.e. $W$ even if $\mMh_n$.
A way to define properly the resampling penalty for every $\mMh_n$ such that $\ERM_m (\Pnb)$ is well-defined for every $W \in (0,+\infty)^n$ is suggested in \cite[Section~8.1]{Arl:2007:phd}.
This assumption is satisfied by regressograms (hence, in the framework of the rest of the paper) for which the suggest of \cite[Section~8.1]{Arl:2007:phd} yields exactly the penalty \eqref{RP.def.pen.his}.

%
Second, the constant $\CWinf$ such that \eqref{RP.def.pen.gal} estimates unbiasedly $\penid(m)$ when $C = \CWinf$ is required in Procedure~\ref{RP.def.proc.gal}.
For the histogram selection problem, the explicit expression of \CWinf\ follows from Propositions~\ref{RP.pro.EpenRP-Epenid} and~\ref{RP.pro.comp.Epen.Ep2}.
In general, the asymptotic theory of exchangeable bootstrap empirical processes \cite[Theorem~3.6.13]{vdV_Wel:1996} suggests that $\CWinf=1$ if $ \var(W_1) \ll 1 $, which holds for the classical weights Efr, Rad, Poi and Rho; nevertheless, asymptotic control on the bias is not sufficient when the collection of algorithms is allowed to depend on the sample size $n$, as in the histogram selection problem.
Therefore, further theoretical investigations would be useful to compute the theoretical value of \CWinf\ to be used in Procedure~\ref{RP.def.proc.gal}.
From the practical point of view, the data-driven calibration algorithm of \cite{Arl_Mas:2008} can be used for choosing the constant $C$ in front of the resampling penalty.

\vspace*{-3pt}
\subsubsection{Model selection properties of Resampling Penalization}

The theoretical validity of Procedure~\ref{RP.def.proc.gal} is only proved for histogram model selection in this paper, because precise non-asymptotic controls of the ideal penalty and its resampling counterpart are needed.
To our knowledge, the only known result about model selection with Resampling Penalization was that RP with the classical bootstrap weights (Efr) is asymptotically optimal for selecting among maximum likelihood estimators in \cite{Shi:1997}, assuming that the distribution $P$ belongs to some parametric family of densities.

RP can be conjectured to enjoy adaptivity properties for a wide class of model selection problems for two main reasons.
First, RP relies on the resampling idea which is known to be robust in a wide variety of frameworks; Theorems~\ref{RP.the.oracle_traj_non-as} and~\ref{RP.the.holder} have confirmed the robustness of RP to heteroscedasticity, whereas RP has not been designed specifically for least-squares regression with heteroscedastic data.
Second, several of the key concentration inequalities used to prove Theorems~\ref{RP.the.oracle_traj_non-as} and~\ref{RP.the.holder} have been extended in \cite[Propositions~8 and~10]{Arl_Mas:2008} to a general framework including bounded regression and binary classification.

%
As mentioned at the end of Section~\ref{RP.sec.discussion.classif.cadre}, Procedure~\ref{RP.def.proc.gal} should be restricted to choosing among a number of algorithms at most polynomial in $n$.
Indeed, when $\card(\M_n)$ is larger, estimating unbiasedly $\penid$ can yield strong overfitting \cite{Bir_Mas:2006}.
Therefore, RP must be modified for large collections $\M_n$.
We suggest to group algorithms according to some modelling complexity index $C_m$, such as the dimension of $S_m$ if $\ERM_m$ is the empirical risk minimizer over some vector space $S_m$; then, for every $C \in \mathcal{C}_n = \set{C_m \telque \mM_n}$, define $\ERM_C \egaldef \ERM_{\mh(C)}$ where $\mh(C) \in \arg\min_{C_m = C} P_n \gamma\paren{ \ERM_m (P_n)}$; finally, apply Procedure~\ref{RP.def.proc.gal} to the collection $\paren{\ERM_C}_{C \in \mathcal{C}_n}$, assuming that $\card\paren{ \mathcal{C}_n}$ is at most polynomial in $n$.

\vspace*{-3pt}
\subsubsection{Related penalties for classification} \label{RP.sec.discussion.classif.classif}
In the classification framework, RP should be compared to several classical resampling-based penalization methods.
First, RP with Efr weights was first introduced by Efron \cite{Efr:1983} and called bootstrap penalization; its main drawback is its bias (as for the histogram selection problem), which can be corrected in several ways, using for instance the double bootstrap penalization or the .632 bootstrap \cite{Efr:1983}. Nevertheless, the computational cost of the double bootstrap is heavy and the general validity of the .632 bootstrap is questionable because of its poor theoretical grounds.

Second, the global Rademacher complexities were introduced in order to obtain theoretically validated model selection procedures in classification \cite{Kol:2001,Bar_Bou_Lug:2002}.
They are resampling estimates of
\[ \penidglo(m) \egaldef \sup_{t \in S_m} \set{(P - P_n) \gamma(t)} \geq (P - P_n) \gamma\paren{\ERM_m(P_n)} = \penid(m) , \]
with Rad weights; more recently, Fromont \cite{Fro:2007} generalized global Rademacher complexities to a wide family of exchangeable resampling weights and obtained non-asymptotic oracle inequalities.
Nevertheless, global complexities (that is, estimates of $\penidglo$) are too large compared to $\penid$ so that they cannot achieve fast rates of estimation when the margin condition \cite{Mam_Tsy:1999} holds.

Therefore, localized penalties taking into account the closeness between $\ERM_m(P_n)$ and $\bayes$ have been introduced, in particular local Rademacher complexities \cite{Lug_Weg:2004,Bar_Bou_Men:2005,Bar_Men_Phi:2004,Kol:2006}; these papers proved sufficiently tight oracle inequalities to ensure that the final prediction loss can achieve fast rates.
Nevertheless, local Rademacher complexities are computationally heavy and depend on several constants which are difficult to calibrate.


RP aims at combining the advantages of these three approaches in classification.
From the practical point of view, RP is computationally tractable (see Section~\ref{RP.sec.pratique.cost}) and reasonably easy to calibrate (see Section~\ref{RP.sec.pratique.const}).
Compared to global Rademacher complexities, resampling penalties estimate directly $\penid$, so that RP should be able to achieve fast rates of estimation when the margin condition holds.
Finally, contrary to the bootstrap penalty, RP can be used with several resampling weights including i.i.d. Rademacher weights (Rad), so that the bias of RP may not have to be corrected.

\subsection{Conclusion}
\label{RP.sec.discussion.conclu}
This article intends to help the practicioner to answer the following question: When should Resampling Penalization be used? To sum up, we list below the advantages and drawbacks of RP \latin{vs.} the classical methods.

\vspace*{-3pt}
\subsubsection*{Advantages of RP} 
\vspace*{-6pt}
\begin{itemize}
\item generality: well-defined in almost any framework.
\item robustness and versatility: designed for the cautious user.
\item adaptivity to several properties, in particular heteroscedasticity and smoothness of the target.
\item flexibility: possibility of overpenalization, either for non-asymptotic prediction or for identification.
\end{itemize}

\vspace*{-9pt}
\subsubsection*{Drawbacks of RP} 
\vspace*{-6pt}
\begin{itemize}
\item computation time: one may prefer $V$-fold procedures such as $V$-fold cross-validation or $V$-fold penalties \cite{Arl:2008a}.
\item possibly outperformed by Mallows' $C_p$ (for easy problems) or \latin{ad hoc} procedures (in some particular frameworks, when some information on the data is available).
\end{itemize}

\section{Proofs} \label{RP.sec.preuves}

\vspace*{-3pt}
\subsection{Notation} \label{RP.sec.proof.nota}
\vspace*{-3pt}

Before starting the proofs, we introduce some additional notation and conventions:
\begin{itemize}
\item The letter $L$ denotes ``some positive absolute constant, possibly different from some place to another''.
In the same way, a positive constant which depends on $c_1, \ldots, c_k$ is denoted by $L_{c_1, \ldots, c_k}$; if \hyp{A} denotes a set of assumptions, $L_{\hyp{A}}$ denotes any positive constant depending on the parameters appearing in \hyp{A}.
\item By convention, $\infty \un_{E}$ and $\un_{E}/0$ are both equal to zero when the event $E$ does not hold.
\item For any $x \in \R$, $x_+ \egaldef \maxi{x}{0} = \max(x,0)$ and $x_- \egaldef \maxi{(-x)}{0}$.
\item For any non-negative random variable $Z$, $\einvz{\loi(Z)} \egaldef \E\croch{Z} \E \croch{Z^{-1} \un_{Z>0}} $.
\item For any model $\mM_n$,
\begin{gather*}
 p_1(m) \egaldef P \paren{\gamma(\ERM_m) - \gamma(\bayes_m)} \qquad  p_2(m) \egaldef P_n \paren{\gamma(\bayes_m) - \gamma(\ERM_m)} \\
\delc(m) \egaldef (P_n - P) \paren{ \gamma(\bayes_m) - \gamma(\bayes) } .
\end{gather*}
\item Histogram-specific notation: for any $q>0$, $\mM_n$, $\lamm$ and any random variable $Z$,
\begin{align*}
\El\croch{Z} &\egaldef \E \croch{Z \sachant \paren{\un_{X_i \in \Il}}_{1 \leq i \leq n, \, \lamm} }
\qquad \qquad
\norm{Z}_{q}^{(\Lambda_m)} \egaldef \El \croch{ \absj{Z}^q }^{1/q}
\\
m_{q,\lambda} &\egaldef \norm{ Y -
\bayes_m(X) }_{q,\lambda} \egaldef \left( \E \left[ \absj{Y -
\bayes_m(X)}^q \sachant X \in \Il \right] \right)^{1/q}
\\
S_{\lambda,1} &\egaldef \sum_{X_i \in \Il} \paren{Y_i - \betl} \qquad \mbox{and} \qquad S_{\lambda,2} \egaldef \sum_{X_i \in \Il} \carre{Y_i - \betl} . \end{align*}
\item Conventions for $p_1$ and $p_2$ when $\ERM_m$ is not well-defined (in the histogram framework):
\begin{align*}
\punmin (m) &\egaldef \punzero(m) + \sum_{\lamm}
\pl \carre{\sigl} \un_{\phl=0} \\
\mbox{with} \quad \punzero (m) &\egaldef \sum_{\lamm}
\frac{\pl \un_{\phl>0} }{(n \phl)^2} S_{\lambda,1}^2
\\ \mbox{and} \quad \widetilde{p}_2(m) &\egaldef p_2(m) + \frac{1}{n} \sum_{\lamm} \carre{\sigl} \un_{n \phl = 0}
\end{align*}
Note that $p_1(m) = \punzero (m) = \punmin (m)$ and $p_2(m) = \widetilde{p}_2(m)$ are well-defined when $\ERM_m$ is uniquely defined, and other models are always removed from $\M_n$.
The above convention is only important when writing expectations, so it is merely technical.
In the following, $\punmin$ (resp. $\widetilde{p}_2$) will often be written simply $p_1$ (resp. $p_2$).
\end{itemize}

Using the above notations, $p_1(m)$ and $p_2(m)$ can now be computed explicitly for histogram models.
For any $\mM_n$ such that $\min_{\lamm} \phl >0$,
\begin{align} \label{VFCV.eq.p1.hist}
p_1(m) &= \sum_{\lamm} \pl \paren{\betl - \bethl}^2 = \frac{1}{n} \sum_{\lamm} \paren{ \frac{\pl}{\phl} \frac{S_{\lambda,1}^2}{n \phl} }
\\ \label{VFCV.eq.p2.hist}
p_2(m) &= \sum_{\lamm} \phl \paren{\betl - \bethl}^2 = \frac{1}{n} \sum_{\lamm} \paren{ \un_{n\phl > 0} \frac{S_{\lambda,1}^2 }{n \phl} }
\end{align}
since $\bethl - \betl = S_{\lambda,1} / (n \phl)$.

\vspace*{-2pt}
\subsection{General framework} \label{RP.sec.preuves.gal}
\vspace*{-3pt}

The main results (Theorems~\ref{RP.the.oracle_traj_non-as} and~\ref{RP.the.holder}) actually are corollaries of a more general oracle inequality (Lemma~\ref{RP.le.gal}).
First, two different assumption sets under which Lemma~\ref{RP.le.gal} holds are stated in this subsection.
The first one \textbf{(Bg)} deals with bounded data, the second one \textbf{(Ug)} with unbounded data.

\vspace*{-2pt}
\subsubsection{Bounded assumption set \textbf{(Bg)}}
\vspace*{-3pt}

\makeatletter
\setlength\leftmargini   {34\p@}
\makeatother
\begin{enumerate}
\item[] There is some noise: $\norm{\sigma(X)}_2>0$.
\item[\hypPpoly] Polynomial size of $\M_n$: $\card(\M_n) \leq \cM n^{\aM}$.
\item[\hypPrich] Richness of $\M_n$: $\exists m_0 \in \M_n$ s.t. $D_{m_0} \in \croch{ \sqrt{n};c_{\mathrm{rich}} \sqrt{n} }$.
\item[\hypPech] The weight vector $W$ is exchangeable, among Efr, Rad, Poi, Rho and Loo.
\item[\hypPconst] The constant $C$ is well chosen: $\eta \CWinf \geq C \geq \CWinf$.
\item[\hypAb] Bounded data: $\norm{Y_i}_{\infty} \leq A < \infty$.
\item[\hypAml] Local moment assumption: there exist $a_{\ell} ,\xi_{\ell}, D_0 \geq 0$ such that for every $q \geq 2$, for every $\mM_n$ such that $D_m \geq D_0$,
\begin{gather*}
P^{\ell}_m(q) \egaldef \frac{\sqrt{D_m \sum_{\lamm} m_{q,\lambda}^4}}{\sum_{\lamm} m_{2,\lambda}^2 } \leq a_{\ell} q^{\xi_{\ell}}
 . \end{gather*}
\item[\hypAp] Polynomially decreasing bias: there exist $\beta_1\geq\beta_2>0$ and $\cbiasmaj,\cbiasmin>0$ such that, for every $\mM_n$, \[ \cbiasmin D_m^{-\beta_1} \leq \perte{\bayes_m} \leq \cbiasmaj D_m^{-\beta_2} . \]
\item[\hypQ] There exist $\cQpm > 0$ and $D_0 \geq 0$ such that for every $\mM_n$ with $D_m \geq D_0$,
\begin{equation*}
\Qmp \egaldef \frac{n \E\croch{p_2(m)}}{D_m} = \frac{1}{D_m} \sum_{\lamm} \sigl^2 \geq \cQpm > 0 . \end{equation*}
\item[\hypArXl] Lower regularity of the partitions for $\loi(X)$: there exists $\crXl>0$ such that for every $\mM_n$, $D_m \min_{\lamm} \pl \geq \crXl$.
\end{enumerate}

\subsubsection{Unbounded assumption set \textbf{(Ug)}}
\hypAb\ is replaced in \hypBg\ by
\makeatletter
\setlength\leftmargini   {38\p@}
\makeatother
\begin{enumerate}
\item[\hypAsigmax] Noise-level bounded from above: $\sigma^2(X) \leq \sigmax^2 < \infty$ a.s.
\item[\hypAciblemax] Bound on the target function: $\norm{\bayes}_{\infty} \leq A < \infty$.
\item[\hypAgeps] Global moment assumption for the noise: there exist $a_{g \epsilon}, \xi_{g \epsilon} \geq 0$ such that for every $q \geq 2$,
\begin{equation*}
P^{g \epsilon} (q) \egaldef \norm{\epsilon}_q \leq a_{g \epsilon} q^{\xi_{g \epsilon}} .
\end{equation*}
\item[\hypAdel] Global moment assumption for the bias: there exists $\cdelmglo>0$ such that for every $\mM_n$ with $D_m \geq D_0$,
\begin{equation*} \norm{\bayes-\bayes_m}_{\infty} \leq \cdelmglo \norm{\bayes(X) - \bayes_m(X)}_2 . \end{equation*}
\end{enumerate}
\makeatletter
\setlength\leftmargini   {22\p@}
\makeatother

\subsubsection{General result}
\begin{lemma} \label{RP.le.gal}
Let $n \in \N\backslash\set{0}$, $\gamma_0>0$ and $\mh$ be defined by Procedure~\ref{RP.def.proc.his}. Assume that either \hypBg\ or \hypUg\ holds with constants independent of $n$.

Then, there exists a constant $K_1$ \textup{(}that depends on $\gamma_0$ and all the constants in \hypBg\ \textup{(}resp. \hypUg\textup{)}, but not on $n$\textup{)} such that
\begin{equation} \label{RP.eq:oracle_non-as_x_traj}
 \perte{\ERM_{\mh}} \leq \crochb{ 2 \eta - 1 + \paren{\ln(n)}^{-1/5} }\inf_{\mM_n} \set{ \perte{\ERM_m} }
\end{equation}
holds with probability at least $1 - K_1 n^{-\gamma_0}$.
\end{lemma}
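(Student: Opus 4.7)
The strategy is the classical one for leading-constant-one oracle inequalities via penalization, with the resampling-specific ingredients supplied by Propositions~\ref{RP.pro.EpenRP-Epenid}, \ref{RP.pro.comp.Epen.Ep2} and~\ref{RP.pro.conc.penRP}. First, I would write the defining inequality $P_n\gamma(\ERM_{\mh}) + \pen(\mh) \leq P_n\gamma(\ERM_m) + \pen(m)$ for every $m \in \Mh_n$, and use the contrast decomposition coming from \eqref{eq.penid.histos}:
\[ \penid(m) = \delc(m) + p_1(m) + p_2(m) , \qquad P\gamma(\ERM_m) - P\gamma(\bayes) = \delc(m) + (P-P_n)\gamma(\bayes) + p_1(m) . \]
After rearranging, the event $\perte{\ERM_{\mh}} \leq K \perte{\ERM_m}$ reduces to controlling, uniformly in $\mM_n$, the four quantities $\pen(m) - (p_1(m)+p_2(m))$, $\pen(m)$ itself relative to $\perte{\ERM_m}$, the centered increments $\delc(m)$, and the event $\{\mh \in \Mh_n\}$.

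The second step is to compare expectations. Propositions~\ref{RP.pro.EpenRP-Epenid} and~\ref{RP.pro.comp.Epen.Ep2}, combined with \hypPconst\ ($\CWinf \leq C \leq \eta \CWinf$), give
\[ \E\croch{\pen(m)} = \frac{C}{\CWinf} \, \E\croch{p_1(m)+p_2(m)} \, \paren{1 + O\paren{(n \min_{\lamm} \pl)^{-1/4}}} . \]
Under \hypAp, \hypArXl\ and \hypQ\ one has $\E[p_1+p_2] \asymp D_m/n$, so on models with $D_m \geq (\ln n)^{\gamma_1}$ the relative error is $O((\ln n)^{-1/5})$, yielding $\E\croch{\pen(m)} \in \croch{(1-\varepsilon_n)\E[p_1+p_2],\, (\eta+\varepsilon_n)\E[p_1+p_2]}$. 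The lower bound of \hypAp\ together with \hypPrich\ guarantees the existence of a pivot $m_0$ with $D_{m_0} \asymp \sqrt{n}$ whose bias $\perte{\bayes_{m_0}}$ is comparable to the variance $\E[p_2(m_0)]$, and ensures that $\inf_{\mM_n}\perte{\ERM_m}$ is not attained by a pathologically small model --- as in Remark~\ref{RP.rem.oracle.traj.sansAp}, models with $D_m \leq (\ln n)^{\gamma_1}$ contribute only a lower-order remainder.

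The third step is the uniform concentration. For each fixed $m$, Proposition~\ref{RP.pro.conc.penRP} gives deviations of $\pen(m)$ around $\El[\pen(m)]$ of order $(\ln n)^{\xi_\ell + 1} (A_n D_m)^{-1/2} \E[p_2(m)]$; similar Bernstein-type bounds (derived from the U-statistic representation of $p_1,p_2$ conditionally on $(\un_{X_i \in \Il})$, and standard concentration for the linear functional $\delc(m)$) give comparable control of $p_1(m), p_2(m)$ and $\delc(m)$ around their conditional expectations. Under \hypAb\ (bounded case) these are immediate; under \hypUg\ the sub-Gaussian moment assumption \hypAgeps\ combined with \hypAdel\ and the local moment bound \hypAml\ yields Bernstein-type inequalities via truncation at level $\ln n$, losing only logarithmic factors. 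A union bound over $\mM_n$ --- whose size is polynomial by \hypPpoly\ --- is then paid for by taking $\gamma \gg \gamma_0 + \alpha_{\M}$ in each deviation inequality, which costs only constants absorbed into $\varepsilon_n$. Lemmas~\ref{RP.le.einv.binom}--\ref{RP.le.einv.Poi} are used to control the $R_{1,W}, R_{2,W}$ factors uniformly in the realized $\phl$, and \hypArXl\ ensures $\Prob(\exists \lamm,\, n\phl < 3) \leq L n^{-2}$ on the pivot model and on models with $D_m \leq c n/\ln n$.

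Combining: on a high-probability event, $\pen(m) \leq (\eta + \varepsilon_n)(p_1(m)+p_2(m)) + r_n(m)$ with $r_n(m) = o(\perte{\ERM_m})$, and $\pen(m) \geq (1-\varepsilon_n)(p_1(m)+p_2(m)) - r_n(m)$. Plugging this into the defining inequality at $m = m^\star \in \arg\min \perte{\ERM_m}$ gives \eqref{RP.eq:oracle_non-as_x_traj} with leading constant $2\eta - 1 + (\ln n)^{-1/5}$. \textbf{The main obstacle} is the uniform concentration of the resampling penalty simultaneously for all models in $\Mh_n$, with deviations small relative to $\E[p_2(m)] \asymp D_m/n$: Proposition~\ref{RP.pro.conc.penRP} provides the required $A_n^{-1/2}$ gain (reflecting the double-integration of resampling), but one must track carefully how the restriction to $\Mh_n$ interacts with models whose minimum bin mass $n\phl$ is just above the threshold $3$, since the bounds $R_{1,W}+R_{2,W}$ blow up in that regime for Efr and Poi weights.
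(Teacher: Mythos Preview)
Your overall architecture is the paper's: decompose via $\penid'(m)=p_1(m)+p_2(m)-\delc(m)$, control each piece by concentration, and plug into the basic inequality \eqref{RP.eq.oracle.pr.1}. But two structural points are missing, and the ``main obstacle'' you name is precisely where the gap lies.

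First, the paper does \emph{not} try to prove $\pen(m)\in[(1-\varepsilon_n),(\eta+\varepsilon_n)]\,(p_1+p_2)(m)$ uniformly over $\Mh_n$. It proves this only on the band $\widetilde{\M}_n=\{(\ln n)^{\gamma_1}\le D_m\le c\,n/\ln n\}$, where \hypArXl\ and Lemma~\ref{RP.le:threshold} force $\min_{\lamm} n\phl\gtrsim \ln n$, so Proposition~\ref{RP.pro.comp.Epen.Ep2}'s $(n\phl)^{-1/4}$ remainder is harmless. It then argues \emph{separately} that (i) $\mh\in\widetilde{\M}_n$ and (ii) the oracle lies in $\widetilde{\M}_n$. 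Your final paragraph implicitly uses the two-sided penalty bound at $\mh$ without first placing $\mh$ in the band; this is exactly what the three-part structure of the paper's proof is for.

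Second, for large models ($D_m>c\,n/\ln n$, hence possibly $\min_{\lamm} n\phl$ just above $3$), the upper/lower penalty control via Proposition~\ref{RP.pro.comp.Epen.Ep2} fails --- the $(n\phl)^{-1/4}$ error is $O(1)$ --- and your concern about $R_{1,W}+R_{2,W}$ for Efr/Poi is well-founded. The paper's resolution is Lemma~\ref{RP.le.CWinf.minor}: under the threshold $\min_{\lamm} n\phl\ge 3$ built into $\Mh_n$, one has the \emph{one-sided} bound $\El[\pen(m)]\ge \tfrac{5}{4}\,\El[p_2(m)]$ for all the listed weights. Combined with Proposition~\ref{RP.pro.conc.penRP} and Proposition~\ref{RP.pro.conc.penid}, this gives $\pen(m)-p_2(m)\gtrsim \E[p_2(m)]\gtrsim (\ln n)^{-1}$ on $\Omega_{n,\gamma_0}$, which beats $\crit''(m_0)$ and rules out large $\mh$. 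Without this lemma (or an equivalent), your plan has no mechanism to exclude $\mh$ from the large-model regime.

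Minor: the concentration of $p_1,p_2$ is not just ``similar Bernstein-type bounds'' --- it is Proposition~\ref{RP.pro.conc.penid}, and for the oracle-side argument on large models you need specifically \eqref{RP.eq.conc.p1.min.2}, which handles $p_1$ when $B_n$ is only $O(1)$.
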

Lemma~\ref{RP.le.gal} is proved in Section~\ref{RP.sec.proof.le.gal}.

\begin{remark} \label{RP.rem.oracle.traj.sansAp}
If the lower bound in \hypAp\ is removed from the assumption set,
then there exist constants $\gamma_1,\gamma_2>0$ (depending only on $\xi_{\ell}$, resp. on $\xi_{\ell}$ and $\xi_{g \epsilon}$) and an event of probability at least $1 - K_1 n^{-\gamma_0}$ on which
\begin{equation} \label{RP.eq:oracle_non-as_x_traj.sansAp}
 \perte{\ERM_{\mh}} \leq \crochb{ 2 \eta - 1 + \paren{\ln(n)}^{-1/5} } \hspace{-0.2cm} \mathop{\inf_{\mM_n}}_{D_m \geq \paren{\ln(n)}^{\gamma_1}} \set{ \perte{\ERM_m} } + \frac{\paren{\ln(n)}^{\gamma_2}}{n} .
\end{equation}
This assertion is proved in Section~\ref{sec.proof.rem.oracle.traj.sansAp}.
\end{remark}

\begin{remark} \label{RP.rem.oracle.traj.convention}
In the infimum in \eqref{RP.eq:oracle_non-as_x_traj}, $\ERM_m$ may not be well-defined for some $\mM_n$. By convention $\perte{\ERM_m}$ is defined as $+\infty$ for these $m$.

From the proof of Lemma~\ref{RP.le.gal}, there exists a constant $c>0$ (depending on $\aM$, $\gamma_0$ and $\crXl$) such that every model of dimension smaller than $c n \paren{\ln(n)}^{-1}$ belongs to $\Mh_n$ on the event where \eqref{RP.eq:oracle_non-as_x_traj} holds. For each of these models,
\[ \perte{\ERM_m} = \perte{\bayes_m} + \punzero(m) = \perte{\bayes_m} + \punmin(m) \]
so that the infimum can be restricted to models of dimension smaller than $c n \paren{\ln(n)}^{-1}$ with any of these conventions for $\perte{\ERM_m}$.
\end{remark}


The main results of the paper (Theorems~\ref{RP.the.oracle_traj_non-as} and~\ref{RP.the.holder}) can now be proved, which is done in Sections~\ref{sec.proof.the.oracle_traj_non-as}--\ref{RP.sec.proof.holder}.

First, the assumptions of Theorem~\ref{RP.the.oracle_traj_non-as} imply \hypBg.
Second, the alternative assumption sets stated in Section~\ref{RP.sec.main.hyp.alt} imply \hypBg.
Third, the assumptions of Theorem~\ref{RP.the.holder} imply \hypBg\ except the lower bound in \hypAp, so that Remark~\ref{RP.rem.oracle.traj.sansAp} can be used instead of Lemma~\ref{RP.le.gal}.

\subsection[Proof of Theorem~1]{Proof of Theorem~\ref{RP.the.oracle_traj_non-as}} \label{sec.proof.the.oracle_traj_non-as}

Lemma~\ref{RP.le.gal} is applied with $\gamma_0=2$. In order to deduce \eqref{RP.eq.oracle_traj_non-as}, it remains to show that \hypAml\ and \hypQ\ are satisfied. Both hold with $D_0=1$ since for every $\mM_n$,
\begin{align} \label{eq.maj.Plmq}
P^{\ell}_m(q) &= \frac{\sqrt{ \sum_{\lamm} m_{q,\lambda}^4}} {\sqrt{D_m} \Qmp }
\leq \frac{ \norm{Y - \bayes_m(X)}_{\infty}^2 }
{ \Qmp } \leq \frac{4 A^2}{\Qmp} \\ \notag
\Qmp &\egaldef \frac{1}{D_m} \sum_{\lamm} \croch{ \paren{\sigla}^2 + \paren{\sigld}^2} \geq \sigmin^2 .
\end{align}

Let $\Omega_n$ be the event on which \eqref{RP.eq.oracle_traj_non-as} holds true. Then,
\begin{align*}
\E \croch{ \perte{\ERM_{\mh}} } &= \E \croch{ \perte{\ERM_{\mh}} \un_{\Omega_n} } + \E \croch{ \perte{\ERM_{\mh}} \un_{\Omega_n^c} } \\
&\leq \croch{ 2\eta - 1 + \varepsilon_n } \E \croch{ \inf_{\mM_n} \set{ \perte{\ERM_m} } } + A^2 K_1 \Prob\paren{\Omega_n^c} \end{align*}
which proves \eqref{RP.eq.oracle_class_non-as}.
Following Remark~\ref{RP.rem.oracle.traj.convention}, \eqref{RP.eq.oracle_class_non-as} also holds with $\M_n$ replaced by
\[ \setb{ \mM_n \telque D_m \leq c(\aM,\crXl) n \paren{\ln(n)}^{-1} } \]
and the convention $p_1(m) = \punzero(m)$. \qed

\subsection[Proof of Theorem~1: alternative assumptions]{Proof of Theorem~\ref{RP.the.oracle_traj_non-as}: alternative assumptions} \label{RP.sec.preuves.Theoremalt}
In this section, the statements of Section~\ref{RP.sec.main.hyp.alt} are proved.

\subsubsection{No uniform lower bound on the noise-level}
When $\sigmin = 0$ in \hypAn, Lemma~\ref{RP.le.minQmp} below proves that \hypQ\ also holds with $D_0 = L_{\hypBg}$.
Therefore, using \eqref{eq.maj.Plmq}, \hypAml\ holds with the same $D_0$. \qed
\begin{lemma} \label{RP.le.minQmp}
Let $\X \subset \R^k$, $\mM_n$, and assume that positive constants $ \crDu, \alpha_d, \crLu, K_{\sigma}, J_{\sigma}$ exist such that
\makeatletter
\setlength\leftmargini   {31\p@}
\makeatother
\begin{itemize}
\item[\hypArDu] $\max_{\lamm}\set{\diam(\Il)} \leq \crDu D_m^{-\alpha_d} \diam(X) ,$
\item[\hypArLu] $\max_{\lamm}\set{\Leb(\Il)} \leq \crLu D_m^{-1}$ and
\item[\hypAsig] $\sigma$ is piecewise $K_{\sigma}$-Lipschitz with at most $J_{\sigma}$ jumps.
\end{itemize}
Then,
\[ \Qmp \geq \frac{\Leb(\X) \norm{\sigma}^2_{L^2(\Leb)}}{2 \crLu} - \frac{ K_{\sigma}^2 \paren{\crDu}^2 \diam(\X)^2 }{D_m^{2 \alpha_d} } - \frac{J_{\sigma} \norm{\sigma(X)}_{\infty}^2}{2D_m} . \]
\end{lemma}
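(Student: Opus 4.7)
The plan is to lower bound $\Qmp = D_m^{-1}\sum_{\lamm} \sigl^2$ by approximating, on each cell free of discontinuities of $\sigma$, the conditional $P$-mean $\sigl^2 \geq \E[\sigma^2(X) \mid X \in I_\lambda]$ by the Lebesgue mean of $\sigma^2$ on $I_\lambda$, at the price of an error term of order $\diam(I_\lambda)^2$.

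First, split $\Lambda_m = \Lambda_m^{\mathrm{g}} \sqcup \Lambda_m^{\mathrm{j}}$, where $\Lambda_m^{\mathrm{j}}$ collects the at most $J_\sigma$ cells touching a jump of $\sigma$, and discard the jump cells from $\Qmp$ via $\sigl^2 \geq 0$. For $\lambda \in \Lambda_m^{\mathrm{g}}$, the function $\sigma^2$ is continuous on $\overline{I_\lambda}$ and attains its infimum at some $x_\lambda$. Since $\sigl^2$ is (at least) a weighted average of the values of $\sigma^2$ on $I_\lambda$, monotonicity of conditional expectation gives $\sigl^2 \geq \sigma(x_\lambda)^2$. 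Combining the Lipschitz estimate $\sigma(x) \leq \sigma(x_\lambda) + K_\sigma \diam(I_\lambda)$ (valid since $\sigma \geq 0$ is $K_\sigma$-Lipschitz on $I_\lambda$) with the elementary $(u+v)^2 \leq 2u^2 + 2v^2$ yields, for every $x \in I_\lambda$,
\[ \sigma(x)^2 \leq 2\sigma(x_\lambda)^2 + 2 K_\sigma^2 \diam(I_\lambda)^2 . \]
Integrating against $\Leb$ on $I_\lambda$ and solving for $\sigma(x_\lambda)^2$ gives the key local bound
\[ \sigl^2 \geq \frac{1}{2 \Leb(I_\lambda)} \int_{I_\lambda} \sigma^2 \, d\Leb - K_\sigma^2 \diam(I_\lambda)^2 . \]

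Finally, I would substitute $1/\Leb(I_\lambda) \geq D_m / \crLu$ from \hypArLu\ and $\diam(I_\lambda)^2 \leq (\crDu)^2 \diam(\X)^2 D_m^{-2\alpha_d}$ from \hypArDu, sum over $\lambda \in \Lambda_m^{\mathrm{g}}$, and divide by $D_m$. The Lebesgue integrals assemble into $\int_\X \sigma^2 \, d\Leb = \Leb(\X) \norm{\sigma}^2_{L^2(\Leb)}$ minus the contribution of the jump cells, which is bounded by $J_\sigma \norm{\sigma(X)}^2_\infty \crLu / D_m$ using \hypArLu\ again; after division by $2\crLu$ this is exactly the $-J_\sigma \norm{\sigma(X)}^2_\infty / (2 D_m)$ correction in the statement, while the $\diam^2$ error terms sum to at most $K_\sigma^2 (\crDu)^2 \diam(\X)^2 D_m^{-2\alpha_d}$. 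The argument presents no real obstacle — the only noteworthy point is that the elementary $(u+v)^2 \leq 2u^2 + 2v^2$ step produces the prefactor $1/(2\crLu)$ rather than $1/\crLu$ in front of the Lebesgue integral; everything else is bookkeeping with the two regularity assumptions.
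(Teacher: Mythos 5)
Your proof is correct: the split into jump cells (at most $J_\sigma$, discarded) and Lipschitz cells, the minimum-point comparison $\sigl^2 \geq \sigma(x_\lambda)^2$, the elementary bound $(u+v)^2 \leq 2u^2 + 2v^2$ yielding the local estimate $\sigl^2 \geq \frac{1}{2\Leb(I_\lambda)}\int_{I_\lambda}\sigma^2\,d\Leb - K_\sigma^2\diam(I_\lambda)^2$, and the final bookkeeping via \hypArLu\ and \hypArDu\ all check out and reproduce the stated bound term by term with exactly the right constants. The paper relegates the proof to the technical appendix \cite{Arl:2008b:app}, so no literal comparison is possible here, but the precise match of all three terms — including the factor $1/(2\crLu)$ that you correctly trace to the $(u+v)^2 \leq 2u^2 + 2v^2$ step — indicates this is the intended argument.
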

Lemma~\ref{RP.le.minQmp} is proved in the technical appendix \cite{Arl:2008b:app}.
\begin{remark}
Since $\norm{\sigma(X)}_2 >0$ and $\sigma$ is piecewise Lipschitz, $\norm{\sigma}_{L^2(\Leb)}>0$. Thus, the lower bound on $\Qmp$ is positive when $D_m$ is large enough.
\end{remark}

\subsubsection{Unbounded data}
We still use Lemma~\ref{RP.le.gal}, but the proof is a little longer and requires the following Lemma~\ref{RP.le.hypAdel.suff} which is proved in the technical appendix \cite{Arl:2008b:app}.
\begin{lemma} \label{RP.le.hypAdel.suff}
Assume that $\X \subset \R$ is bounded and the following:
\makeatletter
\setlength\leftmargini   {36\p@}
\makeatother
\begin{enumerate}
\item[\hypAlip] $\exists B, B_0, c_J \,{>}\,0$ such that $\bayes\,{:}\, \X \,{\flens}\, \R$ is $B$-Lipschitz, piecewise $C^1$ and non-constant \textup{(}that is, $\pm \bayes^{\prime} \geq B_0$ on some interval $J \subset \X$
with $\Leb(J) \,{\geq}\, c_J$\textup{)}.
\item[\hypAreg] Regularity of the partitions for $\Leb$: $\exists \crLl, \crLu>0$ such that
\begin{equation*}
\forall \mM_n, \, \forall \lamm, \quad \crLl D_m^{-1} \leq \Leb(\Il) \leq \crLu D_m^{-1} .
\end{equation*}
\item[\hypAd] Density bounded from below: $\exists c_X^{\min}>0$, $\forall I \subset \X$, $P(X \in I) \geq c_X^{\min} \Leb(I) $.
\end{enumerate}
\makeatletter
\setlength\leftmargini   {22\p@}
\makeatother

Then, \hypAdel\ holds true, that is, for every model $S_m$ of dimension $D_m \geq D_0$, \begin{gather*} \norm{\bayes-\bayes_m}_{\infty} \leq \cdelmglo \norm{\bayes(X) - \bayes_m(X)}_2 \\ \mbox{with} \quad \cdelmglo = \paren{\frac{\crLu}{\crLl}}^{3/2} \frac{B \sqrt{24}}{B_0 \sqrt{c_X^{\min} c_J}} \quad \mbox{and} \quad D_0 \egaldef 4 \crLu c_J^{-1} .
\end{gather*}
\end{lemma}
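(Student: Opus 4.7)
The plan is to separately upper-bound $\norm{\bayes-\bayes_m}_{\infty}$ and lower-bound $\norm{\bayes(X)-\bayes_m(X)}_2$, both at scale $D_m^{-1}$, and then take the ratio; the dimension cancels, yielding a constant $\cdelmglo$ that depends only on the shape parameters. The sup-norm bound uses only the Lipschitz assumption \hypAlip; the $L^2$ bound is the delicate part and exploits the strict monotonicity of $\bayes$ on the interval $J$ together with the density lower bound \hypAd.

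For the sup-norm bound, on each bin $\Il$ the value $\betl = \E\croch{\bayes(X) \mid X \in \Il}$ is a weighted average of $\bayes$ over $\Il$, hence lies in $\croch{\inf_{\Il}\bayes,\sup_{\Il}\bayes}$; using $\absj{\bayes(x)-\bayes(y)}\leq B\absj{x-y}$ and $\Leb(\Il)\leq \crLu/D_m$ gives $\norm{\bayes-\bayes_m}_{\infty}\leq B\crLu/D_m$.

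For the $L^2$ lower bound, I discard all contributions except those from bins entirely contained in $J$. Since $J$ is a single interval and each bin has Lebesgue measure in $\croch{\crLl/D_m,\crLu/D_m}$, at most two bins meet $J$ without being contained in it, so the total Lebesgue measure of bins-in-$J$ is at least $c_J-2\crLu/D_m$, which is at least $c_J/2$ once $D_m \geq D_0 = 4\crLu/c_J$; dividing by the bin-size maximum gives at least $D_m c_J/(2\crLu)$ such bins. On each $\Il\subset J$, I write
\[
 P(X\in\Il)\,\E\croch{(\bayes(X)-\betl)^2 \mid X\in\Il}=\int_{\Il}(\bayes-\betl)^2 f_X\, d\Leb \geq c_X^{\min}\int_{\Il}(\bayes-\overline{\bayes}_{\Il})^2\, d\Leb,
\]
where $\overline{\bayes}_{\Il} = \Leb(\Il)^{-1}\int_{\Il}\bayes\, d\Leb$ is the Lebesgue mean; the inequality uses \hypAd\ together with the fact that $\overline{\bayes}_{\Il}$ minimises $c\mapsto\int_{\Il}(\bayes-c)^2 d\Leb$, so replacing $\betl$ by $\overline{\bayes}_{\Il}$ can only decrease the integral. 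The classical identity $\int_{I}(f-\overline{f})^2 d\Leb = (2\Leb(I))^{-1}\iint_{I\times I}(f(x)-f(y))^2 dx\,dy$ combined with $\absj{\bayes(x)-\bayes(y)}\geq B_0\absj{x-y}$ for $x,y\in\Il\subset J$ (piecewise $C^1$ regularity and $\absj{\bayes'}\geq B_0$ on $J$) then gives
\[
 \int_{\Il}(\bayes-\overline{\bayes}_{\Il})^2 d\Leb \geq \frac{B_0^2\Leb(\Il)^3}{12}\geq \frac{B_0^2\crLl^3}{12 D_m^3}.
\]

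Summing over the at least $D_m c_J/(2\crLu)$ bins inside $J$ gives $\norm{\bayes(X)-\bayes_m(X)}_2^2 \geq c_X^{\min}B_0^2 c_J\crLl^3/(24\crLu D_m^2)$, and dividing the squared sup-norm bound by this produces exactly $\cdelmglo^2 = 24 B^2\crLu^3/(c_X^{\min} B_0^2 c_J\crLl^3)$. The most delicate step is the per-bin lower bound, since $\betl$ is defined with respect to $f_X$ rather than Lebesgue: the trick is to first pay the multiplicative price $c_X^{\min}$ to switch measures, then invoke the minimising property of the Lebesgue mean to replace the centering constant for free before applying the standard variance inequality for monotone functions.
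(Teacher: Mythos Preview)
Your proof is correct and recovers exactly the constants $\cdelmglo = (\crLu/\crLl)^{3/2} B\sqrt{24}/(B_0\sqrt{c_X^{\min}c_J})$ and $D_0=4\crLu c_J^{-1}$ stated in the lemma, which strongly indicates you have found the intended argument. The paper defers the proof to the technical appendix \cite{Arl:2008b:app}, so a line-by-line comparison with the paper itself is not possible here; but the exact match of constants leaves little room for a genuinely different route.
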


\paragraph{Pathwise oracle inequality}
We prove that \eqref{RP.eq.oracle_traj_non-as} holds with probability $1 - K_1 n^{-\gamma_0}$ for a general $\gamma_0$, since it will be required for proving a classical oracle inequality below.
First, \hypAml, \hypQ\ and \hypAgeps\ hold since for every $\mM_n$
\begin{align*}
P^{\ell}_m(q) &= \frac{\sqrt{ \sum_{\lamm} m_{q,\lambda}^4}} {\sqrt{D_m} \Qmp }
\leq \frac{ \paren{ 2A + \cgauss \sqrt{q} \sigmax }^2 }
{ \Qmp } \leq \frac{q L_{\cgauss,\sigmax,A}}{\Qmp}
\\
\Qmp &\geq \sigmin^2 \\
P^{g \epsilon} (q) &\leq \sigmax \cgauss \sqrt{q} .
\end{align*}

Second, Lemma~\ref{RP.le.hypAdel.suff} (with \hypAlip, \hypAreg\ and \hypAd) shows that \hypAdel\ holds with $ \cdelmglo = L_{\hypUg}$ and $D_0 = L_{\hypUg}$.

\paragraph{Classical oracle inequality}
Let $\Omega_n$ be the event on which \eqref{RP.eq.oracle_traj_non-as} holds true with $\gamma_0 = 6+\aM$. As in the bounded case, it suffices to upper
bound
\begin{align*}
\El \croch{ \perte{\ERM_{\mh}} \un_{\Omega_n^c} } &\leq \sqrt{\Prob(\Omega^c)} \sqrt{\El\left[ \perte{\ERM_{\mh}}^2\right]}
\qquad \mbox{by Cauchy-Schwarz} \\[3pt]
&\leq \sqrt{K_1} n^{-\gamma_0/2}
\sqrt{\El\croch{ 2 \norm{\bayes}_{\infty}^2 + 2 p_1(\mh)^2 }}  \\[3pt]
&\leq L_{\hypUg} n^{-\gamma_0/2} \croch{ 1 + \sqrt{\El\crochbb{ \sum_{\mM_n} p_1(m)^2 \un_{\mMh_n} } } } .
\end{align*}
For every $\mMh_n$, a bound on $\El[(p_1(m))^2]$ is required.
Starting from \eqref{VFCV.eq.p1.hist},
\begin{align*}
\El\left[ p_1(m)^2 \right] &= \frac{1}{n^2} \sum_{\lamm}
\left( \frac{\pl}{\phl} \right)^2 \El \biggl[ \frac{S_{\lambda,1}^4}{(n\phl)^2}\biggr] + \frac{1}{n^2} \sum_{\lambda \neq \lambda^{\prime}} \left[ \frac{\pl} {\phl} \frac{p_{\lambda^{\prime}}} {\ph_{\lambda^{\prime}}} m_{2,\lambda}^2 m_{2,\lambda^{\prime}}^2 \right] \\[3pt]
&\leq \sum_{\lamm} \El \biggl[ \frac{S_{\lambda,1}^4}{(n\phl)^2} \biggr] + \sum_{\lambda \neq \lambda^{\prime}} \paren{ \sigmax^2 + (2A)^2 }^2
\\[3pt]
& \leq D_m^2 L_{\hypUg} \leq n^2 L_{\hypUg}
\end{align*}
since
\begin{gather*}
\El \left[ \frac{S_{\lambda,1}^4}{(n\phl)^2} \right] = \El \left[ \frac{ \left( \sum_{X_i \in \Il} (Y_i - \betl) \right)^4 }{(n\phl)^2} \right]
= \frac{m_{4,\lambda}^4}{n\phl} + \frac{6 (n\phl - 1) m_{2,\lambda}^4}{n\phl} \\[3pt]
\mbox{and} \quad D_m \sum_{\lamm} m_{q,\lambda}^4 \leq \paren{ a_{\ell} q^{\xi_{\ell}}}^2 \paren{ \sigmax^2 + (2A)^2 }^2 .
\end{gather*}

Hence, using that $\card(\M_n) \leq \cM n^{\aM}$,
\[ \El \croch{ \perte{\ERM_{\mh}} \un_{\Omega_n^c} } \leq L_{\hypUg} n^{1 + (\aM - \gamma_0)/2} \]
which proves \eqref{RP.eq.oracle_class_non-as}. \qed

\subsection[Proof of Theorem~2]{Proof of Theorem~\ref{RP.the.holder}} \label{RP.sec.proof.holder}
In this proof, \hypH\ denotes the set of assumptions made in Theorem~\ref{RP.the.holder}. \hypH\ implies all the assumptions of Theorem~\ref{RP.the.oracle_traj_non-as} except maybe the lower bound in \hypAp; indeed, \hypAd\ and the fact that all the models are ``regular'' imply \hypArXl.
Therefore, we can start from \eqref{RP.eq:oracle_non-as_x_traj.sansAp} in Remark~\ref{RP.rem.oracle.traj.sansAp} below Lemma~\ref{RP.le.gal} which does not require the lower bound in \hypAp\ to hold.
The constants $\gamma_i$ are absolute because the data are bounded.

Let $m(T_0) \in \M_n$ be the model of dimension $T_0^k$ closest to $R^{\frac{2k}{2\alpha + k}} n^{\frac{k}{2\alpha +k}} \sigmax^{\frac{-2k}{2\alpha +k}}$.
By definition of $T_0$ and $\M_n$,
\[ 2^{-1} R^{\frac{2}{2\alpha + k}} n^{\frac{1}{2\alpha +k}} \sigmax^{\frac{-2}{2\alpha +k}} \leq T_0 \leq 2 R^{\frac{2}{2\alpha + k}} n^{\frac{1}{2\alpha +k}} \sigmax^{\frac{-2}{2\alpha +k}} . \]
If $n \geq L_{\hypH,c}$, $T_0^k$ is larger than $\paren{\ln(n)}^{\gamma_1}$ and smaller than $c n \paren{\ln(n)}^{-1}$ . Hence, from the proof of Lemma~\ref{RP.le.gal}, $m(T_0) \in \Mh_n$ and $m(T_0)$ has a finite excess loss on the large probability event of Lemma~\ref{RP.le.gal}.
Moreover, \[ \perte{\ERM_{m(T_0)}} \leq \perte{\bayes_{m(T_0)}} + L \E\croch{\punzero(m(T_0))} \] when $n \geq L_{\hypH}$.
Since $\perte{\bayes_{m(T_0)}} \leq R^2 T_0^{-2\alpha}$ and
\begin{align*}
\E \croch{\punzero(m(T_0))} &\leq \paren{ \sup_{np \geq 0} \einvz{\mathcal{B}(n,p)}} \frac{1}{n} \sum_{\lambda \in \Lambda_{m(T_0)}} \paren{ \carre{ \sigla} + \carre{\sigld} }
\\ &\leq \frac{2 R^2 T_0^{1-2\alpha}}{n} + \frac{2 \sigmax^2 D_{m(T_0)}}{n}
\end{align*}
(the bound $\einvz{\mathcal{B}(n,p)} \leq 2$ coming from \cite[Lemma~4.1]{Gyo_etal:2002}),
an event of probability at least $1 - K_1^{\prime} n^{-2}$ exists on which
\[ \perte{\ERM_{\mh}} \leq K_2 R^{\frac{2k}{2\alpha +k}} n^{\frac{-2\alpha }{2\alpha +k}} \sigmax^{\frac{4\alpha }{2\alpha+k}} + \frac{\paren{\ln(n)}^{\gamma_2}}{n} , \]
where $K_2$ may only depend on $k$ and $\alpha$.
Note that the constant $K_1$ has been replaced by $K_1^{\prime} \geq K_1$ so that the probability bound $1 - K_1^{\prime} n^{-2}$ is nonpositive when $n$ is too small.
Enlarging $K_1^{\prime}$ once more, the term $\paren{\ln(n)}^{\gamma_2} n^{-1}$ can be dropped off by adding 1 to the constant $K_2$.
Then, taking expectations as in the proof of Theorem~\ref{RP.the.oracle_traj_non-as}, \eqref{RP.eq.oracle_class_non-as.hold} holds.

When \hypAsig\ holds, $\sigmax$ can be replaced by $\norm{\sigma}_{L^2(\Leb)}$ in the definition of $m(T_0)$. Then, for every $\lambda\in\Lambda_{m(T_0)}$ such that $\sigma$ does not jump on $\Il$,
\begin{align*} \carre{\sigla} \leq \max_{\Il} \sigma^2 &\leq \paren{ \frac{K_{\sigma} }{T_0} + \sqrt{ \int_{\X} \sigma^2(t) \Leb(dt)} }^2
\\
&\leq \paren{1+\theta^{-1}} \frac{K_{\sigma}^2 }{T_0^{2}} + (1+\theta) \int_{\X} \sigma^2(t) \Leb(dt)
\end{align*}
for every $\theta>0$ (since $\Leb(\X)=1$).
If $\sigma$ jumps on $\Il$ (and there exist at most $J_{\sigma}$ such $\lambda$), $\max_{\Il} \sigma^2 \leq \sigmax^2$.
Hence, taking $\theta=T_0^{-1}$,
\begin{align*}
\E \croch{\punzero(m(T_0))} &\leq \frac{2}{n} \paren{R^2 T_0^{1-2\alpha} + \sum_{\lambda\in\Lambda_{m(T_0)}} \carre{\sigla} } \\
&\leq \frac{2 R^2 T_0^{1-2\alpha}}{n} + \frac{2 D_{m(T_0)} \norm{\sigma}_{L^2(\Leb)}^2}{n} + \frac{L_{\hypH}} {n}
\end{align*}
and the end of the proof does not change. In this second case, \hypAn\ can also be removed because all the assumptions stated in the first part of Section~\ref{RP.sec.main.hyp.alt} are satisfied. \qed

\subsection{Additional probabilistic tools} \label{RP.sec.proof.tools}
Several probabilistic results are needed in addition to the ones of Section~\ref{RP.sec.tools} for proving Lemma~\ref{RP.le.gal}.
First, Proposition~\ref{RP.pro.conc.penid} below deals with concentration properties of $p_1$ and $p_2$.
Remark that concentration inequalities for $p_2$ can be obtained in a general framework \cite[Proposition~10]{Arl_Mas:2008}. On the contrary, we do not know any other non-asymptotic bound on the two-sided deviations of $p_1$.

\begin{proposition} \label{RP.pro.conc.penid}
Let $\gamma>0$ and $S_m$ be the model of histograms associated with some partition $(\Il)_{\lamm}$ of $\X$. 
Assume that $\min_{\lamm} \set{n \pl} \geq B_n $ and that positive constants $a_{\ell}$, $\xi_{\ell}$ exist such that \hypAml\ $\forall q \geq 2$, $P^{\ell}_m(q) \leq a_{\ell} q^{\xi_{\ell}}$.
Then, if $B_n \geq 1$, an event of probability at least $1 - L n^{-\gamma}$ exists on which
\begin{align} \notag 
\punmin(m) &\geq \E \croch{\punmin(m)} - L_{a_{\ell},\xi_{\ell},\gamma} \croch{ \frac{\paren{\ln(n)}^{\xi_{\ell} + 2}}{\sqrt{D_m}} + e^{-L B_n} } \E\croch{p_2(m)}
\\
\label{RP.eq.conc.p1.maj}
\punmin(m) &\leq \E \croch{\punmin(m)} + L_{a_{\ell},\xi_{\ell},\gamma} \croch{ \frac{\paren{\ln(n)}^{\xi_{\ell} + 2}}{\sqrt{D_m}} + \sqrt{D_m} e^{-L B_n} } \E\croch{p_2(m)}
\end{align}
\begin{equation*}
\absj{p_2(m) - \E [p_2(m)]} \leq L_{a_{\ell},\xi_{\ell},\gamma} \frac{\paren{\ln(n)}^{\xi_{\ell} + 1}}{\sqrt{D_m}} \E\croch{p_2(m)} .
\end{equation*}

Moreover, if $B_n > 0$, an event of probability at least $1-Ln^{-\gamma}$ exists on which
\begin{equation}
\label{RP.eq.conc.p1.min.2}
\punmin(m) \geq \paren{\frac{1}{2 + \frac{(\gamma+1) \ln(n)}{B_n} } - L_{a_{\ell},\xi_{\ell},\gamma} \croch{ \frac{\paren{\ln(n)}^{\xi_{\ell} + 2}}{\sqrt{D_m}} + e^{-L B_n} } } \E\croch{\widetilde{p_2}(m)} .
\end{equation}
\end{proposition}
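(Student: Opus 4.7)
The plan is a two-stage conditioning: first freeze the bin indicators $(\un_{X_i \in \Il})_{i,\lamm}$ (working under $\El$), then handle the outer randomness of the bin counts $(n\phl)_{\lamm}$. Under $\El$, for each $\lambda$ with $\phl > 0$ the variables $(Y_i - \betl)_{X_i \in \Il}$ are independent centered with $q$-th moment $m_{q,\lambda}^q$, so from \eqref{VFCV.eq.p2.hist}, $n p_2(m) = \sum_\lamm S_{\lambda,1}^2/(n\phl)$ is a sum of conditionally independent nonnegative r.v.'s with mean $m_{2,\lambda}^2$ and higher moments controlled by Rosenthal's inequality. Applying Bernstein's inequality to this sum, together with the bound $\sqrt{\sum_\lamm m_{4,\lambda}^4}/\sum_\lamm m_{2,\lambda}^2 \leq a_\ell 2^{\xi_\ell}/\sqrt{D_m}$ supplied by \hypAml, yields a deviation of order $(\ln n)^{\xi_\ell + 1}/\sqrt{D_m}$ times $\sum_\lamm m_{2,\lambda}^2$, which a direct computation identifies with $n \E[p_2(m)]$ up to a factor $1 + O(e^{-B_n})$. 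This proves the $p_2$ concentration.

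For $\punmin(m)$, \eqref{VFCV.eq.p1.hist} shows that on $\set{\min_\lamm \phl > 0}$ it equals $n^{-1}\sum_\lamm (p_\lambda/\phl) S_{\lambda,1}^2/(n\phl)$, differing from $p_2(m)$ only through the random weights $p_\lambda/\phl$. A multiplicative Chernoff bound on $n\phl \sim \mathcal{B}(n,p_\lambda)$, union-bounded over the at most $n$ bins, gives $\absj{\phl/p_\lambda - 1} \leq L\sqrt{(\ln n)/B_n}$ uniformly in $\lamm$ with probability $\geq 1 - L n^{-\gamma}$. Combining this perturbation of the weights with the conditional Bernstein analysis above introduces an extra $\sqrt{\ln n}$ factor, promoting the $(\ln n)^{\xi_\ell+1}$ of the $p_2$ bound to $(\ln n)^{\xi_\ell+2}$ in the $\punmin$ bound. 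Bins where $\phl = 0$ are treated separately: each such event has probability at most $e^{-np_\lambda} \leq e^{-B_n}$, and its contribution to $\punmin$ (through the convention $\punmin(m) = \punzero(m) + \sum_\lamm p_\lambda \sigl^2 \un_{\phl = 0}$) is controlled by summing $p_\lambda \sigl^2$ over empty bins. For the lower bound this gives the remainder $e^{-LB_n} \E[p_2(m)]$, while the upper bound \eqref{RP.eq.conc.p1.maj} requires an additional Cauchy--Schwarz step across $D_m$ bins that produces the extra $\sqrt{D_m}$ factor in front of $e^{-LB_n}$.

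The alternative lower bound \eqref{RP.eq.conc.p1.min.2} comes from a direct comparison: on the event $\min_\lamm \phl > 0$, $\punmin(m) \geq (\min_\lamm p_\lambda/\phl) \, p_2(m)$. Bernstein's upper tail for the binomial yields $\phl \leq 2p_\lambda + (\gamma+1)(\ln n)/n$ uniformly in $\lamm$ with probability $\geq 1 - L n^{-\gamma-1}$, giving $p_\lambda/\phl \geq (2 + (\gamma+1)(\ln n)/B_n)^{-1}$ uniformly, and the $p_2$ concentration from Step 1 closes the inequality (after adding back the $\sum_\lamm p_\lambda \sigl^2 \un_{\phl = 0}$ bins via the definition of $\widetilde{p_2}$). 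The main obstacle is the bookkeeping of remainders: one must calibrate the interplay between the inner Bernstein deviation $(\ln n)^{\xi_\ell+2}/\sqrt{D_m}$ and the outer exceptional term $e^{-LB_n}$ so that everything stays a multiple of $\E[p_2(m)]$, and in particular track the intrinsic asymmetry between the upper and lower deviations of $\punmin$ caused by the $1/\phl$ blowup when a bin is (nearly) empty.
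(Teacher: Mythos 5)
Your handling of the $p_2$ concentration is on the right track: conditioning on the bin indicators turns $n p_2(m) = \sum_{\lamm} S_{\lambda,1}^2/(n\phl)$ into a sum of conditionally independent nonnegative terms with conditional means $m_{2,\lambda}^2$, a Rosenthal/moment bound driven by \hypAml, converted to a tail at $q \asymp \ln n$, gives the $(\ln n)^{\xi_\ell+1}/\sqrt{D_m}$ rate, and $\sum_\lamm m_{2,\lambda}^2 = \parenb{1+\grandO(e^{-B_n})}\,n\E[p_2(m)]$ as you say. (Minor slip: controlling the variance of the sum invokes \hypAml\ at $q=4$, giving $a_\ell 4^{\xi_\ell}$, not $2^{\xi_\ell}$.) Your argument for \eqref{RP.eq.conc.p1.min.2} also works: a one-sided Bernstein bound $\phl \leq 2\pl + (\gamma+1)\ln(n)/n$ uniformly over $\lamm$ gives the prefactor $\parenb{2 + (\gamma+1)\ln(n)/B_n}^{-1}$ on $p_2(m)$, and combining with the $p_2$ concentration and $\E[p_2(m)] \geq (1-e^{-B_n})\E[\widetilde{p_2}(m)]$ closes it.

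The genuine gap is in the first two displays for $\punmin(m)$. You pass from $p_2$ to $\punmin$ via a uniform Chernoff bound $\absj{\phl/\pl - 1} \leq L\sqrt{\ln(n)/B_n}$ and treat $\pl/\phl$ as a bounded multiplicative perturbation of the weights. Two problems. First, that bound only holds when $B_n \gtrsim \ln n$ — for smaller $B_n$, Bernstein gives $\ln(n)/B_n$, not its square root — while the proposition allows $B_n \geq 1$. Second, and structurally, an $\ell^\infty$ control of $\pl/\phl$ can only yield $\absj{\punmin(m) - p_2(m)} \lesssim \sqrt{\ln(n)/B_n}\, p_2(m)$, hence a remainder of order $\sqrt{\ln(n)/B_n}\,\E[p_2(m)]$. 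The asserted remainder is $L\crochb{(\ln n)^{\xi_\ell+2}/\sqrt{D_m} + e^{-L B_n}}\E[p_2(m)]$, and the two are not comparable: with $B_n \asymp (\ln n)^3$ and $D_m \asymp \sqrt{n}$, the asserted remainder is polynomially small in $n$ while $\sqrt{\ln(n)/B_n}$ is of order $(\ln n)^{-1}$. The decisive feature of the target bound is the $D_m^{-1/2}$ in front of the polylogarithm, which reflects cancellation \emph{across bins} in the $X$-fluctuation term: setting $g_\lambda \egaldef \frac{np_\lambda}{n\phl}\un_{n\phl>0} - \einvz{\mathcal{B}(n,\pl)}$, the piece $\El[\punmin(m)] - \E[\punmin(m)]$ is (up to the empty-bin contribution) $n^{-1}\sum_\lamm m_{2,\lambda}^2\, g_\lambda$, a centered sum of $D_m$ negatively associated terms each of variance $\asymp 1/B_n$, whose typical size is $\E[p_2(m)](D_m B_n)^{-1/2}$, not $\E[p_2(m)]\sqrt{\ln(n)/B_n}$. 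Pulling $\max_\lamm \absj{\pl/\phl - 1}$ out of the sum throws this cancellation away. To obtain the stated rate you must bound moments of the full sum over $\lamm$ with the $1/\phl$ weights kept inside — in the spirit of the U-statistic moment bound from \cite[Lemma~5]{Arl:2008a} used in the visible proof of Proposition~\ref{RP.pro.conc.penRP} — noting that the $q$-th moments of $np_\lambda/(n\phl)\un_{n\phl>0}$ stay of order one only for $q \lesssim B_n/\ln B_n$, which is what generates the $e^{-L B_n}$ tail term (and, on the upper-tail side, the extra $\sqrt{D_m}$).
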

Proposition~\ref{RP.pro.conc.penid} is proved in \cite{Arl:2008b:app}.
Second, Lemmas~\ref{RP.le.conc.delta.borne} and~\ref{RP.le.conc.delc.nonborne} below provide concentration inequalities for $\delc(m)$, when the data are either bounded or unbounded.
\begin{lemma} \label{RP.le.conc.delta.borne}
Assume that $\norm{Y}_{\infty} \leq A < \infty$.
Recall that for every $\mM_n$, $\delc(m) = (P_n - P) \paren{\gamma\paren{\bayes_m} - \gamma\paren{\bayes}}$.
Then for every $x \geq 0$, an event of probability at least $1 - 2 e^{-x}$ exists on which
\begin{equation}
\label{RP.eq.conc.delta.borne.2} \forall \eta >0, \quad \absj{\delc(m)} \leq \eta \perte{\bayes_m} + \left( \frac{4}{\eta} + \frac{8}{3} \right) \frac{A^2 x}{n}
.
\end{equation}
In particular,
\begin{equation} \label{RP.eq.conc.delta.borne} \absj{\delc(m)} \leq \frac{\perte{\bayes_m}}{\sqrt{D_m}} + \frac{20}{3} \frac{A^2 }{ \Qmp } \frac{\El[p_2(m)]}{\sqrt{D_m} } x .\end{equation}
\end{lemma}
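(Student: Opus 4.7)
}
The plan is to apply Bernstein's inequality to the i.i.d.\ sum $n\delc(m) = \sum_{i=1}^n \bigl( f(X_i,Y_i) - Pf \bigr)$, where
\[ f(x,y) \egaldef \gamma(\bayes_m,(x,y)) - \gamma(\bayes,(x,y)) = (\bayes_m(x) - \bayes(x))\bigl(\bayes_m(x) + \bayes(x) - 2y\bigr) . \]
First I would compute the first two moments of $f$. Substituting $Y = \bayes(X) + \sigma(X)\epsilon$ gives the clean decomposition
\[ f(X,Y) = (\bayes_m(X) - \bayes(X))^2 - 2\sigma(X)\epsilon\,(\bayes_m(X) - \bayes(X)), \]
so that $Pf = \perte{\bayes_m}$ (the cross term vanishes since $\E[\epsilon\mid X]=0$), and
\[ \E[f^2] = \E[(\bayes_m - \bayes)^4] + 4\,\E[\sigma^2(\bayes_m - \bayes)^2] \leq 4A^2 \perte{\bayes_m} + 4A^2 \perte{\bayes_m} = 8A^2 \perte{\bayes_m}, \]
using $\norm{\bayes}_\infty \leq A$ (because $\bayes$ is a conditional expectation of a bounded variable), $\norm{\bayes_m}_\infty \leq A$ (since on histograms $\bayes_m$ averages $\bayes$ piecewise), and $\sigma^2(X) = \var(Y\mid X) \leq A^2$. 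From the same bounds one gets $\norm{f}_\infty \leq 8A^2$.

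Next I would apply Bernstein's inequality: for every $x \geq 0$, with probability at least $1 - 2e^{-x}$,
\[ |\delc(m)| \leq \sqrt{\frac{2\,\E[f^2]\,x}{n}} + \frac{\norm{f}_\infty\,x}{3n} \leq 4A\sqrt{\frac{\perte{\bayes_m}\,x}{n}} + \frac{8A^2 x}{3n}. \]
The square-root term is then split by the standard AM-GM inequality $2\sqrt{uv} \leq \eta u + v/\eta$ applied with $u = \perte{\bayes_m}$ and $v = 4A^2 x/n$:
\[ 4A\sqrt{\perte{\bayes_m}\,x/n} = 2\sqrt{\perte{\bayes_m}\cdot 4A^2 x/n} \leq \eta\,\perte{\bayes_m} + \frac{4A^2 x}{\eta\,n}, \]
which yields \eqref{RP.eq.conc.delta.borne.2}.

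Finally, \eqref{RP.eq.conc.delta.borne} is obtained by specializing to $\eta = 1/\sqrt{D_m}$, which gives the coefficient $4\sqrt{D_m} + 8/3 \leq 20\sqrt{D_m}/3$ for $D_m \geq 1$. Using the convention that $p_2$ denotes $\widetilde{p_2}$ (introduced in Section~\ref{RP.sec.proof.nota}), one has the identity $\E[p_2(m)] = (1/n)\sum_{\lamm}\sigl^2 = \Qmp D_m/n$, hence $A^2\sqrt{D_m}/n = A^2 \E[p_2(m)]/(\Qmp\sqrt{D_m})$, which rewrites the bound in the stated form. The result is essentially routine; the only mild subtlety is obtaining the sharp variance bound $\E[f^2] \leq 8A^2\perte{\bayes_m}$ (rather than the crude $16A^2\perte{\bayes_m}$), which uses the centering of $\epsilon$ conditional on $X$ and is what produces the constant $4/\eta$ (rather than $8/\eta$) in \eqref{RP.eq.conc.delta.borne.2}.
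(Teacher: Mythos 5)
Your proof is correct and follows essentially the same route as the paper, which simply refers to Bernstein's inequality (via \cite[Proposition~8]{Arl_Mas:2008}) and then plugs in $\eta = D_m^{-1/2}$, exactly as you do; the decomposition $f = (\bayes_m-\bayes)^2 - 2\sigma(X)\epsilon(\bayes_m-\bayes)$, the resulting variance bound $\E[f^2]\le 8A^2\perte{\bayes_m}$, and the AM-GM split are all the standard ingredients. One small imprecision worth noting: the tight sup-norm bound is $\norm{f}_\infty\le 4A^2$ (from $f=(\bayes_m-Y)^2-(\bayes-Y)^2$ with both squares in $[0,4A^2]$), and what Bernstein's inequality actually needs is the centered bound $\norm{f-\E f}_\infty\le 4A^2+\perte{\bayes_m}\le 8A^2$; your stated $\norm{f}_\infty\le 8A^2$ happens to coincide numerically but is not the quantity that feeds the $8/3$ term.
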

\begin{proof}[Proof of Lemma~\ref{RP.le.conc.delta.borne}]
\eqref{RP.eq.conc.delta.borne.2} essentially relies on Bernstein's inequality and is proved in details in \cite[Proposition~8]{Arl_Mas:2008}.
Then, \eqref{RP.eq.conc.delta.borne} follows from \eqref{RP.eq.conc.delta.borne.2} with $\eta=D_m^{-1/2}$ and the definition of $\Qmp$.
\end{proof}
\begin{lemma} \label{RP.le.conc.delc.nonborne}
Assume that positive constants $a_{g \epsilon}$, $\xi_{g \epsilon}$, $\sigmax$ and $\cdelmglo$ exist such that
\makeatletter
\setlength\leftmargini   {40\p@}
\makeatother
\begin{enumerate}
\item[\hypAgeps] $\forall q \geq 2$, $P^{g \epsilon} (q) \leq a_{g \epsilon} q^{\xi_{g \epsilon}} ,$
\item[\hypAsigmax] $\norm{\sigma(X)}_{\infty} \leq \sigmax ,$
\item[\hypAdel] $\norm{\bayes-\bayes_m}_{\infty} \leq \cdelmglo \norm{\bayes(X) - \bayes_m(X)}_2 .$
\end{enumerate}
\makeatletter
\setlength\leftmargini   {22\p@}
\makeatother
Then, for every $x \geq 0$, an event of probability at least $1 - e^{-x}$ exists on which
\begin{equation} \label{RP.eq.conc.delc.nonborne}
\absj{\delc(m)} \leq \frac{L_{ a_{g \epsilon}, \xi_{g \epsilon}, c_{\Delta,m}^{g} } x^{\xi_{g \epsilon} + 1/2 }}{\sqrt{D_m}} \croch{ \perte{\bayes_m} + \frac{\sigmax^2 }{\Qmp} \E\croch{p_2(m)} } .
\end{equation}

Moreover, if \hypAgeps\ and \hypAsigmax\ holds true, but \hypAdel\ is replaced by \hypAciblemax\ $\norm{\bayes}_{\infty} \leq A$, then, for every $x \geq 0$, an event of probability at least $1 - e^{-x}$ exists on which
\begin{equation} \label{RP.eq.conc.delc.nonborne.2}
\absj{\delc(m)} \leq L_{ a_{g \epsilon}, \xi_{g \epsilon}, A, \sigmax } n^{-1/2} x^{\xi_{g \epsilon} + 1/2} .
\end{equation}
\end{lemma}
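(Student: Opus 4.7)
The plan is to decompose $\delc(m)$ via the regression identity and to apply tailored Bernstein-type inequalities to each piece. Writing $f_m := \bayes_m - \bayes$ and using $Y_i - \bayes(X_i) = \sigma(X_i) \epsilon_i$ together with $\E[\epsilon \mid X] = 0$, one gets $\gamma(\bayes_m) - \gamma(\bayes) = f_m(X)^2 - 2 f_m(X) \sigma(X) \epsilon$, so that
\begin{equation*}
\delc(m) = (P_n - P)(f_m^2) - \frac{2}{n} \sum_{i=1}^n f_m(X_i) \sigma(X_i) \epsilon_i =: T_1 - 2 T_2,
\end{equation*}
the second piece already being centered. I would control $T_1$ and $T_2$ on two events of probability at least $1 - e^{-x}/2$ each and conclude \eqref{RP.eq.conc.delc.nonborne} by a union bound.

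For $T_1$, \hypAdel\ gives $\norm{f_m}_{\infty} \leq \cdelmglo \sqrt{\perte{\bayes_m}}$, hence $0 \leq f_m^2 \leq (\cdelmglo)^2 \perte{\bayes_m}$ and $\var(f_m^2) \leq \norm{f_m^2}_\infty \E[f_m^2] \leq (\cdelmglo)^2 \perte{\bayes_m}^2$. The standard (bounded) Bernstein inequality then yields $\absj{T_1} \leq L_{\cdelmglo} \paren{\perte{\bayes_m}\sqrt{x/n} + \perte{\bayes_m} x/n}$. For $T_2$, the summands $Z_i := f_m(X_i) \sigma(X_i) \epsilon_i$ are i.i.d.\ and centered, with $\var(Z) \leq \sigmax^2 \perte{\bayes_m}$ (using \hypAsigmax) and, by \hypAgeps\ combined with $\absj{f_m \sigma} \leq \norm{f_m}_\infty \sigmax$, $\norm{Z}_q \leq \norm{f_m}_\infty \sigmax a_{g\epsilon} q^{\xi_{g\epsilon}}$ for every $q \geq 2$. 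A generalized Bernstein inequality for sub-Weibull summands---either quoted from the literature or obtained ad hoc by truncating $\epsilon_i$ at a level $M \sim x^{\xi_{g\epsilon}}$, applying standard Bernstein to the bounded part, and bounding the tail via Markov's inequality against high moments of $\epsilon$---then gives
\begin{equation*}
\absj{T_2} \leq L_{a_{g\epsilon}, \xi_{g\epsilon}} \paren{ \sigmax \sqrt{\perte{\bayes_m} x / n} + \norm{f_m}_\infty \sigmax\, x^{\xi_{g\epsilon}+1/2}/\sqrt{n}} .
\end{equation*}

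To match the target form, substitute $\norm{f_m}_\infty \leq \cdelmglo \sqrt{\perte{\bayes_m}}$ in the remainder and apply $\sqrt{AB} \leq (A+B)/2$ to each square-root factor, choosing $A = \perte{\bayes_m}/\sqrt{D_m}$ and $B$ proportional to $\sigmax^2 \sqrt{D_m}/n$ (up to $(\cdelmglo)^2$) so that $A + B$ is a linear combination of the two terms appearing on the right-hand side of \eqref{RP.eq.conc.delc.nonborne} after using $\E[p_2(m)] = D_m \Qmp /n$ to rewrite $\sigmax^2 \sqrt{D_m}/n = (\sigmax^2/\Qmp) \E[p_2(m)]/\sqrt{D_m}$. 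The leftover $\perte{\bayes_m} x/n$ from $T_1$ is absorbed into $\perte{\bayes_m} x/\sqrt{D_m}$ via $D_m \leq n$ (models with $D_m > n$ are excluded from $\Mh_n$), and finally $\sqrt{x}, x \leq x^{\xi_{g\epsilon}+1/2}$ for $x \geq 1$ lets one pull out the common factor $x^{\xi_{g\epsilon}+1/2}$.

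The main technical obstacle is the sub-Weibull concentration used for $T_2$: because the summands are genuinely unbounded, standard Bernstein does not apply, and one must build or invoke a concentration inequality whose higher-order remainder carries precisely the exponent $x^{\xi_{g\epsilon}+1/2}/\sqrt{n}$ needed to telescope with the $\sqrt{x/n}$ Gaussian-type term under AM-GM---too slow a tail would inflate the $x$-exponent in the final bound, and too fast a tail is simply not available from \hypAgeps. The second statement \eqref{RP.eq.conc.delc.nonborne.2} follows the same scheme with \hypAciblemax\ replacing \hypAdel: since $\bayes_m(x) = \E[\bayes(X) \mid X \in \Il]$ on each piece of the histogram partition, one has $\norm{\bayes_m}_\infty \leq \norm{\bayes}_\infty \leq A$ and hence $\norm{f_m}_\infty \leq 2A$, so the preceding variance and moment bounds become purely global ones depending only on $A, \sigmax, a_{g\epsilon}, \xi_{g\epsilon}$; Bernstein for $T_1$ and sub-Weibull Bernstein for $T_2$ then produce $\absj{\delc(m)} \leq L(\sqrt{x/n} + x^{\xi_{g\epsilon}+1/2}/\sqrt{n})$, bounded in turn by $L n^{-1/2} x^{\xi_{g\epsilon}+1/2}$ using $\sqrt{x} \leq x^{\xi_{g\epsilon}+1/2}$ for $x \geq 1$.
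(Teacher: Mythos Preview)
Your decomposition $\delc(m)=T_1-2T_2$ with $f_m=\bayes_m-\bayes$ is exactly the one the paper uses (the paper writes $F_m=(\bayes_m-\bayes)^2-2\epsilon\sigma(\bayes_m-\bayes)$), and your handling of the final AM--GM step with $\theta=D_m^{-1/2}$ and of the second statement via $\norm{f_m}_\infty\leq 2A$ matches the paper as well. The substantive methodological difference is that the paper works in moments throughout: it invokes a Rosenthal/Marcinkiewicz--Zygmund type inequality (Lemma~8.18 of \cite{Arl:2007:phd}) giving $\norm{\delc(m)}_q \leq L\sqrt{q}\,n^{-1/2}\norm{F_m-\E F_m}_q$, bounds $\norm{F_m-\E F_m}_q$ by $\norm{f_m}_\infty^2+2\sigmax\norm{f_m}_\infty\norm{\epsilon}_q$ using \hypAgeps, and only at the very end converts the moment bound $\norm{\delc(m)}_q\leq L q^{\xi_{g\epsilon}+1/2}(\cdots)$ into a tail bound via the generic moments-to-tails lemma. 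This sidesteps precisely what you flag as the ``main technical obstacle'': there is no need to build or quote a sub-Weibull Bernstein inequality for $T_2$, because the $q$-moment route absorbs the heavy-tailed noise automatically through $\norm{\epsilon}_q\leq a_{g\epsilon}q^{\xi_{g\epsilon}}$. Your route is sound but slightly longer and leaves a small loose end: the $\perte{\bayes_m}x/n$ term from the bounded Bernstein on $T_1$ does not obviously collapse into $x^{\xi_{g\epsilon}+1/2}/\sqrt{n}$ for arbitrary $x$ (you would need $x\lesssim n$ or $\xi_{g\epsilon}\geq 1/2$), whereas the moment route produces no such term at all.
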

Lemma~\ref{RP.le.conc.delc.nonborne} is proved in Section~\ref{RP.sec.proof.conc}.
Third, Lemma~\ref{RP.le:threshold} ensures that empirical frequencies $n \phl$ are not too far from the expected ones $n \pl$.
\begin{lemma} \label{RP.le:threshold}
Let $(\pl)_{\lamm}$ be non-negative real numbers of sum 1, $(n\phl)_{\lamm}$ be a multinomial vector of parameters
$(n;(\pl)_{\lamm})$ and $\gamma>0$. Assume that $\card(\Lambda_m) \leq n$ and $\min_{\lamm} \set{n \pl} \geq B_n >0$. Then, an event of probability at least $1 - L n^{-\gamma}$ exists on which
\begin{align}
\label{RP.eq.threshold}
\min_{\lamm} \set{n \phl} \geq \frac{\min_{\lamm} \set{n \pl}}{2} - 2 (\gamma+1) \ln(n) .
\end{align}
\end{lemma}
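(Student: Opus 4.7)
The plan is to apply Bernstein's inequality to each marginal $n\phl \sim \mathcal{B}(n,\pl)$ separately, then union-bound over $\lamm$ (at most $n$ terms by assumption). Writing $B_n = \min_{\lamm} \{n\pl\}$, observe first that if $B_n < 4(\gamma+1)\ln(n)$, then $B_n/2 - 2(\gamma+1)\ln(n) \leq 0$, so \eqref{RP.eq.threshold} holds deterministically (using $n\phl \geq 0$) and there is nothing to prove. Hence I may assume $B_n \geq 4(\gamma+1)\ln(n)$.

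Next, I reduce the global claim to a per-$\lambda$ claim: since $n\pl \geq B_n$ for every $\lamm$, it suffices to prove the stronger per-coordinate estimate
\[
n\phl \;\geq\; \frac{n\pl}{2} - 2(\gamma+1)\ln(n)
\]
with probability $\geq 1 - n^{-3(\gamma+1)/2}$ for each $\lamm$. Because $n\phl$ is a sum of $n$ i.i.d. Bernoulli$(\pl)$ variables with variance $\pl(1-\pl) \leq \pl$ and individual range bounded by $1$, Bernstein's inequality gives
\[
\Prob\!\paren{ n\pl - n\phl \geq t} \;\leq\; \exp\!\paren{-\frac{t^{2}}{2n\pl + 2t/3}}
\]
for every $t \geq 0$.

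I apply this with $t \egaldef n\pl/2 + 2(\gamma+1)\ln(n)$. In the regime $n\pl \geq 4(\gamma+1)\ln(n)$ one has $t \leq n\pl$, so the denominator is at most $2n\pl + 2n\pl/3 = \tfrac{8}{3}n\pl$. Using the elementary inequality $(a+b)^{2} \geq 4ab$ with $a = n\pl/2$ and $b = 2(\gamma+1)\ln(n)$ yields
\[
t^{2} \;\geq\; 4 n\pl (\gamma+1)\ln(n).
\]
Combining the two bounds, the Bernstein exponent is at least
\[
\frac{4 n\pl(\gamma+1)\ln(n)}{(8/3)n\pl} \;=\; \frac{3}{2}(\gamma+1)\ln(n),
\]
so that the per-$\lambda$ failure probability is bounded by $n^{-3(\gamma+1)/2}$. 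A union bound over the at most $n$ indices in $\Lambda_m$ then gives a total failure probability of at most $n \cdot n^{-3(\gamma+1)/2} = n^{-(3\gamma+1)/2} \leq L\, n^{-\gamma}$, which proves \eqref{RP.eq.threshold}.

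There is no real obstacle here: the only mild care required is the case split on whether $B_n$ exceeds $4(\gamma+1)\ln(n)$ (so that the asserted lower bound has any content) and the choice of $t$ balancing a multiplicative deviation $n\pl/2$ against an additive logarithmic slack $2(\gamma+1)\ln(n)$; this choice is precisely what makes the Bernstein exponent grow like $(\gamma+1)\ln(n)$ uniformly across the range of admissible $\pl$'s, absorbing the $\card(\Lambda_m) \leq n$ union bound with room to spare.
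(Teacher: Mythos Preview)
Your proof is correct and follows essentially the same route as the paper: apply Bernstein's inequality to each marginal $n\phl \sim \mathcal{B}(n,\pl)$, then union-bound over the at most $n$ indices. The only cosmetic difference is that the paper applies Bernstein in its standard deviation form $n\phl \geq n\pl - \sqrt{2n\pl(\gamma+1)\ln n} - (\gamma+1)\ln(n)/3$ and then uses $\sqrt{2n\pl(\gamma+1)\ln n} \leq n\pl/2 + (\gamma+1)\ln n$, whereas you plug in a specific $t$ and bound the exponent directly (requiring your preliminary case split on $B_n$); both arrive at the same per-$\lambda$ bound and the same union-bound conclusion.
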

\begin{proof}[Proof of Lemma~\ref{RP.le:threshold}]
First, for every $\lamm$, Bernstein's inequality \cite[Proposition~2.9]{Mas:2003:St-Flour} applied to $n \phl$ shows that an event of probability at least $1 - 2n^{- (\gamma+1)}$ exists on which
\[ n \phl \geq n\pl - \sqrt{2 n \pl (\gamma+1) \ln(n)} - \frac{(\gamma+1) \ln(n)}{3} . \]
Since $\sqrt{2n \pl (\gamma +1) \ln(n) } \leq (n \pl)/2 + (\gamma +1) \ln(n) $,
\eqref{RP.eq.threshold} holds on an event of probability at least $1 - 2 \card(\Lambda_m) n^{- (\gamma+1)} \geq 1 - 2 n^{- \gamma}$.
\end{proof}


Finally, Lemmas~\ref{VFCV.le.calc.p2} and~\ref{RP.le.CWinf.minor} below are useful to compare the expectations of $p_1$ and $p_2$ on the one hand, and the expectations of $\pen$ and $\penid$ for possibly large models on the other hand.
\begin{lemma}[Lemma~7 of \cite{Arl:2008a}]\label{VFCV.le.calc.p2}
If $\min_{\lamm} \set{n \pl} \geq B \geq 1$,
\begin{equation*} 
\paren{ 1 - e^{-B} } \E \croch{\widetilde{p}_2(m)} \leq \E \croch{\punzero(m)} \leq \E \croch{\punmin(m)} \leq \paren{1 + \sup_{np \geq B} \delta_{n,p} } \E \croch{\widetilde{p}_2(m)}
\end{equation*}
where $\delta_{n,p}$ is the same as in \eqref{RP.eq.Ep2.Epenid}. A similar result holds with $p_2$ instead of $\widetilde{p}_2$ inside the expectation.
\end{lemma}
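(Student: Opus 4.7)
The plan is to compute each of the three expectations in closed form via conditional independence, then to combine Lemma~\ref{RP.le.einv.binom} (for the lower bound) with the identification of $\delta_{n,\pl}$ provided by Proposition~\ref{RP.pro.EpenRP-Epenid} (for the upper bound). Conditioning on the allocation $(\un_{X_i \in \Il})_{i,\lamm}$, the centering of the residuals gives $\El\croch{S_{\lambda,1}^2} = n\phl\sigl^2$; substituting this into the explicit formulas \eqref{VFCV.eq.p1.hist} and \eqref{VFCV.eq.p2.hist} and integrating against $n\phl \sim \mathcal{B}(n,\pl)$ yields
\begin{align*}
\E\croch{\widetilde{p}_2(m)} &= \frac{1}{n}\sum_{\lamm}\sigl^2, \\
\E\croch{\punzero(m)} &= \frac{1}{n}\sum_{\lamm}\sigl^2\,\einvz{\mathcal{B}(n,\pl)}, \\
\E\croch{\punmin(m)} &= \frac{1}{n}\sum_{\lamm}\sigl^2\,\bigl(\einvz{\mathcal{B}(n,\pl)} + n\pl(1-\pl)^n\bigr).
\end{align*}
The middle inequality $\E\croch{\punzero(m)} \leq \E\croch{\punmin(m)}$ is then immediate from these formulas.

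For the upper bound, I would match the expression for $\E\croch{\punmin(m)}$ above with Proposition~\ref{RP.pro.EpenRP-Epenid}. Since $\penid(m) = (P-P_n)\gamma(\bayes_m) + \punmin(m) + \widetilde{p}_2(m)$ (up to the convention used when some $\phl = 0$) and the centered term has zero expectation, comparing $\E\croch{\penid(m)} = n^{-1}\sum_{\lamm}(2+\delta_{n,\pl})\sigl^2$ with $\E\croch{\punmin(m)} + \E\croch{\widetilde{p}_2(m)}$ delivers $\E\croch{\punmin(m)} = n^{-1}\sum_{\lamm}(1+\delta_{n,\pl})\sigl^2$. Since every $\lamm$ satisfies $n\pl \geq B$, bounding each $\delta_{n,\pl}$ by $\sup_{np\geq B}\delta_{n,p}$ yields the rightmost inequality.

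For the lower bound, the ratio $\E\croch{\punzero(m)}/\E\croch{\widetilde{p}_2(m)}$ is a $\sigl^2$-weighted convex combination of $\einvz{\mathcal{B}(n,\pl)}$, so it suffices to bound each factor from below uniformly over $\lamm$ with $n\pl \geq B$. Writing $\einvz{\mathcal{B}(n,p)} = \einv{\mathcal{B}(n,p)}\cdot\Prob(n\phl>0)$ and combining the bound $\einv{\mathcal{B}(n,p)} \geq 1-e^{-np}$ of Lemma~\ref{RP.le.einv.binom} with $\Prob(n\phl>0) = 1-(1-p)^n \geq 1-e^{-np}$ gives a lower bound depending only on $np$, which $n\pl \geq B$ translates into one depending only on $B$. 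The main technical obstacle is recovering the sharp constant: the naive product of these two ingredients delivers only $(1-e^{-B})^2$, while reaching the stated linear form $(1-e^{-B})$ requires a finer computation --- either a direct expansion of $np\,\E\croch{\un_{Z>0}/Z}$ via the identity $(k+1)\binom{n+1}{k+1} = (n+1)\binom{n}{k}$ followed by re-indexing, or a sharper Jensen-type step that exploits the integer support of $Z$ on $\{Z>0\}$. Once this step is secured, assembling the three estimates into the claimed chain of inequalities is routine.
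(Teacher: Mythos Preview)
The paper does not prove this lemma here; it is quoted verbatim as Lemma~7 of \cite{Arl:2008a}. So there is no ``paper's proof'' to compare against, and I can only assess your argument on its own terms.

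Your computations of the three expectations are correct, and the upper bound via the identification $1+\delta_{n,\pl}=\einvz{\mathcal{B}(n,\pl)}+n\pl(1-\pl)^n$ (under the $\punmin/\widetilde{p}_2$ convention, consistently with the proof of Proposition~\ref{RP.pro.EpenRP-Epenid}) is clean and correct. The middle inequality is indeed trivial.

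Your diagnosis of the lower bound is the interesting part, and your suspicion is in fact \emph{sharper than you realize}: the termwise inequality $\einvz{\mathcal{B}(n,p)}\geq 1-e^{-np}$ is simply false. Take $n=2$, $p=1/2$, so $np=1=B$: then $\einvz{\mathcal{B}(2,1/2)}=1\cdot\bigl(\tfrac{1}{1}\cdot\tfrac{1}{2}+\tfrac{1}{2}\cdot\tfrac{1}{4}\bigr)=\tfrac{5}{8}=0.625$, whereas $1-e^{-1}\approx 0.632$. Hence no amount of combinatorial rearrangement (your $(k+1)\binom{n+1}{k+1}=(n+1)\binom{n}{k}$ idea, or a refined Jensen step) can recover the constant $1-e^{-B}$ term by term, and since the inequality in the lemma must hold for an arbitrary choice of $(\sigl^2)_{\lamm}$, a termwise bound is exactly what is required. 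Your product bound $(1-e^{-B})^2$, obtained from $\einvz{Z}=\einv{Z}\,\Prob(Z>0)$ together with \eqref{RP.eq.einv-binom.inf-sup} and $(1-p)^n\le e^{-np}$, is correct and is what your method actually delivers.

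For every use of Lemma~\ref{VFCV.le.calc.p2} in this paper (see Section~\ref{RP.sec.proof.le.gal.bounded}), only a positive constant depending on $B$ is needed, and $(1-e^{-B})^2$ is entirely sufficient. So your argument proves everything the present paper requires; the discrepancy in the exact constant is an issue for the statement as quoted from \cite{Arl:2008a}, not for your method.
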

\begin{lemma}\label{RP.le.CWinf.minor}
Assume that $W$ is a weight vector among Efr, Rad, Poi, Rho and Loo. Let $S_m$ be the model of histograms associated with the partition $(\Il)_{\lamm}$, $p_2(m) = P_n \paren{ \gamma(\bayes_m) - \gamma(\ERM_m)}$ and $\pen(m)$ be defined by \eqref{RP.def.pen.his} with $C=\CWinf$ (see Table~\ref{RP.TableR2}). Then, if $\min_{\lamm} \set{n\phl} \geq 3$, %
\begin{equation} \label{RP.eq.CWinf.minor}
\El \croch{\pen(m)} \geq \frac{5}{4} \El \croch{p_2(m)} .
\end{equation}
If $\min_{\lamm} \set{n\phl} \geq T$ for some positive $T$, \eqref{RP.eq.CWinf.minor} still holds for weight vectors among:
\begin{itemize}
\item Efr($\mnefr$) when $\mnefr n^{-1} \geq - T^{-1} \ln(3/4 - 2/T)$
\item Rad($p$) when $T \geq p^{-1} \ln[8/(3(1-p))]$
\item Poi($\mu$) when $T \geq 3$ and $\mu T \geq 1.61$
\item Rho($q_n$) when $T \geq n q_n^{-1} \ln[(4n)/(3(n-q_n))]$.
\end{itemize}
\end{lemma}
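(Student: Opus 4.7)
The plan is to reduce \eqref{RP.eq.CWinf.minor} to a pointwise inequality on the penalty coefficients and verify it case by case using Lemmas~\ref{RP.le.einv.binom}--\ref{RP.le.einv.Poi}.

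First, combining Proposition~\ref{RP.pro.EpenRP-Epenid} with the elementary identity $\El[S_{\lambda,1}^2] = n\phl \, \sigl^2$ (valid since $\min_{\lamm}\set{n\phl} \geq 3 > 0$ and the $(Y_i - \betl)$ for $X_i \in \Il$ are independent with variance $\sigl^2$ given the bin indicators) shows that both sides of \eqref{RP.eq.CWinf.minor} are weighted sums of the nonnegative quantities $\sigl^2$:
\[ \El[\pen(m)] = \frac{\CWinf}{n} \sum_{\lamm} \bigl( R_{1,W}(n,\phl) + R_{2,W}(n,\phl) \bigr) \sigl^2 , \qquad \El[p_2(m)] = \frac{1}{n} \sum_{\lamm} \sigl^2 . \]
Therefore \eqref{RP.eq.CWinf.minor} follows from the pointwise bound
\[ \CWinf \bigl( R_{1,W}(n,p) + R_{2,W}(n,p) \bigr) \geq 5/4 \qquad \text{whenever } np \geq T , \]
with $T = 3$ in the main statement and the weight-specific thresholds in the extensions.

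Second, Lemma~\ref{RP.le.calc.R1.R2} supplies explicit formulas for $R_{1,W}$ and $R_{2,W}$, derived by exchangeability of $(W_i)_{i \in \Il}$ (which makes $\E[W_1/\hat W_\lambda \mid X_1 \in \Il, \hat W_\lambda > 0] = 1$) and the conditional multinomial distribution of $(W_i)_{i \in \Il}$ given their sum. After multiplying by $\CWinf$, each case reduces to controlling a single $\einv{\cdot}$ quantity, with $k = np$. For Loo the calculation is elementary and yields $\CWinf r_W \geq 1 + (n-1)k/(n(k-1)) \geq 2$ for $k \leq n$. For Rad$(1/2)$, the sharp bound $\einv{\mathcal{B}(k, 1/2)} \geq 1$ for $k \geq 3$ from \eqref{aRP.eq.einv.sym} suffices. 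For Rho$(\lfloor n/2\rfloor)$, the bound $\einv{\mathcal{H}(n, k, \lfloor n/2\rfloor)} \geq 1 - e^{-k\lfloor n/2\rfloor/n}$ of Lemma~\ref{RP.le.einv.hypergeom}(1), or the Jensen bound $\einv \geq 1$ when $k > n - q$, combined with $k \geq 3$ and $n \geq 3$ is enough. For Efr$(n)$, the bound $\einv{\mathcal{B}(n, k/n)} \geq 1 - e^{-k}$ from Lemma~\ref{RP.le.einv.binom}, and for Poi$(1)$ the bound $\einv{\mathcal{P}(k)} \geq 1$ (valid since $k \geq 3 > 1.61$) from Lemma~\ref{RP.le.einv.Poi}, each completes the main statement. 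For the parameterized extensions, the same lower bounds, evaluated at the critical value $k = T$ and inverted for the relevant parameter, reproduce the displayed thresholds on $M_n$, $p$, $\mu$, $q_n$ after elementary algebra.

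The main obstacle will be the Efron and Poisson cases. The exact form of $R_{2,W}$ contains the probability of $\{\hat W_\lambda > 0\}$, namely $1 - (1 - k/n)^M$ and $1 - e^{-\mu k}$ respectively, which is strictly less than $1$; so the "$+1$" free boost that Loo, Rad, Rho supply via $\CWinf R_{2,W} = 1$ is no longer available. The full burden of reaching $5/4$ then falls on $\einv$, and producing the displayed thresholds $-T^{-1}\ln(3/4 - 2/T)$ and $\mu T \geq 1.61$ requires the binomial and Poisson lower bounds together with a careful accounting of the $(1 - k/n)^M$ and $e^{-\mu k}$ corrections.
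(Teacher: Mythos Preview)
Your approach is correct and is exactly what the paper does: the paper's one-line proof just says the lemma is a byproduct of the proof of Proposition~\ref{RP.pro.comp.Epen.Ep2}, i.e., of the explicit lower bounds on $\delta_{n,\phl}^{(\mathrm{penW})} = \CWinf(R_{1,W}+R_{2,W}) - 2$ obtained there via Lemma~\ref{RP.le.calc.R1.R2} and Lemmas~\ref{RP.le.einv.binom}--\ref{RP.le.einv.Poi}, together with the identity $\El[p_2(m)] = n^{-1}\sum_{\lamm}\sigl^2$. One small correction to your final paragraph: for Efr and Poi the shortfall $\CWinf R_{2,W} = 1 - (n\phl)^{-1} < 1$ comes from the factor $(1-1/(n\phl))$ in \eqref{RP.eq.calc.R1.R2.Efr} and \eqref{RP.eq.calc.R1.R2.Poi}, not from a $\Prob(\Wl>0)$ term; this does not affect the argument.
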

Lemma~\ref{RP.le.CWinf.minor} is proved in Section~\ref{RP.sec.calc.reech}.

\subsection[Proof of Lemma~7]{Proof of Lemma~\ref{RP.le.gal}} \label{RP.sec.proof.le.gal}
We first give the complete proof in the bounded case.
Then, we will explain how it can be extended to the unbounded case.

\subsubsection{Bounded case} \label{RP.sec.proof.le.gal.bounded}
For every $\mM_n$, define
\[ \penid^{\prime}(m) \egaldef p_1(m) + p_2(m) - \delc(m) = \penid(m) - (P - P_n) \gamma(\bayes) . \]
By definition of $\penid^{\prime}$ and $\mh$, for every $\mMh_n$,
\begin{equation}
\label{RP.eq.oracle.pr.1}
\perte{\ERM_{\mh}} - \paren{\penid^{\prime}(\mh) - \pen(\mh)} \leq \perte{\ERM_m} + \paren{\pen(m) - \penid^{\prime}(m)} . \end{equation}
The proof of Lemma~\ref{RP.le.gal} is divided into three main parts:
\begin{enumerate}
\item With a large probability, $\pen - \penid^{\prime}$ is negligible in front of $\perte{\ERM_m}$ uniformly over models $S_m$ of ``intermediate'' dimension, that is $\paren{\ln(n)}^{\gamma_1} \leq D_m \leq c n \paren{\ln(n)}^{-1}$ for some constants $c,\gamma_1>0$. This relies on the concentration inequalities and comparisons of expectations stated in Sections~\ref{RP.sec.tools} and~\ref{RP.sec.proof.tools}.
\item The model $\mh$ selected by Resampling Penalization has an ``intermediate'' dimension.
In order to prove this, a lower bound on \[ \crit^{\prime\prime}(m) \egaldef P_n \gamma\paren{\ERM_m} + \pen(m) - P_n \gamma\paren{\bayes} \] is proved for large and small models, and this bound is showed to be larger than $\crit^{\prime\prime}(m_0)$, where $S_{m_0}$ is the model of intermediate dimension belonging to the collection $\paren{S_m}_{\mM_n}$ according to assumption \hypPrich. Lemma~\ref{RP.le.CWinf.minor} is crucial at this point.
\item The oracle model (that is the one minimizing $\perte{\ERM_m}$) is also of ``intermediate'' dimension, which is proven similarly to point 2 with $\crit^{\prime\prime}(m)$ replaced by $\perte{\ERM_m}$.
\end{enumerate}

For every $\mM_n$, define
\[ A_n(m) \egaldef \min_{\lamm} \set{ n \phl } \qquad \mbox{and} \qquad B_n(m) = \min_{\lamm} \set{n\pl} . \]
Let $\Omega_{n,\gamma_0}$ be the event on which the concentration inequalities of Propositions~\ref{RP.pro.conc.penRP} and~\ref{RP.pro.conc.penid} and Lemmas~\ref{RP.le.conc.delta.borne} and~\ref{RP.le:threshold} hold for every $\mM_n$ with $\gamma = \aM+\gamma_0$ (or similarly $x = (\aM+\gamma_0) \ln(n)$ in Lemma~\ref{RP.le.conc.delta.borne}). Using assumption \hypPpoly, the union bound gives $\Prob\paren{\Omega_{n,\gamma_0}} \geq 1 - L_{\cM} n^{-\gamma_0}$.

\paragraph{1. $\pen$ is close to $\penid^{\prime}$ for intermediate models}
Let $c,\gamma_1>0$ be two constants to be chosen later, and consider $\widetilde{\M}_n$, the set of $\mM_n$ such that $\paren{\ln(n)}^{\gamma_1} \leq D_m \leq c n \paren{\ln(n)}^{-1}$.
According to \hypArXl, for every $m \in \widetilde{\M}_n$, $B_n(m) \geq \crXl c^{-1} \ln(n)$ so that \eqref{RP.eq.threshold} ensures that $A_n(m) \geq \ln(n)$ on $\Omega_{n,\gamma_0}$ if $c \leq L_{\crXl,\aM,\gamma_0}$. In particular, $\widetilde{\M}_n \subset \Mh_n$ on $\Omega_{n,\gamma_0}$.

Assume also that $n \geq \exp(D_0)$, so that $D_m \geq D_0$ for every $m \in \widetilde{\M}_n$ if $\gamma_1 \geq 1$.
Now, using both bounds on $D_m$,
\begin{equation*} \begin{split}
\max \left\{ \absj{\punmin(m) - \E\croch{\punmin(m)}} , \absj{p_2(m) - \E\croch{p_2(m)}} , \right. \\
\left. \absj{\delc(m)} , \absj{\pen(m) - \El\croch{\pen(m)}} \right\} \end{split}
\end{equation*} is smaller than $L_{\hypBg} \paren{\ln(n)}^{-1} \paren{ \perte{\bayes_m} + \E\croch{p_2(m)}}$ on $\Omega_{n,\gamma_0}$ provided that $c \leq L_{\crXl,\gamma}$ (to ensure that $B_n(m)$ is large enough) and $\gamma_1 \geq 2 \xi_{\ell} + 6$.
Fix now $c=L_{\crXl,\gamma}>0$ and $\gamma_1 = L_{\xi_{\ell}}$ satisfying these conditions.
Using Proposition~\ref{RP.pro.comp.Epen.Ep2}, Lemma~\ref{VFCV.le.calc.p2} and the lower bound on $B_n(m)$, we have for every $m \in \widetilde{\M}_n$
\[ \frac{- L_{\hypBg}}{\paren{\ln(n)}^{1/4}} \perte{\ERM_m} \leq (\pen-\penid^{\prime})(m) \leq \croch{ 2 (\eta - 1) + \frac{L_{\hypBg}}{\paren{\ln(n)}^{1/4}} }\perte{\ERM_m} .\]
as soon as $n \geq L_{\hypBg}$ (this restriction is necessary because the bounds are in terms of $\perte{\ERM_m}$ instead of $\perte{\bayes_m} + \E\croch{p_2}$).
Combined with \eqref{RP.eq.oracle.pr.1}, this gives: if $n \geq L_{\hypBg}$
\begin{equation} \label{RP.eq.oracle.pr.2}
\perte{\ERM_{\mh}} \un_{\mh \in \widetilde{\M}_n} \leq \croch{ 2 \eta - 1 + \frac{L_{\hypBg}}{\paren{\ln(n)}^{1/4}} } \inf_{m \in \widetilde{\M}_n} \set{ \perte{\ERM_m} } .
\end{equation}

\paragraph{2. $\mh$ has an ``intermediate'' dimension}
The penalized empirical criterion $\crit(m)=P_n\gamma\paren{\ERM_m} + \pen(m)$ has the same minimizers as
\[ \crit^{\prime\prime}(m) = \perte{\ERM_m} + \pen(m) - \penid^{\prime}(m) = \perte{\bayes_m} + \pen(m) - p_2(m) + \delc(m) \]
 over $\Mh_n$.

According to \hypPrich, there exists $m_0 \in \M_n$ such that $\sqrt{n} \leq D_{m_0} \leq c_{\mathrm{rich}} \sqrt{n}$.
If $n \geq L_{\hypBg}$, $m_0 \in \widetilde{\M}_n$ so that (using \hypAp\ and the same inequalities as in the first part of the proof)
\begin{equation} \label{eq.penRP.critm0}
\crit^{\prime\prime}(m_0) \leq \perte{\bayes_{m_0}}
+ \absj{\delc(m_0)} + \pen(m_0) \leq L_{\hypBg} \paren{n^{-\beta_2/2} + n^{-1/2}} . \end{equation}
Therefore, it remains to provide lower bounds on $\crit^{\prime\prime}(m)$ for $m \notin \widetilde{\M}_n$.

On the one hand, on $\Omega_{n,\gamma_0}$ if $D_m < \paren{\ln(n)}^{\gamma_1}$,
\begin{align} \notag
\crit^{\prime\prime}(m) &\geq \perte{\bayes_m} - \absj{\delc(m)} - p_2(m) \\
\label{eq.penRP.critm-pt} &\geq \cbiasmin \paren{\ln(n)}^{- \gamma_1 \beta_1} - L_{A,\gamma_0} \sqrt{\frac{\ln(n)}{n}} - L_{\hypBg} \frac{\paren{\ln(n)}^{1+\xi_{\ell}+\gamma_1}}{n} . \end{align}
On the other hand, if $D_m > c n \paren{\ln(n)}^{-1}$ and $\mMh_n$, by Lemma~\ref{RP.le.CWinf.minor}, $\El\croch{\pen(m) - p_2(m)} \geq \El\croch{p_2(m)} / 4$. Therefore, we have $\pen(m) - p_2(m) \geq (1 - L_{\hypBg} n^{-1/4}) \E\croch{p_2(m)}$ on $\Omega_{n,\gamma_0}$, so that
\begin{equation} \label{eq.penRP.critm-grd} \crit^{\prime\prime}(m) \geq \pen(m) - p_2(m) - \absj{\delc(m)} \geq L_{\hypBg} \paren{\ln(n)}^{-1} \end{equation}
when $n \geq L_{\hypBg}$.
Comparing \eqref{eq.penRP.critm0}, \eqref{eq.penRP.critm-pt} and \eqref{eq.penRP.critm-grd}, it follows that any minimizer $\mh$ of $\crit$ over $\Mh_n$ belongs to $\widetilde{\M}_n$ on $\Omega_{n,\gamma_0}$, provided that $n \geq L_{\hypBg}$.

\paragraph{3. the oracle has an ``intermediate'' dimension}
It remains to prove that the infimum can be extended to $\M_n$ on the right-hand side of \eqref{RP.eq.oracle.pr.2}, with the convention $\perte{\ERM_m} = + \infty$ if $A_n(m) = 0$.
Using similar arguments as above (as well as the definition of $\Omega_{n,\gamma_0}$, in particular \eqref{RP.eq.conc.p1.min.2} for large models), we have $\perte{\ERM_{m_0}} \leq L_{\hypBg} \paren{n^{-\beta_2/2} + n^{-1/2}}$ on $\Omega_{n,\gamma_0}$.
Moreover, for every $m \notin \widetilde{\M}_n$, either $D_m < \paren{\ln(n)}^{\gamma_1}$ and $\perte{\ERM_m} \geq \perte{\bayes_m} \geq L_{\hypBg} \paren{\ln(n)}^{-\gamma_1 \beta_1}$
or $D_m > c n \paren{\ln(n)}^{-1}$ and $\perte{\ERM_m} \geq p_1(m) \geq L_{\hypBg} \paren{\ln(n)}^{-2}$ on $\Omega_{n,\gamma_0}$ by \eqref{RP.eq.conc.p1.min.2} as soon as $n \geq L_{\hypBg}$.
Hence, if $n \geq L_{\hypBg}$, $m \notin \widetilde{\M}_n$ cannot contribute to the infimum in the right-hand side of \eqref{RP.eq.oracle.pr.2}. This concludes the proof of
\eqref{RP.eq:oracle_non-as_x_traj} in the bounded case. \qed

\subsubsection{Unbounded case}

The proof of the bounded case has to be slightly modified. In the definition of $\Omega_{n,\gamma_0}$, the concentration inequalities of Lemma~\ref{RP.le.conc.delta.borne} are replaced by those of Lemma~\ref{RP.le.conc.delc.nonborne}.
Then, $\gamma_1$ has to be chosen such that $\gamma_1 \geq 2 \xi_{g \epsilon} + 3$. The rest of the proof of \eqref{RP.eq.oracle.pr.2} is unchanged.

In order to prove that $\mh \in \widetilde{\M}_n$, \eqref{eq.penRP.critm-pt} has to be slightly changed because of the use of \eqref{RP.eq.conc.delc.nonborne.2} instead of \eqref{RP.eq.conc.delta.borne.2} to bound $\delc(m)$.
The final part of the proof is then modified similarly. \qed

\subsubsection[Proof of Remark~8]{Proof of Remark~\ref{RP.rem.oracle.traj.sansAp}} \label{sec.proof.rem.oracle.traj.sansAp}
We now prove the assertion made in Remark~\ref{RP.rem.oracle.traj.sansAp} below Lemma~\ref{RP.le.gal}.
Starting from \eqref{RP.eq.oracle.pr.2}, we can prove in the same way that $D_{\mh} \leq c n \paren{\ln(n)}^{-1}$, but $D_{\mh} < \paren{\ln(n)}^{\gamma_1}$ cannot be excluded.

Let $m \in \Mh_n$ such that $D_m < \paren{\ln(n)}^{\gamma_1}$.
Assume first that
\begin{equation} \label{eq.Th1.sansAp.min.biais} \perte{\bayes_m} \geq
\frac{2\eta - 1 + \varepsilon_n}{1 - \paren{\ln(n)}^{-1}} \inf_{m \in \widetilde{\M}_n} \set{\perte{\ERM_m}}
 + \frac{\paren{\ln(n)}^{\xi_{\ell}+\gamma_1 + 2}}{\parenb{1 - \paren{\ln(n)}^{-1}} n} , \end{equation}
where $\varepsilon_n \leq L_{\hypBg}\paren{\ln(n)}^{-1/4}$ comes from \eqref{RP.eq.oracle_traj_non-as}.
Then, on $\Omega_{n,\gamma_0}$, using \eqref{RP.eq.conc.delta.borne.2} with $\eta = \paren{\ln(n)}^{-1}$ and \eqref{eq.Th1.sansAp.min.biais},
\begin{align} \notag
\crit^{\prime\prime}(m) &\geq \perte{\bayes_m} - \absj{\delc(m)} - p_2(m)
\\ \notag &\geq (2\eta - 1 + \varepsilon_n) \inf_{m \in \widetilde{\M}_n} \set{\perte{\ERM_m}} + \frac{\paren{\ln(n)}^{\xi_{\ell}+\gamma_1+1} \paren{\ln(n) - L_{\hypBg}}}{n}
\\ &\geq (2\eta - 1 + \varepsilon_n) \inf_{m \in \widetilde{\M}_n} \set{\perte{\ERM_m}} + \frac{\paren{\ln(n)}^{\xi_{\ell}+\gamma_1+2}}{2n} ,
\label{eq.penRP.critm-pt.sansAp} \end{align}
provided that $n \geq L_{\hypBg}$.
In addition, let $ m_0 \in \arg\min_{m^{\prime} \in \widetilde{\M}_n} \set{\perte{\ERM_{m^{\prime}}} } $.
Since $m_0 \in \widetilde{\M}_n$, on $\Omega_{n,\gamma_0}$,
\[ \crit^{\prime\prime}(m_0) = \perte{\ERM_{m_0}} + \pen(m_0) - \penid^{\prime}(m_0) \leq \paren{2 \eta - 1 + \varepsilon_n } \perte{\ERM_{m_0}} , \]
and this upper bound is smaller than the lower bound in \eqref{eq.penRP.critm-pt.sansAp}.

Hence, on $\Omega_{n,\gamma_0}$, if $D_{\mh} < \paren{\ln(n)}^{\gamma_1}$ \eqref{eq.Th1.sansAp.min.biais} cannot be satisfied with $m = \mh$.
Moreover, by \eqref{RP.eq.conc.p1.maj}, for every $\mM_n$ such that $D_m \leq c n \paren{\ln(n)}^{-1}$
\[ \punmin(m) \leq L_{\hypBg} \paren{\ln(n)}^{\xi_{\ell} + 2} \frac{D_m}{n}\] on $\Omega_{n,\gamma_0}$.
Therefore,
\begin{align} \notag
\perte{\ERM_{\mh}} &= \perte{\bayes_{\mh}} + \punmin(\mh) \\
&\leq \frac{2\eta - 1 + \varepsilon_n}{1 - \paren{\ln(n)}^{-1}} \inf_{m \in \widetilde{\M}_n} \set{\perte{\ERM_m}} + L_{\hypBg} \frac{\paren{\ln(n)}^{\xi_{\ell}+\gamma_1 + 2}}{n} \notag \\
&\leq \parenb{2\eta - 1 + \paren{\ln(n)}^{-1/5}}\inf_{m \in \widetilde{\M}_n} \set{\perte{\ERM_m}} + \frac{\paren{\ln(n)}^{\xi_{\ell}+\gamma_1 + 3}}{n}
 \label{eq.Th1.sansAp.mh-pt} \end{align}
assuming that $n \geq L_{\hypBg}$.

When $D_{\mh} \geq \paren{\ln(n)}^{\gamma_1}$, \eqref{RP.eq:oracle_non-as_x_traj} holds on $\Omega_{n,\gamma_0}$ which implies \eqref{eq.Th1.sansAp.mh-pt}.
Hence, \eqref{eq.Th1.sansAp.mh-pt} holds on $\Omega_{n,\gamma_0}$.

Finally, with the same arguments as in Section~\ref{RP.sec.proof.le.gal.bounded}, the infimum on the right-hand side of \eqref{eq.Th1.sansAp.mh-pt} can be extended to the set of $\mM_n$ such that $D_m \geq \paren{\ln(n)}^{\gamma_1}$, with the convention $\perte{\ERM_m} = + \infty$ if $A_n(m) = 0$.
Enlarging the constant $K_1$ to remove the condition $n \geq L_{\hypBg}$, \eqref{RP.eq:oracle_non-as_x_traj.sansAp} is proved to hold with $\gamma_2 = \gamma_1 + \xi_{\ell} + 3$.
The proof is quite similar in the unbounded case.
\qed

\subsection{Expectations} \label{sec.proof.expect}
\begin{proof}[Proof of Proposition~\ref{RP.pro.EpenRP-Epenid}]
On the one hand, \eqref{RP.eq.Em_penid} and \eqref{RP.eq.Ep2.Epenid} are consequences of \eqref{VFCV.eq.p1.hist} and \eqref{VFCV.eq.p2.hist}; note that \eqref{RP.eq.Ep2.Epenid} holds whatever the convention taken for $p_1$ and $p_2$ in Section~\ref{RP.sec.proof.nota}.

On the other hand, \eqref{RP.eq.Em_pen} follows from Lemma~\ref{VFCV.le.cal.pen.Wech} below which is slighlty more geenral since $W$ is allowed to depend on $\paren{\un_{X_i \in \Il}}_{(i,\lambda)}$.
\end{proof}

\begin{lemma}\label{VFCV.le.cal.pen.Wech}
Let $S_m$ be the model of histograms adapted to some partition $\paren{\Il}_{\lamm}$ of $\X$, $W \in [0;\infty)^n$ be a random vector such that for every $\lamm$, $(W_i)_{X_i \in \Il}$ is exchangeable and independent of $(X_i,Y_i)_{X_i \in \Il}$. Let $\pen(m)$ be defined by \eqref{RP.def.pen.his} and assume $\min_{\lamm}\set{n\phl} \geq 1$.
%
Then,
\begin{equation}
\label{VFCV.eq.pen.Wech}
 \pen(m) = \frac{C}{n} \sum_{\lamm} \paren{R_{1,W}(n,\phl) + R_{2,W}(n,\phl)} \frac{n\phl S_{\lambda,2} - S_{\lambda,1}^2 }{n\phl (n\phl - 1)} \un_{n \phl \geq 2} , \end{equation}
where $R_{1,W}$ and $R_{2,W}$ are defined by \eqref{RP.def.R1} and \eqref{RP.def.R2}, that is
\begin{align*}
R_{1,W}(n,\phl) &\egaldef \E\croch{ \frac{(W_1 - \Wl)^2}{\Wl^2} \sachant X_1 \in \Il, \Wl >0 } \\
\mbox{and} \quad R_{2,W}(n,\phl) &\egaldef \E\croch{ \frac{(W_1 - \Wl)^2}{\Wl} \sachant X_1 \in \Il} .
\end{align*}
\end{lemma}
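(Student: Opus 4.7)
The plan is a direct algebraic computation, exploiting the exchangeability assumption on $(W_i)_{X_i \in \Il}$ together with the two identities
\[
\sum_{X_i \in \Il} (W_i - \Wl) = 0, \qquad \sum_{X_i \in \Il} (Y_i - \bethl) = 0,
\]
which respectively follow from the definition of $\Wl$ and of $\bethl$.

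First, I would rewrite the centered estimator locally on each bin. Using the two identities above,
\[
\bethlW - \bethl
= \frac{1}{n\phl \Wl} \sum_{X_i \in \Il} (W_i - \Wl)\, Y_i
= \frac{1}{n\phl \Wl} \sum_{X_i \in \Il} (W_i - \Wl)(Y_i - \bethl),
\]
with the convention that this is zero when $\Wl = 0$. Setting $a_i \egaldef W_i - \Wl$ and $c_i \egaldef Y_i - \bethl$ for $X_i \in \Il$, the two pieces of $\pen(m)$ then read
\[
\phl (\bethlW - \bethl)^2 = \frac{1}{n^2 \phl \Wl^2} \Bigl(\sum_{X_i\in\Il} a_i c_i\Bigr)^2,
\qquad
\phlW (\bethlW - \bethl)^2 = \frac{\un_{\Wl>0}}{n^2 \phl \Wl} \Bigl(\sum_{X_i\in\Il} a_i c_i\Bigr)^2.
\]

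Second, I would expand the square and average over $W$ using exchangeability. By exchangeability of $(W_i)_{X_i \in \Il}$ (preserved under conditioning on the symmetric event $\{\Wl>0\}$), for any function $g$ of $\Wl$,
\[
\Es\crochb{g(\Wl) a_i^2} = A_g \enspace (\forall i \text{ with } X_i\in\Il), \qquad
\Es\crochb{g(\Wl) a_i a_j} = B_g \enspace (\forall i\neq j \text{ with } X_i,X_j\in\Il).
\]
Applying this to $g = \Wl^{-2} \un_{\Wl>0}$ and $g = \Wl^{-1} \un_{\Wl>0}$ identifies $A_g$ with $\phl^{-1} R_{1,W}(n,\phl)\, \Prob(\Wl>0)$ and $R_{2,W}(n,\phl)$ respectively (after absorbing the conditioning). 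The identity $\sum_{X_i\in\Il} a_i = 0$ gives $n\phl A_g + n\phl(n\phl - 1) B_g = 0$, hence $B_g = -A_g/(n\phl - 1)$, so that
\[
\Es\biggl[g(\Wl) \Bigl(\sum_{X_i\in\Il} a_i c_i\Bigr)^2\biggr]
= A_g \sum_{X_i\in\Il} c_i^2 \;-\; \frac{A_g}{n\phl-1}\sum_{i\neq j} c_i c_j
= A_g \cdot \frac{n\phl}{n\phl - 1} \sum_{X_i\in\Il} c_i^2,
\]
using $\sum_{i\neq j} c_i c_j = -\sum_i c_i^2$, which comes from $\sum_{X_i\in\Il} c_i = 0$.

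Third, I would translate $\sum_{X_i\in\Il} c_i^2$ into $S_{\lambda,1},S_{\lambda,2}$. Since $\bethl = \betl + S_{\lambda,1}/(n\phl)$,
\[
\sum_{X_i\in\Il} (Y_i - \bethl)^2 \;=\; S_{\lambda,2} - \frac{S_{\lambda,1}^2}{n\phl} \;=\; \frac{n\phl S_{\lambda,2} - S_{\lambda,1}^2}{n\phl},
\]
and this vanishes when $n\phl = 1$, justifying the indicator $\un_{n\phl\geq 2}$ in \eqref{VFCV.eq.pen.Wech}. Plugging the two resulting expressions into $C\sum_{\lamm}\paren{\ldots + \ldots}$ and collecting the $\phl$-factors yields exactly \eqref{VFCV.eq.pen.Wech}.

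The only real point requiring care is the handling of the convention $\phlW (\bethlW - \bethl)^2 = 0$ on $\{\Wl = 0\}$ and the conditioning on $\{\Wl > 0\}$ in the first term; both are smoothed out by the indicator identities above, so there is no genuine obstacle — the argument is essentially a bookkeeping exercise around the two vanishing sums and the exchangeability-driven ``diagonal vs.\ off-diagonal'' decomposition.
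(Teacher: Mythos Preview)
Your proposal is correct and follows essentially the same route as the paper: expand $(\bethlW-\bethl)^2$ as a double sum over indices in $\Il$, use exchangeability of $(W_i)_{X_i\in\Il}$ to reduce to a diagonal and an off-diagonal coefficient, and use $\sum_{X_i\in\Il}(W_i-\Wl)=0$ to express the off-diagonal term as $-1/(n\phl-1)$ times the diagonal one. The only cosmetic difference is that the paper centers the $Y_i$ at $\betl$ (so the final quadratic in the data is directly $S_{\lambda,2}$ and $S_{\lambda,1}$), whereas you center at $\bethl$ and then convert $\sum_{X_i\in\Il}(Y_i-\bethl)^2$ back to $S_{\lambda,2}-S_{\lambda,1}^2/(n\phl)$; both lead to the same expression. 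One small slip in your write-up: when you say $A_g$ equals ``$\phl^{-1}R_{1,W}(n,\phl)\,\Prob(\Wl>0)$'' the factor $\phl^{-1}$ should not be there---for $g=\Wl^{-2}$ under conditioning on $\{\Wl>0\}$ one has simply $A_g=R_{1,W}(n,\phl)$, and the prefactor $1/(n^2\phl)$ comes from $\phl/(n\phl)^2$ in front of the square, exactly as in your displayed formulas.
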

\begin{proof}[Proof of Lemma~\ref{VFCV.le.cal.pen.Wech}]
First, as $\penid(m)$ was split into $p_1(m)$ and $p_2(m)$ (plus a centered term), the resampling penalty (without the constant $C$) is split into two terms:
\begin{align}
\label{RP.def.ph1}
\ph_1(m) &= \sum_{\lamm} \Es \croch{ \phl \paren{\bethlW - \bethl}^2 \sachant \Wl >0} \\[3pt]
\label{RP.def.ph2}
\ph_2(m) &= \sum_{\lamm} \Es \croch{ \phlW \paren{\bethlW - \bethl}^2 } .
\end{align}

A key quantity to compute is the following: for every $\lamm$ and $\Wl>0$,
\begin{align} \notag
& \quad \Es \croch{\phl \carre{\bethlW - \bethl} \sachant \Wl}
\\[3pt] \notag
&= \Es \croch{ \phl \carre{ \frac{1}{n\phl}
\sum_{X_i \in \Il} (Y_i - \betl) \left(1 - \frac{W_i}{\Wl} \right) } \sachant \Wl } \\ \label{RP.eq.calc.ph.int1}
&= \frac{1}{n^2 \phl} 
\sum_{X_i \in \Il} \carre{Y_i - \betl} \Es \croch{ \carre{ 1 -
\frac{W_i}{\Wl} } \sachant \Wl}
\\[3pt]
\notag & \quad + \frac{1}{n^2 \phl} \mathop{\sum_{i \neq j}}_{X_i
\in \Il, X_j \in \Il} (Y_i - \betl) (Y_j - \betl) \Es \croch{
\left( 1 - \frac{W_i}{\Wl} \right) \left( 1 -
\frac{W_j}{\Wl} \right) \sachant \Wl} 
.
\end{align}
Since the weights are exchangeable, $(W_i)_{X_i \in \Il}$ is also exchangeable conditionally on $\Wl$ and $(X_i)_{1 \leq i \leq n}$. Hence, the ``variance'' term
\[ R_V(n,n\phl,\Wl,\loi(W)) \egaldef \Es \croch{ \parenb{W_i - \Wl}^2 \sachant \Wl } \] does not depend on $i$ (provided that $X_i \in \Il$) and the ``covariance'' term
\[ R_C(n,n\phl,\Wl,\loi(W)) \egaldef \Es \croch{ \parenb{W_i - \Wl}\parenb{W_j - \Wl} \sachant \Wl } \] does not depend on $(i,j)$ (provided that $i \neq j$ and $X_i, X_j \in \Il$). Moreover,
\begin{align*} 0 &= \Es \croch{ \paren{ \sum_{X_i \in \Il} \parenb{W_i - \Wl} }^2 \sachant \Wl} \\
&= n \phl R_V(n,n\phl,\Wl,\loi(W)) + n \phl \paren{ n \phl - 1} R_C(n,n\phl,\Wl,\loi(W))
\end{align*}
so that if $n \phl \geq 2$,
\begin{equation} \label{RP.eq.calc.ph.int2} R_C(n,n\phl,\Wl,W) = \frac{-1}{n\phl -1} R_V(n,n\phl,\Wl,\loi(W)) \end{equation}
and $R_V(n,1,\Wl,\loi(W)) = 0$.
Then, \eqref{RP.eq.calc.ph.int1} and \eqref{RP.eq.calc.ph.int2} imply
\begin{align} \label{RP.eq.calc.ph.int3}
\Es \croch{\phl \carre{\bethlW - \bethl} \sachant \Wl}
&= \frac{R_V(n,n\phl,\Wl,\loi(W)) }{\Wl n^2 \phl} \un_{n \phl \geq 2} \\ \notag
&\quad \times \croch{ \frac{n\phl}{n\phl - 1} S_{\lambda,2} - \frac{1}{n\phl -
1} S_{\lambda,1}^2 }
\end{align}
Finally, \eqref{VFCV.eq.pen.Wech} follows from the combination of \eqref{RP.def.ph1} and \eqref{RP.def.ph2} with \eqref{RP.eq.calc.ph.int3}.
\end{proof}

\subsection{Resampling constants} \label{RP.sec.calc.reech}
Some results relative to the exchangeable weights introduced in Section~\ref{sec.cadreRP.heur} are proved in this subsection.
First, Lemma~\ref{RP.le.calc.R1.R2} below provides explicit formulas for $R_{1,W}(n,\phl)$ and $R_{2,W}(n,\phl)$
which appear in the explicit formula \eqref{VFCV.eq.pen.Wech} for the resampling penalty.
\begin{lemma} \label{RP.le.calc.R1.R2}
Let $n \in \N$ and $\phl \in (0,1]$ such that $n \phl \in \{1, \ldots, n\}$. Then, for every $\mefr \in \N \backslash \{0\}$, $p \in (0;1]$, $\mu >0$ and $q \in\set{1,\ldots,n}$,
\begin{align}
\label{RP.eq.calc.R1.R2.Efr}
R_{1,\mathrm{Efr}(\mefr)} &= \frac{n}{\mefr} \einv{\mathcal{B}(\mefr,\phl)} \paren{1 - \frac{1}{n\phl}} &\quad
R_{2,\mathrm{Efr}(\mefr)} &= \frac{n}{\mefr} \paren{1 - \frac{1}{n\phl}}
\\
\label{RP.eq.calc.R1.R2.Rad}
R_{1,\mathrm{Rad}(p)} &= \frac{1}{p}\einv{\mathcal{B}(n\phl,p)} -1 &\quad R_{2,\mathrm{Rad}(p)} &= \frac{1}{p} -1
\\
\label{RP.eq.calc.R1.R2.Poi}
R_{1,\mathrm{Poi}(\mu)} &= \frac{1}{\mu} \einv{\mathcal{P}(n\phl\mu)} \paren{1 - \frac{1}{n \phl} } &\quad
R_{2,\mathrm{Poi}(\mu)} &= \frac{1}{\mu} \paren{1 - \frac{1}{n \phl} }
\\
\label{RP.eq.calc.R1.R2.Rho}
R_{1,\mathrm{Rho}(q)} &= \frac{n}{q}\einv{\mathcal{H}(n,n\phl,q)} -1 &\quad R_{2,\mathrm{Rho}(q)} &= \frac{n}{q} -1
\\ \notag
R_{1,\mathrm{Loo}} &= \frac{n \phl}{n (n\phl -1)} \un_{n\phl \geq 2} &\quad R_{2,\mathrm{Loo}} &= \frac{1}{n-1}
\end{align}
where $\mathcal{B}$, $\mathcal{P}$ and $\mathcal{H}$ denote respectively the Binomial, Poisson and Hypergeometric distributions and $\einv{\mu}=\E\croch{Z} \E\croch{Z^{-1} \sachant Z>0}$ with $Z \sim \mu$.
\end{lemma}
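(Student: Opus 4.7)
The plan is to treat each of the five weight distributions separately, all starting from a common reduction. By the identity $\sum_{X_i \in \Il}(W_i - \Wl)^2 = \sum_{X_i \in \Il} W_i^2 - k\Wl^2$ (with $k = n\phl$) and exchangeability of $(W_i)_{X_i \in \Il}$, which gives $\E[W_1 \sachant S_W] = \Wl$ with $S_W \egaldef k \Wl$, I would first rewrite
\begin{equation*}
R_{1,W}(n,\phl) = \E\croch{\,\mathrm{Var}(W_1 \sachant S_W)\big/\Wl^2 \sachant \Wl > 0\,}, \qquad R_{2,W}(n,\phl) = \E\croch{\,\mathrm{Var}(W_1 \sachant S_W)\big/\Wl\,},
\end{equation*}
with the convention that the integrand for $R_{2,W}$ vanishes on $\{\Wl = 0\}$. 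Everything then reduces to computing a single conditional variance and then taking a simple expectation against the law of $\Wl$.

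For each weight, the conditional law of $(W_i)_{X_i \in \Il}$ given $S_W$ is explicit. For Rademacher$(p)$, the identity $W_i^2 = W_i/p$ collapses $\sum W_i^2 = k\Wl/p$, so $\mathrm{Var}(W_1 \sachant S_W) = \Wl/p - \Wl^2$; since $kp\Wl \sim \mathcal{B}(k,p)$, $\einv{\mathcal{B}(n\phl,p)}$ appears through $\E[1/\Wl \sachant \Wl > 0]$. For Poisson$(\mu)$, conditional on $M = \mu S_W \sim \mathcal{P}(k\mu)$ the counts $(\mu W_i)_{X_i \in \Il}$ are multinomial on $\Lambda$, giving $\mathrm{Var}(W_1 \sachant S_W) = (k-1)\Wl/(k\mu)$; then $\einv{\mathcal{P}(n\phl \mu)}$ emerges from the same $\E[1/\Wl \sachant \Wl > 0]$ computation. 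For Efron$(\mefr)$ the same multinomial-conditional-on-total trick applies, but in two steps: $(\mefr/n) S_W \sim \mathcal{B}(\mefr, \phl)$, and conditional on that sum the block counts are multinomial, producing $\mathrm{Var}(W_1 \sachant S_W) = (n/\mefr)(k-1) \Wl/k$ and $\einv{\mathcal{B}(\mefr,\phl)}$. The Random hold-out$(q)$ case proceeds via $Z = |I \cap \Lambda| \sim \mathcal{H}(n,k,q)$: conditional on $Z$, $W_1$ is $(n/q)$-times a Bernoulli$(Z/k)$, whence $\einv{\mathcal{H}(n,k,q)}$ appears through $\E[1/Z \sachant Z > 0]$. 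Leave-one-out is obtained by direct enumeration of the three elementary events for the deleted index (equal to $1$, in $\Lambda \setminus \{1\}$, or outside $\Lambda$).

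The $R_{2,W}$ formulas are then simpler still: the quotient $\mathrm{Var}(W_1 \sachant S_W)/\Wl$ turns out to be either a constant (Efron, Poisson, Loo) or an affine function of $\Wl$ (Rademacher, Rho) on $\{\Wl > 0\}$, so the expectation reduces to $\E[\Wl] = 1$ and $P(\Wl > 0)$. The only genuinely delicate point, and the main obstacle in polishing the argument, is the consistent bookkeeping of indicator conventions near $\Wl = 0$: this is exactly what produces the $\un_{n\phl \geq 2}$ factor in the Loo formula for $R_{1,W}$ and what allows the closed forms of Table~\ref{RP.TableR2} for $R_{2,W}$ to be read off cleanly from the two-step conditioning. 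The Efron case is also the computationally heaviest, because the evaluation of $\E[1/\Wl \sachant \Wl > 0]$ uses both layers (the binomial law of $(\mefr/n)S_W$ and the multinomial split inside $\Lambda$) and the non-trivial identity $\E[\Wl] = 1$ to recognize $\einv{\mathcal{B}(\mefr,\phl)}$.
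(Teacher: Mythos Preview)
Your proposal is correct and follows essentially the same route as the paper: both reduce $R_{1,W}$ and $R_{2,W}$ to the conditional variance $R_V(\Wl)=\E[(W_1-\Wl)^2\mid\Wl]$ via exchangeability (your $\mathrm{Var}(W_1\mid S_W)$) and then compute $R_V$ case by case, using the multinomial conditional structure for Efron and Poisson and the binary $\{0,\kappa\}$ structure for Rad and Rho. The only cosmetic differences are that the paper groups Rad and Rho under a single ``exchangeable subsampling weight'' formula $R_V(\Wl)=\Wl(\kappa-\Wl)$ and derives the Loo expressions as the special case Rho$(n-1)$ of \eqref{RP.eq.calc.R1.R2.Rho} rather than by direct enumeration.
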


\begin{proof}[Proof of Lemma \ref{RP.le.calc.R1.R2}]
Since $W$ is independent of the data, the observations with $X_i \in \Il$ can be assumed to be the $n \phl$ first ones: $(X_1,Y_1), \ldots, (X_{n\phl},Y_{n\phl})$. The random vector $(W_i)_{1 \leq i \leq n\phl}$ is then exchangeable (since $W$ is exchangeable). Hence, by definition of $\Wl=(n\phl)^{-1} \sum_{i=1}^{n\phl} W_i$,
\begin{equation} \label{RP.eq.E[Wi|Wl]}
\forall i \in \{1, \ldots, n \phl \}, \quad \Es \croch{ W_i \sachant \Wl } = \Wl .
\end{equation}

Then, the quantity \[ R_V(n,n\phl,\Wl,\loi(W)) = R_V(\Wl) = \E\croch{\parenb{W_i - \Wl}^2 \sachant \Wl} \] appearing both in $R_{1,W}$ and $R_{2,W}$ is the variance of the weight $W_i$ conditionally on $\Wl$.

\paragraph{Exchangeable subsampling weights}
A {\em subsampling weight} is defined as any resampling weight $W$ such that $W_i \in \{0, \kappa\}$ a.s. for every $i$. Such weights can be written $W_i = \kappa \un_{i \in I}$ for some random $I \subset \{1, \ldots, n\}$. Rad and Rho are the two main examples of such weights and they are both exchangeable.
This kind of weights are called ``bootstrap without replacement weights'' in \cite[Example~3.6.14]{vdV_Wel:1996}.
First, when $W$ is an exchangeable subsampling weight, \eqref{RP.eq.E[Wi|Wl]} implies
\[ \Wl = \Es \croch{ W_i \sachant \Wl} = \kappa \Prob\paren{W_i = \kappa \sachant \Wl}\]
so that
\[ \loi \paren{ W_i \sachant \Wl } = \kappa \mathcal{B}(\kappa^{-1} \Wl) \qquad \mbox{and} \qquad R_V(\Wl) = \Wl (\kappa - \Wl) .\]

Then, this result is applied to Rad with $\kappa = p^{-1}$ and
$\loi(\Wl) = (n\phl p)^{-1} \times \mathcal{B}(n\phl,p)$ which proves
\eqref{RP.eq.calc.R1.R2.Rad}. In the Rho case, $\kappa = (n/q)$ and
$\loi(\Wl) = (q \phl)^{-1}\* \mathcal{H} \paren{n, n\phl,q}$ so that
\eqref{RP.eq.calc.R1.R2.Rho} follows. The Loo is a particular case of
Rho (with $q=n-1$) and $\einv{\mathcal{H}(n,n\phl,n-1)}$ can be
computed with \eqref{RP.eq.einv.Loo} in
Lemma~\ref{RP.le.einv.hypergeom}.

\paragraph{Efron}
Efron weights can also be written
\begin{equation} \label{RP.eq.Efron.Uj} W_i = \frac{n}{\mefr} \card \set{ 1 \leq j \leq \mefr \telque U_j = i } \end{equation}
 with $(U_j)_{1 \leq j \leq \mefr}$ a sequence of independent random variables with uniform distribution over $\{1, \ldots, n\}$. Therefore,
\[ \loi(\Wl) = (\mefr \phl)^{-1} \mathcal{B}(\mefr,\phl) \quad \mbox{and} \quad \loi \paren{ W_i \sachant \Wl } = \frac{n}{\mefr} \mathcal{B} \paren{ \mefr \phl \Wl, \frac{1}{n\phl} } \]
so that \[ R_V(\Wl) = \frac{n}{\mefr} \Wl \paren{ 1 - \frac{1}{n\phl}} \] and \eqref{RP.eq.calc.R1.R2.Efr} follows.

\paragraph{Poisson}
One can check that the weights defined by \eqref{RP.eq.Efron.Uj} with $\mefr=N_n \sim \mathcal{P}(\mu n)$ independent of the $(U_j)_{j \geq 1}$, are actually Poisson ($\mu$) weights; this is the classical poissonization trick \cite[Chapter~3.5]{vdV_Wel:1996}.
Moreover, conditionally on $\Wl$ and $N_n = \mefr$, the same reasoning as for Efron($\mefr$) (with a multiplicative constant $\mu^{-1}$ instead of $n/\mefr$) leads to
\eqref{RP.eq.calc.R1.R2.Poi}.
\end{proof}

\begin{proof}[Proof of Proposition~\ref{RP.pro.comp.Epen.Ep2}]
From \eqref{VFCV.eq.pen.Wech}, \eqref{RP.eq.comp.Epen.Ep2} holds with
\[ \delta_{n,\phl}^{(\mathrm{penW})} = \CWinf \paren{ R_{1,W}(n,\phl) + R_{2,W}(n,\phl) } - 2 . \]
Combining Lemma~\ref{RP.le.calc.R1.R2} with Lemma~\ref{RP.le.einv.binom} (for Efr and Rad), Lemma~\ref{RP.le.einv.hypergeom} (for Rho and Loo) and Lemma~\ref{RP.le.einv.Poi} (for Poi), the following non-asymptotic bounds are obtained:
\begin{enumerate}
\item Efron ($\mnefr$): let $\kappa_1 = 5.1$ and $\kappa_2 = 3.2$, then
\begin{equation} \label{RP.eq.delta.penEfr}
\minipar{ \kappa_2 - 1} { \frac{\kappa_1}{ \paren{B n \phl}^{1/4} }} \geq \delta_{n,\phl}^{(\mathrm{penEfr}(\mnefr))} \geq \frac{-2}{n\phl} - e^{ - B n \phl} .\end{equation}
\item Rademacher ($p$):
\begin{gather} \label{RP.eq.delta.penRad}
\frac{2}{1-p} \croch{ \minipar{ \kappa_2 - 1} { \frac{\kappa_1}{ \paren{n p \phl}^{1/4} }} } \geq \delta_{n,\phl}^{(\mathrm{penRad}(p))} \geq \frac{ - 2 e^{ - p n \phl}} {1-p} \\
\minipar{ 1 + 3 \times 10^{-4} } { \frac{\kappa_1 \times 2^{1/4}}{ \paren{n \phl}^{1/4} }} \geq \delta_{n,\phl}^{(\mathrm{penRad}(1/2))} \geq - \un_{n \phl \leq 2} \label{RP.eq.delta.penRad2}
.\end{gather}
\item Poisson ($\mu$):
\begin{equation} \label{RP.eq.delta.penPoi}
\mini{ 1 } { \frac{2 \paren{1+e^{-3}} } { \paren{ \mu n \phl - 2}_+ }} \geq \delta_{n,\phl}^{(\mathrm{penPoi}(\mu))} \geq \frac{-2}{n\phl} - \paren{ \mini{ e^{ - \mu n \phl}} { \un_{\mu n \phl < 1.61} } } .\end{equation}
\item Random hold-out ($q_n$): on the one hand,
\[ \delta_{n,\phl}^{(\mathrm{penRho}(q_n))} = \frac{n}{n - q} \paren{ \einv{\mathcal{H}(n, n\phl, q_n)} - 1} \geq \frac{e^{-n \phl B_-}}{1 - B_+} , \]
where the lower bounds assume that $0 < B_- \leq q_n n^{-1} \leq B_+ < \infty$. On the other hand, under the same condition
\[ \delta_{n,\phl}^{(\mathrm{penRho}(q_n))} \leq \frac{L}{B_- (1- B_+)} \sqrt{ \frac {\ln(n\phl)}{n\phl} } \] provided that $n \phl \geq L_{B_-, B_+}$. When $q_n = \left\lfloor n/2 \right\rfloor$, this upper bound is combined with \eqref{RP.eq.einv.hypergeom.sup}.
\item Leave-one-out:
\begin{equation} \label{RP.eq.delta.penLoo}
\frac{ \un_{n\phl \geq 2} } {n \phl - 1} \geq \delta_{n,\phl}^{(\mathrm{penLoo})} \geq - \un_{n\phl = 1} .\end{equation}
\end{enumerate}
\end{proof}

\begin{proof}[Proof of Lemma~\ref{RP.le.CWinf.minor}]
Lemma~\ref{RP.le.CWinf.minor} is a byproduct of the proof of Proposition~\ref{RP.pro.comp.Epen.Ep2} (combined with Lemma~\ref{VFCV.le.calc.p2}).
\end{proof}

\vspace*{-2pt}
\subsection{Concentration inequalities} \label{RP.sec.proof.conc}
\vspace*{-3pt}

In this subsection, concentration inequalities are proved for the resampling penalty (Proposition~\ref{RP.pro.conc.penRP}) and for $\delc(m)$ with unbounded data (Lemma~\ref{RP.le.conc.delc.nonborne}).

\vspace*{-2pt}
\subsubsection[Proof of Proposition~3]{Proof of Proposition~\ref{RP.pro.conc.penRP}} \label{RP.sec.proof.pro.conc.penRP}
\vspace*{-3pt}

According to \eqref{VFCV.eq.pen.Wech}, $\pen(m)$ is a U-statistics of order 2 conditionally on $(\un_{X_i \in \Il})_{(i,\lambda)}$. Then, \cite[Lemma~5]{Arl:2008a} with
\begin{align*} a_{\lambda} &= \frac{R_{1,W}(n,\phl) + R_{2,W}(n,\phl)} {n (n\phl - 1)} \qquad b_{\lambda} = \frac{- \paren{ R_{1,W}(n,\phl) + R_{2,W}(n,\phl) }} {n^2 \phl (n\phl - 1)} ,
\end{align*}
implies that for every $q \geq 2$
\begin{equation*} 
\begin{split}
\norm{\pen(m) - \El [\pen(m)]}^{(\Lambda_m)}_q \leq L_{a_{\ell}, \xi_{\ell}} D_m^{-1/2} A_n^{-1/2} \qquad \qquad \\
\qquad \times \sup_{np \geq A_n} \set{ R_{1,W}(n,p) + R_{2,W}(n,p) } q^{\xi_{\ell} + 1} \E\croch{p_2(m)} .
\end{split}
\end{equation*}
Conditional concentration inequalities follow from the classical link between moments and concentration \cite[Lemma~8.10]{Arl:2007:phd}, with a probability bound $1 - n^{-\gamma}$. Since $1 - n^{-\gamma}$ is deterministic, this implies unconditional concentration inequalities.

The second statement follows from the proof of Proposition~\ref{RP.pro.comp.Epen.Ep2} where non-asymptotic upper bounds on
\[ 2 + \delta_{n,\phl}^{(\mathrm{penW})} = \CWinf \times \paren{R_{1,W}(n,\phl) + R_{2,W}(n,\phl)}  \]
can be found. \qed

\vspace*{-2pt}
\subsubsection[Proof of Lemma~12]{Proof of Lemma~\ref{RP.le.conc.delc.nonborne}}
\vspace*{-3pt}

From \cite[Lemma~8.18]{Arl:2007:phd} which is stated and proved in \cite{Arl:2008b:app},
\begin{align*} \norm{\delc(m)}_q &\leq \frac{2 \sqrt{\kappa} \sqrt{q}}{\sqrt{n}} \norm{F_m - \E[F_m]}_q \\
\text{with} \quad F_m &\egaldef (Y - \bayes_m(X))^2 - (Y- \bayes(X))^2 \\
&= (\bayes_m(X) - \bayes(X))^2 - 2 \epsilon \sigma(X) (\bayes_m(X) - \bayes(X)) .
\end{align*}
Note that $\epsilon \sigma(X) (\bayes_m(X) - \bayes(X))$ is centered conditionally on $X \in \Il$ for every $\lamm$. Hence,
\begin{gather}
\norm{\delc(m)}_q \leq \frac{2 \sqrt{\kappa} \sqrt{q}} {\sqrt{n}} \paren{ \norm{\bayes - \bayes_m}^2_{\infty} + 2 \sigmax \norm{\bayes - \bayes_m}_{\infty} \norm{\epsilon}_q } \label{RP.eq.mom.delc.nonborne}
.
\end{gather}

Using now assumptions \hypAgeps\ and \hypAdel, for every $q \geq 2$,
\begin{align*} \norm{\delc(m)}_q &\leq
2 \sqrt{\kappa} \sqrt{q} \left( (\cdelmglo)^2 \perte{\bayes_m} + 2 \cdelmglo \sqrt{\perte{\bayes_m}} P^{g \epsilon}(q) \sigmax \right) \frac{1}{\sqrt{n}} \\
&\leq L_{\cdelmglo} \sqrt{q} D_m^{-1/2} \perte{\bayes_m} + L_{ a_{g \epsilon}, \xi_{g \epsilon}, c_{\Delta,m}^{g} } q^{\xi_{g \epsilon} + 1/2 } \frac{\sigmax^2 \sqrt{D_m}} {n} .
\end{align*}
Taking $\theta = D_m^{-1/2}$, \eqref{RP.eq.conc.delc.nonborne} follows from the classical link between moments and concentration inequalities \cite[Lemma~8.10]{Arl:2007:phd}.
For the second statement, start back from \eqref{RP.eq.mom.delc.nonborne} and use that $\norm{\bayes - \bayes_m}_{\infty} \leq 2A$.
\qed


\subsection{Expectations of inverses} \label{RP.sec.app.einv.preuves}
This subsection is devoted to the proofs of the lemmas of Section~\ref{RP.sec.app.einv}. Note that \cite[Section~2 of the Technical appendix]{Arl:2008a} explains how to generalize \eqref{RP.eq.einv-binom.inf-sup} to a wide class of random variables.
Two useful results can be found in \cite[Technical appendix]{Arl:2008a}: first, the general lower bound
\begin{equation} \label{RP.eq.einv.minor.gal} \einv{Z} \geq \Prob(Z>0) , \end{equation}
comes from Jensen inequality. Second, defining
\begin{equation} \label{RP.def.einfz}
\einvz{\loi(Z)} \egaldef \E \croch{Z} \E \croch{ Z^{-1} \un_{Z>0}} = \einv{Z} \Prob(Z>0) ,
\end{equation}
the following upper bound holds as soon as $\Prob(c_Z > Z > 0)=0$:
\begin{align}
\forall \alpha>0, \quad
\einvz{Z} &= \E \croch{Z^{-1} \un_{\alpha \E[Z] > Z>0}} \E[Z] + \E \croch{Z^{-1} \un_{Z \geq \alpha \E[Z]}} \E[Z] \notag \\
&\leq \Prob \paren{\alpha \E[Z] > Z>0} \E[Z] c_Z^{-1} + \alpha^{-1} .
 \label{RP.eq.einv.major.gal}
\end{align}
%

\subsubsection[Binomial case (proof of (19) in Lemma~4)]{Binomial case (proof of \eqref{aRP.eq.einv.sym} in Lemma~\ref{RP.le.einv.binom})} \label{aRP.sec.einv.sym}
When $n \geq 9$, the upper bound follows from \eqref{RP.def.einfz} together with Lemma~4.1 of \cite{Gyo_etal:2002} (showing that $\einvz{\mathcal{B}(n,p)}\leq 2n/(n+1)$). When $n \leq 8$, $\einv{\mathcal{B}(n,1/2)}\leq 1.21$ (see for instance \cite[Section~8.7]{Arl:2007:phd}).
For the lower bound, the crucial point is that $Z \sim \mathcal{B}\paren {n, \frac{1}{2} } $ is nonnegative and symmetric, that is, $\loi(Z) = \loi(n-Z)$.
Using only this property and defining $p_0 = \Prob(Z = 0) = \Prob(Z = n) = 2^{-n}$, we
have\looseness=1
\begin{align} \notag
 \einv{Z} &=
\frac{\P \paren{ Z = n \sachant Z>0 } }{2} + \E \croch{ \frac{1}{Z}
\sachant 0 < Z < 2 } \frac{n}{2} \frac { \P(0<Z<n) } {\P( Z >0)} \\
\notag
&= \frac{p_0}{2(1-p_0)} + \frac{1-2p_0}{1-p_0} \frac{n}{2} \E \croch{
\frac{1}{2} \left( \frac{1}{Z} + \frac{1}{n-Z} \right)
\sachant 0 < Z < n } \\
\label{aRP.eq.einv.sym.gal}
&= \frac{p_0}{2(1-p_0)} + \frac{1-2p_0}{1-p_0} \paren{ 1 + \frac{n}{2} \E
\croch{ \frac{\carre{Z - \frac{n}{2}}}{Z (n-Z)} \sachant 0 < Z < n
} } .
\end{align}
Since $Z$ is binomial with parameters $(n,1/2)$
\begin{align*} \frac{n(1-2p_0)}{2} \E \croch{ \frac{\carre{Z - \frac{n}{2}}}{Z (n-Z)}
\sachant 0 < Z < n } &\geq \Prob \left( Z = 1 \text{ or } Z = n-1 \right) \frac {(n-2)^2} {4 (n-1)}
\end{align*}
if $n \geq 3$.
Putting this into \eqref{aRP.eq.einv.sym.gal},
 we obtain:
\begin{align*}
\einv{\mathcal{B}\paren{n,\frac{1}{2}}} &\geq \frac {1}
{1- 2^{-n}} \left( 2^{-n-1} + 1 - 2^{1-n} + \frac{n
(n-2)^2}{2^{n+1} (n-1) } \right) \geq 1 . \qed
\end{align*}

\subsubsection[Hypergeometric case (proof of Lemma~5)]{Hypergeometric case (proof of Lemma~\ref{RP.le.einv.hypergeom})} \label{aRP.sec.einv.hypergeom}
Let $Z \sim \mathcal{H}(n,r,q)$. It has an expectation $\E \left[ Z \right] = (qr)/n$.
\paragraph{General lower bound}
It follows from \eqref{RP.eq.einv.minor.gal}, \[ \Prob \left( Z = 0 \right) \leq \left( 1 - \frac{r}{n} \right)^{q} \leq \exp \paren{-\frac{qr}{n}} \]
and the fact that if $r \geq n-q+1$, $\P(Z>0)=1$.
\paragraph{A general upper bound}
According to \eqref{RP.def.einfz} and the lower bound for $\Prob(Z>0)$ above,
an upper bound on $\einv{\mathcal{H}(n,r,q)}$ can be derived from an upper bound on $\einvz{\mathcal{H}(n,r,q)}$.
Recall the following concentration result by Hush and Scovel \cite{2005:Hus_Sco}: for every $x \geq 2$,
\begin{equation*} \begin{split}
&\Prob \left( \E(Z) - Z > x \right) \\
&< \exp \left( -2 (x-1)^2 \left[
\maxipar{\frac{1}{r+1} + \frac{1}{n-r+1}} {\frac{1}{q+1} +
\frac{1}{n-q+1}}\right] \right) .
\end{split} 
\end{equation*}
Combined with the above concentration inequality,
\eqref{RP.eq.einv.major.gal} with $c_Z = 1$, $\E[Z]=qr n^{-1}$
and $\alpha = 1 - \frac{n \beta}{q}$ for any $\frac{q}{n} > \beta \geq \frac{2}{r} $
yields
\begin{align*}
\einvz{\mathcal{H}(n,r,q)}
&\leq \frac{qr}{n} \exp \left[ -
\frac{2 (\beta r - 1)^2 }{r+1} \right] + \frac{1}{1 - \frac{n
\beta}{q}} .
\end{align*}
Therefore,
\begin{equation}
\einv{\mathcal{H}(n,r,q)}
\leq
\frac{ \inf_{\frac{q}{n} > \beta \geq \frac{2}{r}} \Bigl\{
\frac{qr}{n} \exp \bigl[ - \frac{2 (\beta r - 1)^2 }{r+1} \bigr]
+ \frac{1}{1 - \frac{n \beta}{q}} \Bigr\} } { 1 - \exp \left( - \frac{qr}{n} \right)}
\label{RP.eq.einv.hypergeom}
\end{equation}
holds for every $n \geq r,q \geq 1$.
\paragraph{End of the proof of \eqref{RP.eq:einv_hypergeom:maj_non-asympt}}
With the additional conditions on $n$, $r$ and $q$, $\beta$ can be taken equal to $\frac{1+\sqrt{\frac{3}{4}
\ln(r)(r+1)}}{r}$ in \eqref{RP.eq.einv.hypergeom} so that
\begin{align*}
\einvz{\mathcal{H}(n,r,q)} &\leq \frac{1}{2\sqrt{r}} + \frac {1}
{1 - \frac{n}{q} \biggl( \frac{1+\sqrt{\frac{3}{4} \ln(r)(r+1)}}{r} \biggr)
} \leq 1 + \frac{n}{q} K(\epsilon) \sqrt{\frac{\ln(r)}{r}} \\
\mbox{with} \quad
K(\epsilon) &= \frac{1}{2 \sqrt{\ln(2)}} + \frac{1}{\epsilon^2} \left( \sqrt{\frac{\ln(3)}{3}} + \frac{3}{4}\right) .\end{align*}
Using \eqref{RP.def.einfz} and the upper bound on $\Prob \left( Z = 0 \right)$, \eqref{RP.eq:einv_hypergeom:maj_non-asympt} follows since $r \geq 2$ and
\begin{equation*}
\kappa_3(\epsilon) = 0.9 + 1.4 \times \epsilon^{-2} \geq 1.02 \times K(\epsilon) + 0.03 . \end{equation*}
%
\paragraph{``Rho'' case} Assume now that
$q=\lfloor \frac{n}{2} \rfloor$ so that $\frac{n}{q} =
2 + \frac{1}{\lfloor \frac{n}{2} \rfloor} \leq 3$ and tends to
2 when $n$ tends to infinity.

For $r \geq 6$, $\beta = \frac{2}{r}$ in
\eqref{RP.eq.einv.hypergeom} yields
\[ \einv{\mathcal{H}(n,6,q)} \leq 9.68 \qquad \einv{\mathcal{H}(n,7,q)} \leq
7.61 \qquad \einv{\mathcal{H}(n,8,q)} \leq 7.46 \qquad
\einv{\mathcal{H}(n,9,q)} \leq 7.32 \]

For $r \geq 10$, $\beta = \frac{1}{4} + \frac{1}{r}$ in
\eqref{RP.eq.einv.hypergeom} yields
\[ \sup_{r \geq 10} \einv{\mathcal{H}(n,r,q)} \leq 7.49
\qquad \sup_{r \geq 26} \einv{\mathcal{H}(n,r,q)} \leq 3 .\]
\paragraph{Small values of $r$} must be treated appart. For
$r=1$, it is easy to compute $\einv{\mathcal{H}(n,1,q)} = q n^{-1} \leq 1$.
When $n=r$, we have $\einv{\mathcal{H}(n,n,q)} = 1$. Otherwise, using the fact that
for every $n \geq r+1$, $\frac{n!}{(n-r)!} \geq
\frac{(r+1)!}{(r+1)^r} n^r$,
\[ \einvz{\mathcal{H}(n,r,q)} \leq \frac{r}{R} \frac{(r+1)^r}{(r+1)! R^r}
\left( \sum_{k=1}^r \binom{r}{k} \frac{(R-1)^{r-k}}{k} \right) \]
with $R = \frac{n}{q} \in [1; +\infty)$.
For $r=2$, this upper bound is lower than $1.6$. If $\frac{n}{q} \leq 3$ (which holds in the ``Rho'' case),
\[ \einv{\mathcal{H}(n,3,q)} \leq 4.67 \qquad \einv{\mathcal{H}(n,4,q)} \leq
8.15 \qquad \einv{\mathcal{H}(n,5,q)} \leq 14.29 . \]
\paragraph{``Loo'' case}
Assume now $q=n-1$. On the one hand, if $r=1$, the conditioning makes $Z$ deterministic and equal to 1 so that \[ \einv{\mathcal{H}(n,1,n-1)} = \E[Z] = 1 - \frac{1}{n} . \]

On the other hand, if $r \geq 2$, $Z>0$ holds a.s. since it only take two values:
\[ \Prob\paren{ Z = r-1 } = \frac{r}{n} \quad \mbox{and} \quad \Prob\paren{ Z = r } = \frac{n-r}{n} . \]
Hence,
\[ \einv{\mathcal{H}(n,r,n-1)} = \frac{(n-1)r}{n} \paren{ \frac{r}{(r-1)n} + \frac{n-r}{n r} } = 1 + \frac{1}{n} \paren{ \frac{(n-1)r}{n(r-1)} - 1} . \] The lower bound is straightforward since $n \geq r$.
\paragraph{``Lpo'' case}
As noticed in Lemma~\ref{RP.le.calc.R1.R2},
\[ \forall r \geq p+1, \qquad \einv{\mathcal{H}(n,r,n-p)} \geq 1 . \]

Moreover, when $r \geq p+1$ the support of $\mathcal{H}(n,r,n-p)$ is $\{r-p, \ldots, r\}$ and
\begin{align*}
\einv{\mathcal{H}(n,r,n-p)} &= \frac{(n-p)r}{n} \sum_{j=r-p}^r \frac{\binom{r}{j} \binom{n-r}{n-p-j}}
{j \binom{n}{n-p}} \\
&= \frac{(n-p)r}{n} \sum_{k=\maxi{(p+r-n)}{0}}^{p} \frac{\binom{r}{k} \binom{n-r}{p-k}} {(r-k) \binom{n}{p}} . \end{align*}

More precisely, the $k$-th term of the sum is equal to
\begin{align*}
\frac{(n-p)r}{n} \frac{\binom{r}{k} \binom{n-r}{p-k}} {(r-k) \binom{n}{p}} 
\leq \paren{\frac{r}{n}}^k \paren{1-\frac{r}{n}}^{p-k} \binom{p}{k} \frac{r}{r-p} \frac{n^p} {n \cdots (n-p+1)} ,
\end{align*}
so that
\begin{align*}
\einv{\mathcal{H}(n,r,n-p)} &\leq
\frac{r n^p} {(r-p) n \cdots (n-p+1)} .
\end{align*}
The result follows. \qed

\begin{remark}[Asymptotics]
If for some $\alpha>0$, $q_k r_k^{1/2-\alpha}
n_k^{-1} \xrightarrow[k\rightarrow + \infty]{} +\infty$ and $n_k \geq r_k \rightarrow + \infty$,
then $\einv{\mathcal{H}(n_k,r_k,q_k)} \rightarrow 1$ when $k \rightarrow \infty$. The upper bound
is obtained by taking
\[\beta = \frac{1+\sqrt{(r_k+1) \ln \bigl( \frac{q_k r_k}{n_k} \bigr)
}} {r_k}\] in \eqref{RP.eq.einv.hypergeom}, which is possible for $k$
sufficiently large. The lower bound is straightforward.
\end{remark}

\subsubsection[Poisson case (proof of Lemma~6)]{Poisson case (proof of Lemma~\ref{RP.le.einv.Poi})} \label{aRP.sec.einv.poisson}
Let $Z \sim \mathcal{P}(\mu)$ and define $g: [0;\infty) \mapsto \R$ by $g(0)=0$ and for every $\mu>0$
 \[ g(\mu) \egaldef \einv{\mathcal{P}(\mu)} = \mu \E \croch{ Z^{-1} \sachant Z >0 }= \frac{\mu
e^{-\mu}} {1 - e^{-\mu}} \sum_{k=1}^{+\infty}
\frac{\mu^k} {k \times k!} = \frac{\mu}{e^{\mu}-1}
\int_0^{\mu} \frac{e^x - 1}{x} dx .\]
The function $g$ is continuous at 0 and has a first derivative $g^{\prime}(0)=1$. For every $x \geq 0$, define \[ h(x) = \frac
{e^x - 1}{x} \qquad H(x) = \int_0^x h(t) dt \qquad
a(x)= \frac{h^{\prime}(x)}{h(x)} = 1 - \frac{e^x - 1 - x}{x(e^x - 1)} . \] where the last equality holds if $x>0$ and $a(0)=1/2$. Then, $g(u) = H(u)/h(u)$ satisfies the following ordinary differential equation:
\[ g(0)=0 \qquad \forall u \geq 0, \quad g^{\prime}(u) = 1 - a(u) g(u) .\]
Since \[ \forall u \geq 0, \quad \frac{1}{2} \leq a(u) \leq 1 \qquad \mbox{and} \qquad
\lim_{u \rightarrow + \infty} a(u) = 1 , \] $g$ satisfies a differential inequation
\[ 1 - \frac{g}{2} \leq g^{\prime} \leq 1 - g \qquad g(0)=0 .\]
Then, for every $x \geq x_0 \geq 0$,
\begin{equation} \label{RP.eq.einv.Poi.int1} 2 \left[ 1 - e^{2(x_0 - x)} \left(1 -\frac{g(x_0)}{2} \right) \right] \geq g(x) \geq 1 + (g(x_0) - 1) e^{x_0 - x} .\end{equation}

\subparagraph{Lower bound}
The general lower bound \eqref{RP.eq.einv.minor.gal} gives \[ g(\mu) \geq \P(Z > 0) = 1 - e^{-\mu} ,\]
which can be improved. Indeed, if $g(x_0) \geq 1$, \eqref{RP.eq.einv.Poi.int1} shows that $g(x) \geq 1$ for every $x \geq x_0$. Since $g=H/h$ and for every $u \geq 0$,
\[ H(u) \geq u + \frac{u^2}{4} + \frac {u^3}{18} , \mbox{ it follows that} \quad
 g(u) \geq \frac{u \bigl(u + \frac{u^2}{4} + \frac {u^3}{18} \bigr)}{e^u - 1} .\] Then, $g(1.61) \geq 1$, so that $g(x) \geq 1$ for every $x \geq 1.61$.

\subparagraph{Upper bound} Using \eqref{RP.eq.einv.Poi.int1} with $x_0 = 0$
gives \[ \forall x \geq 0, \quad g(x) \leq 2 - 2 e^{-2x} \leq 2.\]
Moreover, for every $\epsilon\in(0;1)$, $1 - \epsilon \leq a(x) \leq 1$ as soon as $x \geq \epsilon^{-1}$. Then, on $[\epsilon^{-1}; \infty)$, $g$ satisfies the differential inequation \[ g^{\prime} \geq 1 - (1-\epsilon) g . \]
Integrating this between $\epsilon^{-1}$ and $2 \epsilon^{-1}$,
\begin{equation*}
g(2 \epsilon^{-1}) \leq \frac{1}{1-\epsilon} \croch{ 1 + \paren{g(\epsilon^{-1}) (1-\epsilon) -1 } \exp \paren{- \epsilon^{-1} (1-\epsilon)^{-1} } } . \end{equation*}
For every $x > 2$, $\epsilon = 2 x^{-1} \in (0;1)$ so that
\begin{align*}
g(x) &\leq 1 + \frac {2 + (x-4) \exp \parenb{ - \frac {x^2} {2(x-2)}} } {x-2} 
\leq 1 + \frac{2(1+e^{-3})}{x-2} .
\end{align*}
The result follows. \qed

\section*{Acknowledgments}
The author would like to thank gratefully Pascal Massart for several fruitful discussions.
The author also acknowledges several suggestions from the anonymous referees that greatly improved the paper.

\bibliographystyle{plain}

\end{document}